\newcommand{\nc}{\newcommand}
\nc{\CC}{{\mathbb{C}}}
\nc{\DD}{{\mathbb{D}}}
\nc{\LL}{{\mathbf{L}}}
\nc{\RR}{{\mathbf{R}}}
\renewcommand{\P}{{\mathbb{P}}}
\nc{\OO}{{\mathbb{O}}}
\nc{\QQ}{{\mathbb{Q}}}
\nc{\ZZ}{{\mathbb{Z}}}
\nc{\Z}{{\mathbb{Z}}}
\nc{\cA}{{\mathcal{A}}}
\nc{\cB}{{\mathcal{B}}}
\nc{\cC}{{\mathcal{C}}}
\nc{\cD}{{\mathcal{D}}}
\nc{\cE}{{\mathcal{E}}}
\nc{\cF}{{\mathcal{F}}}
\nc{\cG}{{\mathcal{G}}}
\nc{\cH}{{\mathcal{H}}}
\nc{\cI}{{\mathcal{I}}}
\nc{\cJ}{{\mathcal{J}}}
\nc{\cK}{{\mathcal{K}}}
\nc{\cL}{{\mathcal{L}}}
\nc{\cM}{{\mathcal{M}}}
\nc{\cN}{{\mathcal{N}}}
\nc{\cO}{{\mathcal{O}}}
\nc{\cP}{{\mathcal{P}}}
\nc{\cQ}{{\mathcal{Q}}}
\nc{\cR}{{\mathcal{R}}}
\nc{\cS}{{\mathcal{S}}}
\nc{\cT}{{\mathcal{T}}}
\nc{\cU}{{\mathcal{U}}}
\nc{\cV}{{\mathcal{V}}}
\nc{\cW}{{\mathcal{W}}}
\nc{\cX}{{\mathcal{X}}}
\nc{\cY}{{\mathcal{Y}}}
\nc{\cZ}{{\mathcal{Z}}}
\nc{\rc}{{\mathrm{c}}}
\nc{\rf}{{\mathrm{f}}}
\nc{\rh}{{\mathrm{h}}}
\nc{\rch}{{\mathrm{ch}}}
\nc{\rtd}{{\mathrm{td}}}
\nc{\rA}{{\mathrm{A}}}
\nc{\rB}{{\mathrm{B}}}
\nc{\rC}{{\mathrm{C}}}
\nc{\rD}{{\mathrm{D}}}
\nc{\rE}{{\mathrm{E}}}
\nc{\rF}{{\mathrm{F}}}
\nc{\rG}{{\mathrm{G}}}
\nc{\rH}{{\mathrm{H}}}
\nc{\rI}{{\mathrm{I}}}
\nc{\rJ}{{\mathrm{J}}}
\nc{\rK}{{\mathrm{K}}}
\nc{\rL}{{\mathrm{L}}}
\nc{\rM}{{\mathrm{M}}}
\nc{\rN}{{\mathrm{N}}}
\nc{\rO}{{\mathrm{O}}}
\nc{\rP}{{\mathrm{P}}}
\nc{\rQ}{{\mathrm{Q}}}
\nc{\rR}{{\mathrm{R}}}
\nc{\rS}{{\mathrm{S}}}
\nc{\rT}{{\mathrm{T}}}
\nc{\rU}{{\mathrm{U}}}
\nc{\rV}{{\mathrm{V}}}
\nc{\rW}{{\mathrm{W}}}
\nc{\rX}{{\mathrm{X}}}
\nc{\rY}{{\mathrm{Y}}}
\nc{\rZ}{{\mathrm{Z}}}
\nc{\bA}{{\mathbf{A}}}
\nc{\bB}{{\mathbf{B}}}
\nc{\bC}{{\mathbf{C}}}
\nc{\bD}{{\mathbf{D}}}
\nc{\bE}{{\mathbf{E}}}
\nc{\bF}{{\mathbf{F}}}
\nc{\bG}{{\mathbf{G}}}
\nc{\bH}{{\mathbf{H}}}
\nc{\bI}{{\mathbf{I}}}
\nc{\bJ}{{\mathbf{J}}}
\nc{\bK}{{\mathbf{K}}}
\nc{\bL}{{\mathbf{L}}}
\nc{\bM}{{\mathbf{M}}}
\nc{\bN}{{\mathbf{N}}}
\nc{\bO}{{\mathbf{O}}}
\nc{\bP}{{\mathbf{P}}}
\nc{\bQ}{{\mathbf{Q}}}
\nc{\bR}{{\mathbf{R}}}
\nc{\bS}{{\mathbf{S}}}
\nc{\bT}{{\mathbf{T}}}
\nc{\bU}{{\mathbf{U}}}
\nc{\bV}{{\mathbf{V}}}
\nc{\bW}{{\mathbf{W}}}
\nc{\bX}{{\mathbf{X}}}
\nc{\bY}{{\mathbf{Y}}}
\nc{\bZ}{{\mathbf{Z}}}
\nc{\ba}{{\mathbf{a}}}
\nc{\bb}{{\mathbf{b}}}
\nc{\bc}{{\mathbf{c}}}
\nc{\bd}{{\mathbf{d}}}
\nc{\be}{{\mathbf{e}}}
\nc{\bg}{{\mathbf{g}}}
\nc{\bh}{{\mathbf{h}}}
\nc{\bi}{{\mathbf{i}}}
\nc{\bj}{{\mathbf{j}}}
\nc{\bk}{{\mathbf{k}}}
\nc{\bl}{{\mathbf{l}}}
\nc{\bm}{{\mathbf{m}}}
\nc{\bn}{{\mathbf{n}}}
\nc{\bo}{{\mathbf{o}}}
\nc{\bp}{{\mathbf{p}}}
\nc{\bq}{{\mathbf{q}}}
\nc{\br}{{\mathbf{r}}}
\nc{\bs}{{\mathbf{s}}}
\nc{\bt}{{\mathbf{t}}}
\nc{\bu}{{\mathbf{u}}}
\nc{\bv}{{\mathbf{v}}}
\nc{\bw}{{\mathbf{w}}}
\nc{\bx}{{\mathbf{x}}}
\nc{\by}{{\mathbf{y}}}
\nc{\bz}{{\mathbf{z}}}
\nc{\fA}{{\mathfrak{A}}}
\nc{\fB}{{\mathfrak{B}}}
\nc{\fC}{{\mathfrak{C}}}
\nc{\fD}{{\mathfrak{D}}}
\nc{\fE}{{\mathfrak{E}}}
\nc{\fF}{{\mathfrak{F}}}
\nc{\fG}{{\mathfrak{G}}}
\nc{\fH}{{\mathfrak{H}}}
\nc{\fI}{{\mathfrak{I}}}
\nc{\fJ}{{\mathfrak{J}}}
\nc{\fK}{{\mathfrak{K}}}
\nc{\fL}{{\mathfrak{L}}}
\nc{\fM}{{\mathfrak{M}}}
\nc{\fN}{{\mathfrak{N}}}
\nc{\fO}{{\mathfrak{O}}}
\nc{\fP}{{\mathfrak{P}}}
\nc{\fQ}{{\mathfrak{Q}}}
\nc{\fR}{{\mathfrak{R}}}
\nc{\fS}{{\mathfrak{S}}}
\nc{\fT}{{\mathfrak{T}}}
\nc{\fU}{{\mathfrak{U}}}
\nc{\fV}{{\mathfrak{V}}}
\nc{\fW}{{\mathfrak{W}}}
\nc{\fX}{{\mathfrak{X}}}
\nc{\fY}{{\mathfrak{Y}}}
\nc{\fZ}{{\mathfrak{Z}}}
\nc{\fa}{{\mathfrak{a}}}
\nc{\fb}{{\mathfrak{b}}}
\nc{\fc}{{\mathfrak{c}}}
\nc{\fd}{{\mathfrak{d}}}
\nc{\fe}{{\mathfrak{e}}}
\nc{\ff}{{\mathfrak{f}}}
\nc{\fg}{{\mathfrak{g}}}
\nc{\fh}{{\mathfrak{h}}}
\nc{\fj}{{\mathfrak{j}}}
\nc{\fk}{{\mathfrak{k}}}
\nc{\fl}{{\mathfrak{l}}}
\nc{\fm}{{\mathfrak{m}}}
\nc{\fn}{{\mathfrak{n}}}
\nc{\fo}{{\mathfrak{o}}}
\nc{\fp}{{\mathfrak{p}}}
\nc{\fq}{{\mathfrak{q}}}
\nc{\fr}{{\mathfrak{r}}}
\nc{\fs}{{\mathfrak{s}}}
\nc{\ft}{{\mathfrak{t}}}
\nc{\fu}{{\mathfrak{u}}}
\nc{\fv}{{\mathfrak{v}}}
\nc{\fw}{{\mathfrak{w}}}
\nc{\fx}{{\mathfrak{x}}}
\nc{\fy}{{\mathfrak{y}}}
\nc{\fz}{{\mathfrak{z}}}
\nc{\sA}{{\mathsf{A}}}
\nc{\sB}{{\mathsf{B}}}
\nc{\sC}{{\mathsf{C}}}
\nc{\sD}{{\mathsf{D}}}
\nc{\sE}{{\mathsf{E}}}
\nc{\sF}{{\mathsf{F}}}
\nc{\sG}{{\mathsf{G}}}
\nc{\sH}{{\mathsf{H}}}
\nc{\sI}{{\mathsf{I}}}
\nc{\sJ}{{\mathsf{J}}}
\nc{\sK}{{\mathsf{K}}}
\nc{\sL}{{\mathsf{L}}}
\nc{\sM}{{\mathsf{M}}}
\nc{\sN}{{\mathsf{N}}}
\nc{\sO}{{\mathsf{O}}}
\nc{\sP}{{\mathsf{P}}}
\nc{\sQ}{{\mathsf{Q}}}
\nc{\sR}{{\mathsf{R}}}
\nc{\sS}{{\mathsf{S}}}
\nc{\sT}{{\mathsf{T}}}
\nc{\sU}{{\mathsf{U}}}
\nc{\sV}{{\mathsf{V}}}
\nc{\sW}{{\mathsf{W}}}
\nc{\sX}{{\mathsf{X}}}
\nc{\sY}{{\mathsf{Y}}}
\nc{\sZ}{{\mathsf{Z}}}
\nc{\sa}{{\mathsf{a}}}
\nc{\sd}{{\mathsf{d}}}
\nc{\se}{{\mathsf{e}}}
\nc{\sg}{{\mathsf{g}}}
\nc{\sh}{{\mathsf{h}}}
\nc{\si}{{\mathsf{i}}}
\nc{\sj}{{\mathsf{j}}}
\nc{\sk}{{\mathsf{k}}}
\nc{\sn}{{\mathsf{n}}}
\nc{\so}{{\mathsf{o}}}
\nc{\sq}{{\mathsf{q}}}
\nc{\sr}{{\mathsf{r}}}
\nc{\st}{{\mathsf{t}}}
\nc{\su}{{\mathsf{u}}}
\nc{\sv}{{\mathsf{v}}}
\nc{\sw}{{\mathsf{w}}}
\nc{\sx}{{\mathsf{x}}}
\nc{\sy}{{\mathsf{y}}}
\nc{\sz}{{\mathsf{z}}}
\nc{\oA}{{\overline{A}}}
\nc{\oB}{{\overline{B}}}
\nc{\oC}{{\overline{C}}}
\nc{\oD}{{\overline{D}}}
\nc{\oE}{{\overline{E}}}
\nc{\oF}{{\overline{F}}}
\nc{\oG}{{\overline{G}}}
\nc{\oH}{{\overline{H}}}
\nc{\oI}{{\overline{I}}}
\nc{\oJ}{{\overline{J}}}
\nc{\oK}{{\overline{K}}}
\nc{\oL}{{\overline{L}}}
\nc{\oM}{{\overline{M}}}
\nc{\oN}{{\overline{N}}}
\nc{\oO}{{\overline{O}}}
\nc{\oP}{{\overline{P}}}
\nc{\oQ}{{\overline{Q}}}
\nc{\oR}{{\overline{R}}}
\nc{\oS}{{\overline{S}}}
\nc{\oT}{{\overline{T}}}
\nc{\oU}{{\overline{U}}}
\nc{\oV}{{\overline{V}}}
\nc{\oW}{{\overline{W}}}
\nc{\oX}{{\overline{X}}}
\nc{\oY}{{\overline{Y}}}
\nc{\oZ}{{\overline{Z}}}
\nc{\oa}{{\overline{a}}}
\nc{\ob}{{\overline{b}}}
\nc{\oc}{{\overline{c}}}
\nc{\od}{{\overline{d}}}
\nc{\of}{{\overline{f}}}
\nc{\og}{{\overline{g}}}
\nc{\oh}{{\overline{h}}}
\nc{\oi}{{\overline{i}}}
\nc{\oj}{{\overline{j}}}
\nc{\ok}{{\overline{k}}}
\nc{\ol}{{\overline{l}}}
\nc{\om}{{\overline{m}}}
\nc{\on}{{\overline{n}}}
\nc{\oo}{{\overline{o}}}
\nc{\op}{{\overline{p}}}
\nc{\oq}{{\overline{q}}}
\nc{\os}{{\overline{s}}}
\nc{\ot}{{\overline{t}}}
\nc{\ou}{{\overline{u}}}
\nc{\ov}{{\overline{v}}}
\nc{\ow}{{\overline{w}}}
\nc{\ox}{{\overline{x}}}
\nc{\oy}{{\overline{y}}}
\nc{\oz}{{\overline{z}}}
\nc{\tA}{{\tilde{A}}}
\nc{\tB}{{\tilde{B}}}
\nc{\tC}{{\tilde{C}}}
\nc{\tD}{{\tilde{D}}}
\nc{\tE}{{\tilde{E}}}
\nc{\tF}{{\tilde{F}}}
\nc{\tG}{{\tilde{G}}}
\nc{\tH}{{\tilde{H}}}
\nc{\tI}{{\tilde{I}}}
\nc{\tJ}{{\tilde{J}}}
\nc{\tK}{{\tilde{K}}}
\nc{\tL}{{\tilde{L}}}
\nc{\tM}{{\tilde{M}}}
\nc{\tN}{{\tilde{N}}}
\nc{\tO}{{\tilde{O}}}
\nc{\tP}{{\tilde{P}}}
\nc{\tQ}{{\tilde{Q}}}
\nc{\tR}{{\tilde{R}}}
\nc{\tS}{{\tilde{S}}}
\nc{\tT}{{\tilde{T}}}
\nc{\tU}{{\tilde{U}}}
\nc{\tV}{{\tilde{V}}}
\nc{\tW}{{\tilde{W}}}
\nc{\tX}{{\tilde{X}}}
\nc{\tY}{{\tilde{Y}}}
\nc{\tZ}{{\tilde{Z}}}
\nc{\tfD}{{\tilde{\fD}}}
\nc{\tcA}{{\tilde{\cA}}}
\nc{\tcC}{{\tilde{\cC}}}
\nc{\tcD}{{\tilde{\cD}}}
\nc{\tcE}{{\tilde{\cE}}}
\nc{\tcF}{{\tilde{\cF}}}
\nc{\tcM}{{\tilde{\cM}}}
\nc{\tcT}{{\tilde{\cT}}}
\nc{\ta}{{\tilde{a}}}
\nc{\tb}{{\tilde{b}}}
\nc{\tc}{{\tilde{c}}}
\nc{\td}{{\tilde{d}}}
\nc{\te}{{\tilde{e}}}
\nc{\tf}{{\tilde{f}}}
\nc{\tg}{{\tilde{g}}}
\nc{\ti}{{\tilde{\imath}}}
\nc{\tj}{{\tilde{j}}}
\nc{\tk}{{\tilde{k}}}
\nc{\tl}{{\tilde{l}}}
\nc{\tm}{{\tilde{m}}}
\nc{\tn}{{\tilde{n}}}
\nc{\tp}{{\tilde{p}}}
\nc{\tq}{{\tilde{q}}}
\nc{\ts}{{\tilde{s}}}
\nc{\tu}{{\tilde{u}}}
\nc{\tv}{{\tilde{v}}}
\nc{\tw}{{\tilde{w}}}
\nc{\tx}{{\tilde{x}}}
\nc{\ty}{{\tilde{y}}}
\nc{\tz}{{\tilde{z}}}
\nc{\hA}{{\hat{A}}}
\nc{\hB}{{\hat{B}}}
\nc{\hC}{{\hat{C}}}
\nc{\hD}{{\hat{D}}}
\nc{\hE}{{\hat{E}}}
\nc{\hF}{{\hat{F}}}
\nc{\hG}{{\hat{G}}}
\nc{\hH}{{\hat{H}}}
\nc{\hI}{{\hat{I}}}
\nc{\hJ}{{\hat{J}}}
\nc{\hK}{{\hat{K}}}
\nc{\hL}{{\hat{L}}}
\nc{\hM}{{\hat{M}}}
\nc{\hN}{{\hat{N}}}
\nc{\hO}{{\hat{O}}}
\nc{\hP}{{\hat{P}}}
\nc{\hQ}{{\hat{Q}}}
\nc{\hR}{{\hat{R}}}
\nc{\hS}{{\widehat{S}}}
\nc{\hT}{{\hat{T}}}
\nc{\hU}{{\widehat{U}}}
\nc{\hV}{{\hat{V}}}
\nc{\hW}{{\hat{W}}}
\nc{\hX}{{\widehat{X}}}
\nc{\hY}{{\widehat{Y}}}
\nc{\hZ}{{\hat{Z}}}
\nc{\hcB}{{\widehat{\cB}}}
\nc{\ha}{{\hat{a}}}
\nc{\hb}{{\hat{b}}}
\nc{\hc}{{\hat{c}}}
\nc{\hd}{{\hat{d}}}
\nc{\he}{{\hat{e}}}
\nc{\hg}{{\hat{g}}}
\nc{\hh}{{\hat{h}}}
\nc{\hi}{{\hat{i}}}
\nc{\hj}{{\hat{j}}}
\nc{\hk}{{\hat{k}}}
\nc{\hl}{{\hat{l}}}
\nc{\hm}{{\hat{m}}}
\nc{\hn}{{\hat{n}}}
\nc{\ho}{{\hat{o}}}
\nc{\hp}{{\hat{p}}}
\nc{\hq}{{\hat{q}}}
\nc{\hr}{{\hat{r}}}
\nc{\hs}{{\hat{s}}}
\nc{\hu}{{\hat{u}}}
\nc{\hv}{{\hat{v}}}
\nc{\hw}{{\hat{w}}}
\nc{\hx}{{\hat{x}}}
\nc{\hy}{{\hat{y}}}
\nc{\hz}{{\hat{z}}}
\nc{\hcC}{{\widehat{\cC}}}
\nc{\hcT}{{\widehat{\cT}}}
\nc{\eps}{\varepsilon}
\nc{\lan}{\big\langle}
\nc{\ran}{\big\rangle}
\nc{\kk}{{\Bbbk}}
\nc{\io}{\upiota}
\nc{\Kr}{\mathsf{Kr}}
\nc{\cKr}{\mathcal{K}\!\mathit{r}}
\nc{\perf}{\mathrm{perf}}
\nc{\Dm}{\bD^{-}}
\nc{\Db}{\bD^{\mathrm{b}}}
\nc{\Dp}{\bD^{\mathrm{perf}}}
\nc{\Dperf}{\bD^{\mathrm{perf}}}
\nc{\Dqc}{\bD_{\mathrm{qc}}}
\nc{\Du}{\bD}
\nc{\Dsing}{\bD^{\mathrm{sg}}}
\nc{\Dg}{\bD^{\mathrm{sg}}}
\DeclareMathOperator{\tr}{\mathbf{tr}}
\nc{\Rn}{\rR_{\mathrm{node}}}
\nc{\Cn}{\cC_{\mathrm{node}}}
\def\bw#1#2{\textstyle{\bigwedge\hskip-0.9mm^{#1}}\hskip0.2mm{#2}}
\DeclareMathOperator{\Hom}{\mathrm{Hom}}
\DeclareMathOperator{\Ext}{\mathrm{Ext}}
\DeclareMathOperator{\cEnd}{\mathcal{E}\!\mathit{nd}}
\DeclareMathOperator{\RHom}{\mathrm{\mathbf{R}Hom}}
\DeclareMathOperator{\cRHom}{\mathbf{R}\mathcal{H}\mathit{om}}
\DeclareMathOperator{\Spec}{\mathrm{Spec}}
\DeclareMathOperator{\Coh}{\mathrm{Coh}}
\DeclareMathOperator{\Bl}{\mathrm{Bl}}
\DeclareMathOperator{\Pic}{\mathrm{Pic}}
\DeclareMathOperator{\Cliff}{\mathcal{C}\!\ell}
\DeclareMathOperator{\Sym}{\mathrm{Sym}}
\DeclareMathOperator{\Ker}{\mathrm{Ker}}
\DeclareMathOperator{\Coker}{\mathrm{Coker}}
\DeclareMathOperator{\Cone}{\mathrm{Cone}}
\DeclareMathOperator{\pr}{\mathrm{pr}}
\DeclareMathOperator{\Qu}{\mathsf{Qu}}
\DeclareMathOperator{\Gr}{\mathrm{Gr}}
\DeclareMathOperator{\g}{\mathrm{g}}
\DeclareMathOperator{\id}{\mathrm{id}}
\DeclareMathOperator{\rank}{\mathrm{rk}}
\DeclareMathOperator{\Br}{Br}
\def\Pinfty#1{\P^{\infty,{#1}}}
\newcommand{\typeo}[1]{$\mathbf{#1}$}
\newcommand{\typemm}[2]{\mbox{$({#1}\textrm{-}{#2})$}}
\theoremstyle{plain}
\newtheorem{theorem}{Theorem}[section]
\newtheorem{conjecture}[theorem]{Conjecture}
\newtheorem{lemma}[theorem]{Lemma}
\newtheorem{proposition}[theorem]{Proposition}
\newtheorem{corollary}[theorem]{Corollary}
\theoremstyle{definition}
\newtheorem{definition}[theorem]{Definition}
\newtheorem{example}[theorem]{Example}
\theoremstyle{remark}
\newtheorem{remark}[theorem]{Remark}
\newenvironment{renumerate}{\begin{enumerate}[label={\textup{(\roman*)}}]}{\end{enumerate}}
\newenvironment{alenumerate}{\begin{enumerate}[label={\textup{(\alph*)}}]}{\end{enumerate}}
\title[Spinor modifications of conic bundles and derived categories of Fano threefolds]%
{Spinor modifications of conic bundles and\\derived categories of 1-nodal Fano threefolds}
\author{Alexander Kuznetsov}
\address{{\sloppy
\parbox{0.9\textwidth}{
Algebraic Geometry Section, Steklov Mathematical Institute of Russian Academy of Sciences,\\
8 Gubkin str., Moscow 119991 Russia
\\[5pt]
Laboratory of Algebraic Geometry, National Research University Higher School of Economics, Russian Federation
}\bigskip}}
\email{akuznet@mi-ras.ru}
\date{}
\thanks{I was partially supported by the HSE University Basic Research Program.}
\begin{document}

\maketitle

\begin{abstract}
Given a flat conic bundle~$X/S$ and an \emph{abstract spinor bundle}~$\cF$ on~$X$ 
we define a new conic bundle~$X_\cF/S$, 
called a \emph{spinor modification} of~$X$, 
such that the even Clifford algebras of~$X/S$ and~$X_\cF/S$ are Morita equivalent
and the orthogonal complements of~$\Db(S)$ in~$\Db(X)$ and~$\Db(X_\cF)$ are equivalent as well.
We demonstrate how the technique of spinor modifications works
in the example of conic bundles associated with some nonfactorial 1-nodal prime Fano threefolds.
In particular, we construct a categorical absorption of singularities for these Fano threefolds.
\end{abstract}

%%%%%%%%%%%%%%%%%%%%%%%%%%%%%%%%%%%%%%%%%%%%%%%%%%%%%%%%%%%%%%%%%%%%%%%%%%%%%%%%%
%%%%%%%%%%%%%%%%%%%%%%%%%%%%%%%%%%%%%%%%%%%%%%%%%%%%%%%%%%%%%%%%%%%%%%%%%%%%%%%%%
%%%%%%%%%%%%%%%%%%%%%%%%%%%%%%%%%%%%%%%%%%%%%%%%%%%%%%%%%%%%%%%%%%%%%%%%%%%%%%%%%

\section{Introduction}

One of the most important types of Mori fiber spaces in the minimal model program are \emph{conic bundles},
i.e., projective dominant morphisms~$f \colon X \to S$ of relative dimension~$1$
with relatively ample anticanonical class.
Conic bundles are extremely useful for the study of birational geometry of~$X$.
For instance, in dimension~3, the intermediate Jacobian of~$X$ 
can be computed from the discriminant divisor~$\Delta_{X/S}$ of~$f$
and there are powerful non-rationality criteria for~$X$ in terms of~$\Delta_{X/S}$,
see~\cite{P18} for a recent survey.

On the other hand, if a conic bundle is flat 
(this is always the case when the total space of a conic bundle 
is a Gorenstein threefold, see~\cite[Theorem~7]{Cutkosky}),
one can also control the bounded derived category~$\Db(X)$ of~$X$;
it has a semiorthogonal decomposition with components~$\Db(S)$ and~$\Db(S, \Cliff_0(X/S))$,
where~$\Cliff_0(X/S)$ is the \emph{even Clifford algebra} of~$X/S$.
While the first component of this decomposition is quite familiar 
(e.g., if~$X$ is a smooth rationally connected threefold, $S$ is a smooth rational surface),
the second component is not so easy to understand, especially when~$\Cliff_0(X/S)$ has a complicated structure.
The goal of this paper is to develop tools for understanding this category
and demonstrate how these tools work in the geometrically interesting examples 
of conic bundles corresponding to nonfactorial 1-nodal prime Fano threefolds.

So, from now on we assume that~$f \colon X \to S$ is a flat conic bundle 
(of any dimension, not necessarily relatively minimal;
see Definition~\ref{def:cb} for the assumptions we impose).
Let~$q \colon \cL \to \Sym^2\cE^\vee$ be the corresponding quadratic form, 
where~$\cL$ is a line bundle and~$\cE$ is a vector bundle of rank~3 on~$S$,
so that~\mbox{$X \subset \P_S(\cE)$} is a divisor of relative degree~2 with equation given by~$q$.
Then
\begin{equation}
\label{eq:cl0}
\Cliff_0(X/S) = \Cliff_0(q) = \Cliff_0(\cE,\cL,q) \coloneqq \cO_S \oplus (\wedge^2\cE \otimes \cL),
\end{equation}
and the multiplication in~$\Cliff_0(q)$ is induced by wedge product and the form~$q$, 
see~\S\ref{ss:pca} and Example~\ref{ex:clifford} for explicit formulas.
The {\sf kernel category} of~$X$ is the subcategory in~$\Db(X)$ defined by
\begin{equation}
\label{eq:def-kernel}
\Ker(X/S) = \Ker(f_*) \coloneqq \{ \cG \in \Db(X) \mid f_*(\cG) = 0 \} \subset \Db(X).
\end{equation}
By~\cite[Theorem~4.2]{K08} there is a semiorthogonal decomposition~$\Db(X) = \langle \Ker(X/S), f^*(\Db(S)) \rangle$
and an equivalence of triangulated categories
\begin{equation}
\label{eq:ker-f}
\Ker(X/S) \simeq \Db(S, \Cliff_0(q)),
\end{equation}
where the right-hand side is the bounded derived category of coherent sheaves of modules over~$\Cliff_0(q)$.
As we explained above, our goal is to study this category.

\subsection{Spinor modifications}

The starting point of our approach to the study of~$\Ker(X/S)$ is a simple observation:
there are, in fact, many different flat conic bundles over~$S$ that have the same kernel category,
but different (though Morita equivalent) even Clifford algebras, 
and some of these Clifford algebras are easier to deal with.
The goal is, therefore, to take control over such conic bundles.

One construction that allows one to pass between conic bundles (or, more generally, quadric bundles) 
with equivalent kernel categories was developed in~\cite{K24}.
This is an iteration of two operations: the \emph{hyperbolic reduction}, 
that decreases the relative dimension of a quadric bundle,
and the \emph{hyperbolic extension}, that increases the relative dimension;
together, they generate an equivalence relation, called \emph{hyperbolic equivalence}.
Thus, to pass from one conic bundle to another, 
one needs to consider intermediate quadric bundles of higher dimension,
which, of course, is a disadvantage of this approach.

In this paper we suggest another way to find conic bundles with equivalent kernel category 
(and Morita equivalent Clifford algebra).
For this we introduce the following notion.

\begin{definition}
\label{def:abs}
A vector bundle~$\cF$ of rank~$2$ on a conic bundle~$f \colon X \to S$ is {\sf an abstract spinor bundle} if
\begin{enumerate}[label={\textup{(\roman*)}}]
\item 
\label{def:asb-ker}
$f_*(\cF) = 0$, and
\item 
\label{def:asb-c1}
$\rc_1(\cF) = K_{X/S}$ in~$\Pic(X/S)$,
\end{enumerate}
where~$\Pic(X/S) \coloneqq \Pic(X) / f^*\Pic(S)$ is the relative Picard group of~$X/S$.
\end{definition}

By~\cite{K08} a conic bundle~$X/S$ has a natural sequence of \emph{canonical spinor bundles}~$\cF^i_{X/S}$,
the images of the standard modules~$\Cliff_i(q)$ over~$\Cliff_0(q)$ under the equivalence~\eqref{eq:ker-f}
(see also~\eqref{eq:cf-io} or, for a down-to-earth description, Lemma~\ref{lem:cf0-cf1}),
but they do not exhaust all possibilities.
In fact, the main result of this paper is the following \emph{modification theorem}.

We denote by~$\cEnd^0(\cF) \subset \cEnd(\cF)$ the trace-free part of the endomorphism bundle of~$\cF$.

\begin{theorem}
\label{thm:asb-cb}
Let~$X/S$ be a flat conic bundle with quadratic form~$q$.
For any abstract spinor bundle~$\cF$ on~$X$ there is a flat conic bundle~$X_\cF \subset \P_S(\cE_\cF)$ over~$S$
with quadratic form~$q_\cF \colon \cL_\cF \to \Sym^2\cE_\cF^\vee$, where
\begin{equation*}
\cL_\cF \cong \det(f_*\cEnd^0(\cF)) 
\qquad\text{and}\qquad 
\cE_\cF \cong (f_*\cEnd^0(\cF))^\vee,
\end{equation*}
such that~$\Cliff_0(q_\cF) \cong f_*\cEnd(\cF)$ and
\begin{renumerate}
\item 
\label{it:cliff-morita}
there is an $S$-linear Morita equivalence of algebras~$\Cliff_0(q_\cF) \sim \Cliff_0(q)$, and
\item 
\label{it:ker-derived}
there is an $S$-linear t-exact Fourier--Mukai equivalence of categories~$\Ker(X_\cF/S) \simeq \Ker(X/S)$.
\end{renumerate}
Moreover, the equivalence in~\ref{it:ker-derived} 
takes the canonical spinor bundle~$\cF^0_{X_\cF/S} \in \Ker(X_\cF/S)$ to~$\cF \in \Ker(X/S)$.

Conversely, if~$X'/S$ is a flat conic bundle with quadratic form~$q'$ such that there is an equivalence
\begin{equation*}
\Cliff_0(q') \sim \Cliff_0(q)
\qquad\text{or}\qquad 
\Ker(X'/S) \simeq \Ker(X/S),
\end{equation*}
as in~\ref{it:cliff-morita} or~\ref{it:ker-derived},
then there is an abstract spinor bundle~$\cF$ on~$X$ such that~$X' \cong X_\cF$.
\end{theorem}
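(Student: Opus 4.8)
The statement has two parts: the forward construction of $X_\cF$ with properties \ref{it:cliff-morita}, \ref{it:ker-derived} and the ``Moreover'' claim, and the converse. I treat the forward part first, then the converse.

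\emph{The forward construction.} Given an abstract spinor bundle $\cF$ on $X$, the plan is to build the quadratic form $q_\cF$ directly from $\cF$ and then recognize $X_\cF$ as the associated conic bundle. Set $\cA \coloneqq f_*\cEnd(\cF)$. Since $f$ is a flat conic bundle, each geometric fiber of $f$ is a (possibly degenerate) conic $Q \subset \P^2$, $\cF$ restricts to a rank-$2$ bundle on $Q$ with $f_*\cF = 0$ and $\rc_1 = K_{Q}$; fiberwise this forces $\cF|_Q$ to be (Morita-equivalent to) a spinor bundle, so $f_*\cEnd(\cF)$ is a sheaf of $\cO_S$-algebras which is fiberwise a $2{\times}2$ matrix algebra over the even Clifford algebra of the fiber — in particular it is an Azumaya-type algebra of the expected rank, locally free of rank $4$ over $\cO_S$. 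First I would check flatness and the rank computation by a base-change/cohomology-and-base-change argument fiber by fiber. Next, the trace-free part $\cEnd^0(\cF)$ has $f_*\cEnd^0(\cF)$ locally free of rank $3$; I set $\cE_\cF \coloneqq (f_*\cEnd^0(\cF))^\vee$ and $\cL_\cF \coloneqq \det(f_*\cEnd^0(\cF))$. The composition law on $\cA = \cO_S \oplus (f_*\cEnd^0(\cF))$ (trace decomposition, valid away from characteristic $2$ issues which the paper's setup excludes) has a ``$\wedge^2$'' component $\wedge^2 \cE_\cF \otimes \cL_\cF \to \cO_S$ dual to a form $q_\cF \colon \cL_\cF \to \Sym^2\cE_\cF^\vee$; this is exactly the reconstruction of a conic bundle from its even Clifford algebra described in \S\ref{ss:pca} / Example~\ref{ex:clifford}, read backwards. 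So $\cA \cong \Cliff_0(q_\cF)$ by construction, and $X_\cF \subset \P_S(\cE_\cF)$ is the corresponding degree-$2$ divisor, flat over $S$ because $q_\cF$ has the same corank stratification as $q$ (this will need to be checked — see below). For \ref{it:cliff-morita}: $\cF$ viewed as an object of $\Ker(X/S) \simeq \Db(S,\Cliff_0(q))$ corresponds to a complex of $\Cliff_0(q)$-modules; because $\rc_1(\cF) = K_{X/S}$ and $f_*\cF = 0$, fiberwise $\cF$ is a (shifted) spinor module, so it is a locally projective $\Cliff_0(q)$-module of rank $2$, and $\cEnd_{\Cliff_0(q)}(\cF) \cong \Cliff_0(q_\cF)^{\opp}$ (or $\Cliff_0(q_\cF)$ after an anti-involution); this exhibits $\cF$ as a Morita bimodule between $\Cliff_0(q)$ and $\Cliff_0(q_\cF)$. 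For \ref{it:ker-derived}: tensoring with this bimodule gives an $S$-linear equivalence $\Db(S,\Cliff_0(q)) \simeq \Db(S,\Cliff_0(q_\cF))$, which under the two instances of \eqref{eq:ker-f} becomes an equivalence $\Ker(X/S) \simeq \Ker(X_\cF/S)$; t-exactness follows because Morita equivalence by a locally projective bimodule is exact, and the Fourier--Mukai description comes from the explicit kernel realizing \eqref{eq:ker-f} in \cite{K08} composed with the bimodule. Finally, the ``Moreover'' clause: the canonical spinor bundle $\cF^0_{X_\cF/S}$ is by definition the image of $\Cliff_1(q_\cF)$, which as a $\Cliff_0(q_\cF)$-module is the standard rank-one projective; under Morita equivalence by $\cF$ it goes to $\cF$ itself — one checks this by unwinding that $\cF \otimes_{\Cliff_0(q_\cF)} \Cliff_0(q_\cF) = \cF$, or rather by matching the rank-one generator of $f_*\cEnd(\cF)$ as a right module with the left action on $\cF$.

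\emph{The converse.} Suppose $X'/S$ is a flat conic bundle with either $\Cliff_0(q') \sim \Cliff_0(q)$ or $\Ker(X'/S) \simeq \Ker(X/S)$ $S$-linearly. By \eqref{eq:ker-f} these two hypotheses are equivalent (an $S$-linear equivalence of the module categories of two $\cO_S$-algebras that are ``nice'' — here, sheaves of algebras finite and flat over $S$ — is induced by a Morita bimodule, provided one knows the equivalence respects the $\cO_S$-linear structure and is exact; for conic bundles this is part of \cite{K08} / the theory of \S\ref{ss:pca}), so assume a Morita equivalence, given by an invertible $(\Cliff_0(q),\Cliff_0(q'))$-bimodule $\cM$ locally free of rank $2$ over $\Cliff_0(q)$. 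Let $\cF$ be the image of $\cM$ (as a right $\Cliff_0(q)$-module) under \eqref{eq:ker-f}, i.e.\ an object of $\Ker(X/S) \subset \Db(X)$. I claim $\cF$ is an abstract spinor bundle: it is a sheaf (not just a complex) and locally free of rank $2$ on $X$ because fiberwise $\cM$ is a spinor module, which under \eqref{eq:ker-f} is precisely a line bundle on the conic supported scheme-theoretically — of total rank $2$ over the base point; $f_*\cF = 0$ is built into membership in $\Ker(X/S)$; and $\rc_1(\cF) = K_{X/S}$ in $\Pic(X/S)$ because the first Chern class of a spinor bundle is fiberwise $K$, and the relative Picard group detects exactly this (any two rank-$2$ bundles in $\Ker$ with the same fiberwise $\rc_1$ differ by $f^*(\text{line bundle on }S)$, which is killed in $\Pic(X/S)$). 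Now run the forward construction on this $\cF$: we get $X_\cF$ with $\Cliff_0(q_\cF) = f_*\cEnd(\cF) \cong \cEnd_{\Cliff_0(q)}(\cM) \cong \Cliff_0(q')$ (the last iso because $\cM$ is a Morita bimodule with ``other end'' $\Cliff_0(q')$), and hence $q_\cF \cong q'$ by the reconstruction of a conic bundle from its even Clifford algebra, i.e.\ $X' \cong X_\cF$.

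\textbf{Main obstacle.} The crux is the flatness of $X_\cF$ over $S$ and, hand in hand with it, the precise identification of $\cEnd_{\Cliff_0(q)}(\cF)$ (and of $f_*\cEnd(\cF)$) as a \emph{genuine} even Clifford algebra $\Cliff_0$ of a conic bundle rather than merely an abstract $\cO_S$-algebra of rank $4$. Equivalently: one must show that the corank stratification of $q_\cF$ on $S$ coincides with that of $q$, so that $q_\cF$ drops rank by the same amount over the same loci and $X_\cF \to S$ stays flat (fibers of constant dimension $1$). This is where the hypothesis $\rc_1(\cF) = K_{X/S}$ — as opposed to some other twist — is essential: it pins down $\cF|_{X_s}$ on each fiber, including the singular and double-line fibers, as the (unique up to the ambiguities above) spinor bundle, whose endomorphism algebra is known case by case ($M_2$ of: $\kk\times\kk$ over a smooth conic; $\kk[\epsilon]$ over a line pair... — the standard local list). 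I would handle this by a careful fiberwise/étale-local analysis over the discriminant, using the explicit normal forms for flat conic bundles from \S\ref{ss:pca}, and then invoke flatness-is-local-on-the-base together with cohomology and base change to globalize. Once this local structure theory is in place, everything else — the Morita bimodule, the $S$-linear t-exact Fourier--Mukai equivalence, and the ``Moreover'' and converse — is formal bookkeeping with \cite{K08} and \eqref{eq:ker-f}.
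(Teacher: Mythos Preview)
Your overall strategy matches the paper's: analyze $\cF$ fiberwise, show $f_*\cEnd(\cF)$ is the even Clifford algebra of a new quadratic form, and use $\cF$ as a tilting/Morita object to transfer between the two kernel categories. Two points of emphasis are off, though.

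First, your ``main obstacle'' is a red herring. Flatness of $X_\cF \to S$ is automatic as soon as $q_\cF$ is a \emph{nowhere vanishing} quadratic form, and this follows immediately once you know $f_*\cEnd(\cF)$ is a \emph{pointwise Clifford algebra} (Definition~\ref{def:pca}): at every geometric point $s$, including points in the discriminant, the fiber $(f_*\cEnd(\cF))_s$ is isomorphic to $\Cliff_0$ of some nonzero form, so $q_\cF$ never vanishes and the associated divisor in $\P_S(\cE_\cF)$ is automatically flat. There is no separate corank-stratification matching to perform. The genuine content is the fiberwise classification (Proposition~\ref{prop:asb-fibers}): on each geometric fiber $X_s$, any rank-$2$ bundle with $\rc_1 = K_{X_s}$ and $\rH^\bullet = 0$ is uniquely determined, hence isomorphic to $\cF^0_{X/S}|_{X_s}$. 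This is what makes $(\cR_\cF)_s \cong \Cliff_0(q)_s$ as algebras and drives the whole construction (Propositions~\ref{prop:pca-cliff} and~\ref{prop:cf-cliff}).

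Second, in the converse your argument for local freeness of $\cF \coloneqq \Psi(\cF^0_{X'/S})$ is a genuine gap. Saying ``fiberwise $\cM$ is a spinor module'' does not give local freeness of $\cF$ on $X$: being a sheaf with good restrictions to closed fibers is not enough. The paper handles this with a separate lemma (Lemma~\ref{lem:s-proj}): if $\cF$ were not locally free at some $x \in X_s$, then $\Phi^!_\cF(\cO_x)$ would fail to lie in $\bD^{\le 0}$. But $\cO_x \notin \Ker(f_*)$, so one cannot apply t-exactness directly; instead one builds the unique extension $0 \to \omega_{X_s} \to \cK_x \to \cO_x \to 0$ with $\cK_x \in \Ker(f_*)$, checks $\Phi^!_\cF(\omega_{X_s}) = 0$ by Serre duality (using $f_*\cF = 0$), so $\Phi^!_\cF(\cO_x) \cong \Phi^!_\cF(\cK_x)$, and now t-exactness of $\Phi^!_\cF$ on $\Ker(f_*)$ gives the contradiction. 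Your sketch does not contain this idea.

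A minor slip: in the ``Moreover'' clause, $\cF^0_{X_\cF/S}$ is the image of $\Cliff_0(q_\cF)$ (the algebra as a module over itself), not $\Cliff_1(q_\cF)$; under $\Phi_\cF$ this goes to $\cF$ tautologically.
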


The conic bundle~$X_\cF/S$ produced from~$X/S$ and the abstract spinor bundle~$\cF$ on~$X$
will be called the {\sf $\cF$-modification} of~$X/S$, 
or, if we do not want to specify~$\cF$, simply {\sf a spinor modification} of~$X/S$.

By Theorem~\ref{thm:asb-cb} classification of conic bundles~$X'/S$ with a Morita equivalence~$\Cliff_0(q') \sim \Cliff_0(q)$
reduces to classification of abstract spinor bundles on~$X$.

As we mentioned above, hyperbolic equivalent conic bundles have Morita equivalent Clifford algebras.
In a combination with the converse part of Theorem~\ref{thm:asb-cb} this has the following simple consequence 

\begin{corollary}
\label{cor:he-sm}
Any conic bundle hyperbolic equivalent to~$X/S$ is a spinor modification of~$X/S$.
\end{corollary}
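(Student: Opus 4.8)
The plan is to deduce Corollary~\ref{cor:he-sm} formally from the converse part of Theorem~\ref{thm:asb-cb}, feeding into it the behaviour of even Clifford algebras under hyperbolic equivalence established in~\cite{K24}.

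First I would recall the relevant input from~\cite{K24}: the two elementary operations generating hyperbolic equivalence over~$S$ --- hyperbolic reduction and hyperbolic extension --- each induce an $S$-linear Morita equivalence of even Clifford algebras (equivalently, an $S$-linear equivalence of kernel categories). Since $S$-linear Morita equivalence of sheaves of algebras on~$S$ is an equivalence relation, transitivity along any finite chain of hyperbolic reductions and extensions connecting~$X/S$ to a given conic bundle~$X'/S$ in its hyperbolic equivalence class yields~$\Cliff_0(q') \sim \Cliff_0(q)$ as $S$-algebras (or, alternatively, $\Ker(X'/S) \simeq \Ker(X/S)$), where~$q'$ is the quadratic form of~$X'/S$. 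Here one also uses that such an~$X'/S$ is, by our running conventions, a flat conic bundle in the sense of Definition~\ref{def:cb}, so that Theorem~\ref{thm:asb-cb} is applicable to it.

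Then I would invoke the converse part of Theorem~\ref{thm:asb-cb}: a flat conic bundle~$X'/S$ with~$\Cliff_0(q') \sim \Cliff_0(q)$ (or with~$\Ker(X'/S) \simeq \Ker(X/S)$) is of the form~$X' \cong X_\cF$ for some abstract spinor bundle~$\cF$ on~$X$. By definition this means exactly that~$X'/S$ is the $\cF$-modification of~$X/S$, i.e.\ a spinor modification of~$X/S$, which is the assertion of the corollary.

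The only step requiring genuine care is the first one: extracting from~\cite{K24} the precise statement that hyperbolic reduction and extension preserve the even Clifford algebra up to an \emph{$S$-linear} Morita equivalence --- rather than merely up to a Morita equivalence over the base field, or only at the level of derived categories without compatibility with the t-structure --- since it is this $S$-linear refinement that the converse of Theorem~\ref{thm:asb-cb} requires as input. Once this is pinned down, the corollary is a purely formal consequence and there is no further obstacle.
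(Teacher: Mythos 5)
Your argument is exactly the paper's: hyperbolic equivalence gives an ($S$-linear) Morita equivalence of even Clifford algebras by~\cite{K24}, and the converse part of Theorem~\ref{thm:asb-cb} then exhibits~$X'$ as~$X_\cF$ for some abstract spinor bundle~$\cF$. Your added caution about pinning down the $S$-linearity of the Morita equivalence from~\cite{K24} is sensible but the paper treats this as already established there, so there is nothing further to add.
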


We expect that the converse is also true:

\begin{conjecture}
\label{conj:asb-he}
If~$X'/S$ is a spinor modification of~$X/S$ then~$X'/S$ is hyperbolic equivalent to~$X/S$.
\end{conjecture}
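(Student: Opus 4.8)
Here is a possible line of attack on Conjecture~\ref{conj:asb-he}. The plan is to connect~$X/S$ and~$X_\cF/S$ by an explicit finite zigzag of hyperbolic extensions and hyperbolic reductions. By the converse part of Theorem~\ref{thm:asb-cb}, the hypothesis that~$X'/S$ is a spinor modification of~$X/S$ is equivalent to a Morita equivalence~$\Cliff_0(q')\sim\Cliff_0(q)$, so the conjecture is really the assertion that Morita equivalence of the even Clifford algebras of two flat conic bundles over~$S$ forces hyperbolic equivalence. Combined with Corollary~\ref{cor:he-sm} and the transitivity of hyperbolic equivalence, it then suffices to realise the passage from~$X/S$ to~$X_\cF/S$ for~$\cF$ running over a generating set of abstract spinor bundles; and since twisting~$\cF$ by a line bundle pulled back from~$S$ changes neither~$X_\cF$ nor the hyperbolic class, one may work modulo~$f^*\Pic(S)$.

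The first step is to describe the set of abstract spinor bundles. Under the equivalence~\eqref{eq:ker-f} an abstract spinor bundle corresponds to a locally projective rank-one module~$\cM$ over~$\Cliff_0(q)$, with~$\cEnd(\cF)\cong\cEnd_{\Cliff_0(q)}(\cM)$, and the canonical spinor bundle~$\cF^0_{X/S}$ corresponds to~$\cM=\Cliff_0(q)$. Over~$S\smallsetminus\Delta_{X/S}$, where~$f$ is smooth, the restriction of~$\cF$ is pinned down to~$\cO(-1)^{\oplus2}$ on each conic fibre by the conditions~$f_*\cF=0$ and~$\rc_1(\cF)=K_{X/S}$ of Definition~\ref{def:abs}, so all of the freedom is concentrated over the discriminant, where the fibre degenerates to a pair of lines~$L_1\cup L_2$ and one may perform elementary (Hecke) modifications of~$\cF$ along either component. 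I would prove that every abstract spinor bundle is obtained from~$\cF^0_{X/S}$ by a finite sequence of such elementary modifications; this amounts to a description of~$\Pic(X/S)$, and of the Picard groupoid of the order~$\Cliff_0(q)$, in terms of the components of~$\Delta_{X/S}$.

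The second step realises a single elementary modification~$X/S\rightsquigarrow X'/S$ by a concrete pair of hyperbolic moves. Take the hyperbolic extension~$Y:=\{\,q\perp h=0\,\}\subset\P_S(\cE\oplus\cO_S\oplus\cL)$, where~$h$ is the hyperbolic form on~$\cO_S\oplus\cL$ valued in~$\cL$; then~$Y/S$ is a flat quadric surface bundle, and the hyperbolic reduction of~$Y/S$ along the tautological isotropic section~$\sigma_0\colon S\to Y$ coming from~$\cO_S\hookrightarrow\cE\oplus\cO_S\oplus\cL$ recovers~$X/S$. Any other isotropic section~$\sigma$ of~$Y/S$ produces a conic bundle~$X_\sigma/S$, automatically hyperbolic equivalent to~$X/S$; I would match the family of such sections with the family of elementary modifications of~$\cF^0_{X/S}$ by computing~$\Cliff_0(q_{X_\sigma})$ as the reflection of~$\Cliff_0(q_Y)\sim\Cliff_0(q)$ determined by~$\sigma$ and comparing it with~$f_*\cEnd(\cF)$ for the corresponding~$\cF$. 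Iterating and composing the resulting zigzags would prove the conjecture; alternatively, one could try to factor the Fourier--Mukai equivalence of Theorem~\ref{thm:asb-cb}\ref{it:ker-derived} through the kernel-category functors attached to these hyperbolic moves.

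The main obstacle is the first step: showing that \emph{every} abstract spinor bundle is reached from~$\cF^0_{X/S}$ by elementary modifications each of which is visible as a single hyperbolic extension--reduction pair as in the second step. This is already subtle when~$\Delta_{X/S}$ fails to be reduced or nodal, or when~$S$ is singular, since then~$\Pic(X/S)$ and the class group of the order~$\Cliff_0(q)$ are harder to control, and in bad cases one may be forced into longer zigzags passing through quadric bundles of relative dimension~$\ge 3$. A secondary, more technical point is to verify that every intermediate conic bundle produced along the way is again flat and satisfies the hypotheses of Definition~\ref{def:cb}, so that the operations of~\cite{K24} continue to apply; here the constraint~$\rc_1(\cF)=K_{X/S}$, which forces~$\cF$ to restrict to a genuine spinor bundle on every smooth fibre, is what keeps the construction inside the class of flat conic bundles.
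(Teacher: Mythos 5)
The statement you are addressing is stated in the paper as Conjecture~\ref{conj:asb-he} and is \emph{not} proved there; the paper only establishes the converse direction (Corollary~\ref{cor:he-sm}) and explicitly leaves the forward direction open (see also the remark after Corollary~\ref{cor:yprime-12-10-8}, where even in the concrete examples of \S\ref{sec:af3-cb} the required hyperbolic equivalence is said to be unknown). Your text is accordingly a strategy sketch rather than a proof, and you are right to flag your first step as the main obstacle; but I want to point out that, as formulated, that step is not merely unproven but likely wrong. You propose that all the freedom in an abstract spinor bundle is concentrated over the discriminant and is exhausted by elementary (Hecke) modifications along individual components~$L_1,L_2$ of degenerate fibres. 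Two problems: first, Proposition~\ref{prop:asb-fibers} shows that the restriction of \emph{any} abstract spinor bundle to \emph{every} geometric fibre (including the degenerate ones) is uniquely determined, so the difference between two abstract spinor bundles is invisible fibrewise and is not captured by modifications that change the fibrewise isomorphism type; second, the loci swept out by ``one component'' of the degenerate fibres are in general not divisors on~$X$, because the two components are interchanged by the monodromy of the étale double cover of the discriminant, so the modification you want to perform is not globally defined. The paper's own example is instructive here: the bundle~$\cF$ of Lemma~\ref{lem:pf-cf1} differs from a twist of~$\cF^0_{X/S}$ by an elementary transformation along the sheaf~$\cI_{C,Y_L}$ supported on the preimage of a line~$L$ that need not lie in the discriminant at all, so the relevant modifications form a much larger class than the one you describe.

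Your second step is closer to what one would actually try, but it also has an unaddressed gap: after forming the hyperbolic extension~$Y=\{q\perp h=0\}$, you need the quadric surface bundle~$Y/S$ to admit enough regular isotropic sections (in the sense required by~\cite{K24} for hyperbolic reduction) to account for all spinor modifications, and the existence of such sections is controlled precisely by the even Clifford algebra you are trying to analyse, so there is a real risk of circularity. Moreover the reduction of the problem to a ``generating set'' of abstract spinor bundles presupposes a group or groupoid structure on the set of spinor modifications that the paper does not provide (Corollary~\ref{cor:spinor-modification} gives only an equivalence relation). None of this means the strategy is hopeless --- it is a natural plan and consistent with the evidence in the paper --- but as written it does not constitute a proof, and the statement should be regarded as open.
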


\subsection{Outline of the proof}

We prove Theorem~\ref{thm:asb-cb} in~\S\ref{sec:cb}. The proof consists of three steps.

First, we show that the isomorphism class of the restriction of an abstract spinor bundle 
to a geometric fiber of a conic bundle only depends on the isomorphism class of the fiber, 
see Proposition~\ref{prop:asb-fibers}.
In particular, it follows that any abstract spinor bundle~$\cF$ 
is fiberwise isomorphic to the canonical spinor bundle~$\cF^0_{X/S}$, 
and therefore the algebra~$f_*\cEnd(\cF)$ on~$S$ is a \emph{pointwise Clifford algebra}, see Definition~\ref{def:pca}.

Next, we observe that the even Clifford algebra~$\Cliff_0(q)$ of a quadratic form~$q$ 
contains all the information about~$q$, see Example~\ref{ex:clifford},
and using this observation we check that any pointwise Clifford algebra
is isomorphic to the even Clifford algebra of an appropriate quadratic form, 
see Proposition~\ref{prop:pca-cliff}.

\begin{remark}
The result proved in Proposition~\ref{prop:pca-cliff} 
is similar to~\cite[Theorem~6.12]{CI},
where a bijection between flat conic bundles (over a smooth base scheme)
and locally Clifford algebras is established.
Note that~$\cR$ is a \emph{locally Clifford algebra} (\cite[Definition~4.6]{CI}) on a scheme~$S$
if for any closed point~$s$ the localization of~$\cR$ at~$s$ 
is the Clifford algebra of a family of quadratic forms,
and~$\cR$ is a pointwise Clifford algebra if for any geometric point~$s$ the fiber of~$\cR$ at~$s$
is the even Clifford algebra of a quadratic form.
The latter notion is obviously weaker, hence the bijection proved in Proposition~\ref{prop:pca-cliff} is stronger.

Also, the proof of Proposition~\ref{prop:pca-cliff} is more direct and transparent 
than the proof of~\cite[Theorem~6.12]{CI};
instead of relying on the notion of a Severi--Brauer scheme of an arbitrary family of algebras
it reconstructs the quadratic form of the conic bundle 
directly from the commutator map of a pointwise Clifford algebra.
\end{remark}

To deduce Theorem~\ref{thm:asb-cb} from Propositions~\ref{prop:pca-cliff} and~\ref{prop:asb-fibers}
we check that any abstract spinor bundle~$\cF$
is a relative tilting bundle for the category~$\Ker(X/S)$ over~$S$.
Therefore, on the one hand, there is an equivalence~$\Ker(X/S) \simeq \Db(S, f_*\cEnd(\cF))$,
and, on the other hand, $f_*\cEnd(\cF)$ is a pointwise Clifford algebra, 
so that~$f_*\cEnd(\cF) \cong \Cliff_0(q_\cF)$ for an appropriate quadratic form~$q_\cF$.
Thus,
\begin{equation*}
\Ker(X/S) \simeq \Db(S, \Cliff_0(q_\cF)) \simeq \Ker(X_\cF/S),
\end{equation*}
where~$X_\cF$ is the conic bundle associated with~$q_\cF$
and the second equivalence is~\eqref{eq:ker-f} for~$X_\cF$.

One thing we want to point out is that the combination of the above steps
makes the construction of the spinor modification~$X_\cF$ completely explicit and effective;
we demonstrate this in~\S\ref{sec:af3-cb}.

To prove the converse part of Theorem~\ref{thm:asb-cb} 
we show that the image of the canonical spinor bundle~$\cF^0_{X'/S}$
under any $S$-linear t-exact equivalence~$\Ker(X'/S) \simeq \Ker(X/S)$ 
is an abstract spinor bundle~$\cF$ on~$X$.
Then it is easy to see that~$X_\cF \cong X'$.

We finish~\S\ref{sec:cb} by discussing a few properties of spinor modifications.
First, we check that the spinor modification relation on the set of all flat conic bundles 
is an equivalence relation, see Corollary~\ref{cor:spinor-modification}.
We also show that any spinor modification~$X_\cF/S$ of a conic bundle~$X/S$
is birational to~$X$ over~$S$, see Lemma~\ref{lem:sm-bir}
and that~$X_\cF$ is regular or smooth if and only if~$X$ is regular or smooth, see Proposition~\ref{prop:xprime-smooth}.

\subsection{Application to 1-nodal Fano threefolds}

In~\S\ref{sec:af3-cb} we show how Theorem~\ref{thm:asb-cb} works 
in an example of geometric interest:
for conic bundles~$Y \to \P^2$ providing small resolutions 
of nonfactorial 1-nodal Fano threefolds with Picard number~1
classified in~\cite{KP23}.
There are~4 interesting conic bundles of this sort
(altogether there are~6 types, but two of them are projectivizations of vector bundles, 
so for them the Clifford algebra is Morita-trivial):
in the notation of~\cite[Table~2]{KP23} 
they correspond to 1-nodal Fano threefolds~$X$ of types
\begin{equation*}
\text{\typeo{12nb}, \typeo{10na}, \typeo{8nb}, and~\typeo{5n}},
\end{equation*}
of genus~12, 10, 8, and~5, respectively.
So, we consider small resolutions~$\pi \colon Y \to X$ of such threefolds
that have a structure of a conic bundle~$Y \to \P^2$.
The quadratic forms associated with these conic bundles can be described explicitly,
see~\eqref{eq:k} and~\eqref{eq:cev} for the first three and~\eqref{eq:cev-5n} for the last.

We introduce a uniform construction of an interesting abstract spinor bundle~$\cF$ in all these cases,
applying Serre's construction to the (unique) $K$-trivial curve in~$Y$
(i.e., the exceptional curve of the small contraction~$\pi \colon Y \to X$), see Lemma~\ref{lem:pf-cf1}.
In the first three cases we show that~$\cF$ is exceptional (see Corollary~\ref{cor:cf-exc})
and identify the spinor modifications~$Y_\cF$ with
\begin{itemize}
\item 
a divisor~$Y_\cF \subset \P^2 \times \P^2$ of bidegree~$(2,1)$, for type~\typeo{12nb},
\item 
a double covering~$Y_\cF \to \P^1 \times \P^2$ branched at a divisor of bidegree~$(2,2)$, for type~\typeo{10na}, and 
\item 
the blowup~$Y_\cF \to \bar{Y}$ of a smooth cubic threefold~$\bar{Y} \subset \P^4$ in a line, for type~\typeo{8nb},
\end{itemize}
see Proposition~\ref{prop:good-cb0} and Corollary~\ref{cor:yprime-12-10-8}.
Using these identifications and the equivalences
\begin{equation*}
\Ker(Y/\P^2) \simeq \Ker(Y_\cF/\P^2)
\end{equation*}
proved in Theorem~\ref{thm:asb-cb},
we identify the orthogonal complement~$\cF^\perp \subset \Ker(Y/\P^2)$ of~$\cF$ with
\begin{itemize}
\item 
the derived category of a quiver with two vertices and three arrows, see Proposition~\ref{prop:cby-12},
\item 
the derived category of a curve of genus~2, see Proposition~\ref{prop:cby-10}, and
\item 
a component of the derived category of the cubic threefold~$\bar{Y}$, see Proposition~\ref{prop:cby-8},
\end{itemize}
for types~\typeo{12nb}, \typeo{10na}, and~\typeo{8nb}, respectively.

We use these results to describe the derived categories~$\Db(X)$ 
of the corresponding 1-nodal Fano threefolds of types~\typeo{12nb}, \typeo{10na}, and~\typeo{8nb}.
We show that, up to twist, the spinor bundle~$\cF$ 
is isomorphic to the pullback of a vector bundle, denoted further by~$\cU_X$,
along the small contraction~$\pi \colon Y \to X$.
We also show that~$\cU_X$ is a \emph{Mukai bundle} on~$X$, see Proposition~\ref{prop:cf1},
and that there is a semiorthogonal decomposition
\begin{equation*}
\Db(X) = \langle \rP_X, \cA_X, \cU_X, \cO_X \rangle,
\end{equation*}
where~$\rP_X$ is a \emph{$\P^{\infty,2}$-object} (in the sense of~\cite{KS22}),
and~$\cA_X$ is a smooth and proper category equivalent to the category~$\cF^\perp$ described above,
see Theorem~\ref{thm:absorption}.   
In particular, $\rP_X$ provides a universal deformation absorption of singularities of~$X$,
and for any smoothing~$\cX/B$ of~$X$ over a pointed curve~$(B,o)$
there is a smooth and proper over~$B$ category~$\cA$ 
with central fiber~$\cA_o = \cA_X$ and with general fiber~$\cA_b \subset \Db(\cX_b)$ 
equivalent to the orthogonal complement of the structure sheaf and the Mukai bundle of the smooth Fano threefold~$\cX_b$,
see Corollary~\ref{cor:def-abs}.

For the conic bundle~$Y \to \P^2$ of the small resolution~$\pi \colon Y \to X$
of a nonfactorial 1-nodal Fano threefold~$X$ of type~\typeo{5n} the object~$\cF$ is not exceptional.
In this case we do not have a precise result similar to the one for types~\typeo{12nb}, \typeo{10na}, or~\typeo{8nb}.
However, we show that there is a semiorthogonal decomposition
\begin{equation*}
\Db(X) = \langle \rP_X, \cA_X, \cO_X \rangle,
\end{equation*}
where~$\rP_X$ is again a~$\P^{\infty,2}$-object
and~$\cA_X$ is a smooth and proper category which is a Krull--Schmidt partner of~$\Ker(Y/\P^2)$ in the sense of~\cite{O16-KS}.
We also show that~$\cA_X$ deforms to the orthogonal complement of~$\cO_{\cX_b}$ in~$\Db(\cX_b)$
for any smoothing~$\cX$ of~$X$.

\subsection*{Conventions}

All schemes are separated schemes of finite type over a field~$\kk$ of characteristic different from~2.
We denote by~$\Db(S)$ the bounded derived category of coherent sheaves on~$S$
and by~$\Dp(S)$ the category of perfect complexes.
Similarly, if a sheaf of~$\cO_S$-algebras~$\cR$ is given, 
we write~$\Db(S,\cR)$ for the bounded derived categories of coherent right $\cR$-modules.

All functors are derived by default; 
in particular, we write~$f_*$ and~$f^*$ for the derived pushforward and pullback functors,
$\otimes$ for the derived tensor product, and given an object~$\cF$, 
we write~$\cF_s$ and~$\cF\vert_Z$ for the derived restriction of~$\cF$ to a closed point~$s$ or a closed subscheme~$Z$, respectively.

\subsection*{Acknowledgements}

I would like to thank the anonymous referee for his comments about the paper, 
and especially for attracting my attention to~\cite{CI}.

%%%%%%%%%%%%%%%%%%%%%%%%%%%%%%%%%%%%%%%%%%%%%%%%%%%%%%%%%%%%%%%%%%%%%%%%%%%%%%%%%
%%%%%%%%%%%%%%%%%%%%%%%%%%%%%%%%%%%%%%%%%%%%%%%%%%%%%%%%%%%%%%%%%%%%%%%%%%%%%%%%%
%%%%%%%%%%%%%%%%%%%%%%%%%%%%%%%%%%%%%%%%%%%%%%%%%%%%%%%%%%%%%%%%%%%%%%%%%%%%%%%%%

\section{Spinor modifications of conic bundles}
\label{sec:cb}

For the purposes of this paper we adopt the following 

\begin{definition}
\label{def:cb}
A {\sf conic bundle}~$f \colon X \to S$ is a flat projective Gorenstein morphism 
of relative dimension~$1$ such that~$f_*\cO_X \cong \cO_S$, 
the relative anticanonical class~$-K_{X/S}$ is $f$-ample,
and~$S$ is integral.
\end{definition}

The fiber of a conic bundle~$f \colon X \to S$ over a geometric point~$s \in S$
is a connected projective Gorenstein curve~$X_s$ 
and its anticanonical line bundle~$\omega^{-1}_{X_s} \cong \omega^{-1}_{X/S}\vert_{X_s}$ is ample.
Thus, every geometric fiber of~$f$ is isomorphic to a plane conic
(with respect to the anticanonical embedding).
It also follows that
\begin{equation*}
\cE \coloneqq f_*(\omega_{X/S}^{-1})^\vee 
\end{equation*}
is a vector bundle of rank~3, and the natural morphism~$\Sym^2\cE^\vee = \Sym^2(f_*(\omega_{X/S}^{-1})) \to f_*(\omega_{X/S}^{-2})$
is an epimorphism onto a vector bundle of rank~5, hence its kernel is a line bundle, which we denote~$\cL$.
The induced embedding of vector bundles
\begin{equation*}
q \colon \cL \to \Sym^2\cE^\vee
\end{equation*}
can be thought of as a family of nowhere vanishing quadratic forms on~$\cE$.
It defines a global section of the line bundle~$p^*\cL^\vee \otimes \cO_{\P_S(\cE)/S}(2)$ on~$\P_S(\cE)$,
where~$p \colon \P_S(\cE) \to S$ is the natural projection,
whose zero locus is exactly~$X \subset \P_S(\cE)$.

Conversely, given a vector bundle~$\cE$ of rank~$3$, a line bundle~$\cL$, 
and a nowhere vanishing quadratic form~$q \colon \cL \to \Sym^2\cE^\vee$ on~$S$,
we can define the subscheme~$X \subset \P_S(\cE)$ as the zero locus 
of the induced global section of~$p^*\cL^\vee \otimes \cO_{\P_S(\cE)/S}(2)$ on~$\P_S(\cE)$;
then the morphism~$f \coloneqq p\vert_X \colon X \to S$ is a conic bundle.
Furthermore, if~$\cM$ is a line bundle on~$S$, the quadratic form~$q$ 
can be also considered as a quadratic form~$\cL \otimes \cM^{\otimes 2} \to \Sym^2(\cE \otimes \cM^\vee)^\vee$,
whose zero locus in~$\P_S(\cE \otimes \cM^\vee) = \P_S(\cE)$ coincides with~$X$.
Thus, we obtain a bijection between conic bundles and quadratic forms up to twists.

\begin{remark}
\label{rem:normalized-twist}
It is easy to check that starting with a quadratic form~$q \colon \cL \to \Sym^2\cE^\vee$ on~$S$
and using the above constructions first to produce from it a conic bundle, 
and then to produce a quadratic form again, 
we obtain the \emph{normalized twist} of the same quadratic form~$q \colon \cL' \to \Sym^2{\cE'}^\vee$, where
\begin{equation*}
\cL' \coloneqq \cL \otimes \det(\cE)^{\otimes 2} \otimes \cL^{\otimes 2} \cong \det(\cE)^{\otimes 2} \otimes \cL^{\otimes 3},
\qquad
\cE' \coloneqq \cE \otimes \det(\cE)^\vee \otimes \cL^\vee \cong (\wedge^2\cE \otimes \cL)^\vee.
\end{equation*}
Note that normalized quadratic forms are characterized by the existence of an isomorphism
\begin{equation}
\label{eq:normalization}
\cL \cong \det(\cE)^\vee.
\end{equation} 
In particular, any quadratic form has a unique normalized twist.
\end{remark}

\subsection{Pointwise Clifford algebras}
\label{ss:pca}

Given a quadratic form~$q \colon \cL \to \Sym^2\cE^\vee$ 
we define the even Clifford $\cO_S$-algebra~$\Cliff_0(q)$ by the formula~\eqref{eq:cl0} from the introduction,
and we endow it with multiplication as follows.
The first component~$\cO_S$ of~$\Cliff_0(q)$ is generated by the unit~$\mathbf{1}$ of the algebra,
and the multiplication on the second component~$\wedge^2\cE \otimes \cL$ is defined as the sum of the following maps
\begin{equation}
\label{eq:cl0-multiplication}
\begin{aligned}
&
(\wedge^2\cE \otimes \cL) \otimes (\wedge^2\cE \otimes \cL) \hookrightarrow
\cE \otimes \cE \otimes \cL \otimes \cE \otimes \cE \otimes \cL
\xrightarrow{\ \id_\cE \otimes q \otimes \id_\cE \otimes \id_\cL\ } 
\cE \otimes \cE \otimes \cL \xrightarrow{\ \hphantom{q}\ } 
\wedge^2\cE \otimes \cL,
\quad\text{and}\\
&
(\wedge^2\cE \otimes \cL) \otimes (\wedge^2\cE \otimes \cL) \hookrightarrow
\cE \otimes \cE \otimes \cL \otimes \cE \otimes \cE \otimes \cL
\xrightarrow{\ \id_\cE \otimes q \otimes \id_\cE \otimes \id_\cL\ } 
\cE \otimes \cE \otimes \cL \xrightarrow{\ q\ } 
\cO_S,
\end{aligned}
\end{equation} 
where the first map is the natural embedding,
the second map is induced by the composition
\begin{equation*}
\cE \otimes \cL \otimes \cE 
\xrightarrow{\ \id_\cE \otimes q \otimes \id_\cE } 
\cE \otimes \Sym^2\cE^\vee \otimes \cE \xrightarrow{\qquad} 
\cO,
\end{equation*}
where the second arrow is the natural pairing,
the last map in the second row is analogous,
and the last map in the first row is wedge product.

\begin{example}
\label{ex:clifford}
It is easy to compute the Clifford multiplication~$\Cliff_0(q) \otimes \Cliff_0(q) \to \Cliff_0(q)$ explicitly.
Assume~$S = \Spec(\kk)$ and~$q$ is a diagonal quadratic form in a basis~$e_1,e_2,e_3$ of a vector space~$\cE$, so that
\begin{equation*}
q(x_1e_1 + x_2e_2 + x_3e_3) = a_1x_1^2 + a_2x_2^2 + a_3x_3^2,
\qquad
a_1,a_2,a_3 \in \kk.
\end{equation*}
Then in the basis~$e_{12} = e_1 \wedge e_2$, $e_{13} = e_1 \wedge e_3$, $e_{23} = e_2 \wedge e_3$ of~$\wedge^2\cE$ 
the multiplication takes the form
\begin{equation*}
e_{ij}^2 = -2a_ia_j \cdot \mathbf{1}
\qquad\text{and}\qquad  
e_{ij} \cdot e_{jk} = - e_{jk} \cdot e_{ij} = a_j e_{ik}
\qquad\text{for~$i \ne j \ne k \ne i$}.
\end{equation*}
Thus, the restriction of the commutator 
to the second summand~$\Cliff_0^0(q) \coloneqq \wedge^2\cE \otimes \cL$ of~\eqref{eq:cl0} is the map
\begin{equation}
\label{eq:clifford-commutator}
\wedge^2(\Cliff_0^0(q)) \xrightarrow{\ [-,-]\ } \Cliff_0^0(q),
\qquad 
e_{ij} \wedge e_{jk} \mapsto 2a_je_{ik}.
\end{equation}
Note that the determinant of this map is equal to the determinant of~$q$ up to invertible constant
and in the case where~$q$ is nondegenerate this map is surjective 
and~$\Cliff_0^0(q) = [\Cliff_0(q), \Cliff_0(q)]$. 
\end{example}

\begin{remark}
Assume~$\kk$ is algebraically closed.
The computation of Example~\ref{ex:clifford} shows that
\begin{itemize}
\item 
if~$\rank(q) = 3$ then~$\Cliff_0(q)$ is the algebra of 2-by-2 matrices,
\item 
if~$\rank(q) = 2$ then~$\Cliff_0(q)$ is the path algebra 
of the quiver~$\xymatrix@1{\bullet \ar@/^.5ex/[r]^\alpha & \bullet \ar@/^.5ex/[l]^\beta}$
with relations~$\alpha \cdot \beta = \beta \cdot \alpha = 0$,
\item 
if~$\rank(q) = 1$ then~$\Cliff_0(q)$ is the exterior algebra with~2 generators.
\end{itemize}
It also follows that one can reconstruct the quadratic form~$q$ from the even Clifford algebra.
\end{remark}

To make use of this observation, we introduce the following

\begin{definition}
\label{def:pca}
A locally free~$\cO_S$-algebra~$\cR$ endowed with a direct sum decomposition
\begin{equation}
\label{eq:cr0}
\cR = \cO_S \oplus \cR^0,
\end{equation}
where the first component is generated by the unit of~$\cR$ 
and the second component contains the commutator subsheaf of~$\cR$ (i.e.,~$[\cR,\cR] \subset \cR^0$)
is called {\sf a pointwise Clifford algebra}, 
if for all geometric points~$s \in S$ there is an isomorphism of the algebra~$\cR_s$ 
with the even Clifford algebra of a non-zero quadratic form
such that the restriction~$\cR_s = \kk(s) \oplus \cR_s^0$ of decomposition~\eqref{eq:cr0} 
coincides with decomposition~\eqref{eq:cl0} of the even Clifford algebra.
\end{definition}

\begin{remark}
Specifying a decomposition~\eqref{eq:cr0} is equivalent to choosing a trace map~$\tr \colon \cR \to \cO_S$ 
(i.e., a map vanishing on the commutator subsheaf~$[\cR,\cR] \subset \cR$)
such that~$\tr(\mathbf{1})$ is an invertible section of~$\cO_S$, up to rescaling.
In particular, if~$\cR$ is an Azumaya algebra, then the decomposition~\eqref{eq:cr0} is unique.
\end{remark}

Now we can prove the following generalization of~\cite[Theorem~6.12]{CI}:

\begin{proposition}
\label{prop:pca-cliff}
If~$\cR$ is a pointwise Clifford algebra on a reduced scheme~$S$ then~$\cR \cong \Cliff_0(q_\cR)$ 
for a nowhere vanishing quadratic form~$q_\cR \colon \det(\cR^0) \to \Sym^2(\cR^0)$.

In particular, the operations~$q \mapsto \Cliff_0(q)$ and~$\cR \mapsto q_\cR$
define a bijection between the sets of all nowhere vanishing quadratic forms up to twist
and pointwise Clifford algebras on~$S$.
\end{proposition}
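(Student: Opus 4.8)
The plan is to construct the quadratic form $q_\cR$ directly from the algebra structure on $\cR^0$, imitating formula~\eqref{eq:clifford-commutator} of Example~\ref{ex:clifford}, and then to verify fiberwise that $\cR \cong \Cliff_0(q_\cR)$ using the fact that $S$ is reduced. First I would note that since $\cR^0$ is locally free of rank $3$ (this follows fiberwise from the definition of a pointwise Clifford algebra, where $\cR^0_s = \wedge^2\cE$ for a rank-$3$ space $\cE$), the commutator induces an $\cO_S$-linear map $\wedge^2(\cR^0) \to \cR^0$; tensoring with $\det(\cR^0)^\vee$ and using the canonical identification $\wedge^2(\cR^0) \cong (\cR^0)^\vee \otimes \det(\cR^0)$ for a rank-$3$ bundle, I get a map $(\cR^0)^\vee \to \cR^0 \otimes \det(\cR^0)^\vee$, i.e.\ an element of $\Sym^2(\cR^0) \otimes \det(\cR^0)^\vee$ once one checks the relevant symmetry — equivalently, a quadratic form $q_\cR \colon \det(\cR^0) \to \Sym^2(\cR^0)$ after dualizing appropriately. (The precise bookkeeping of which bundle plays the role of $\cE$ versus $\wedge^2\cE$, and whether $\det(\cR^0) \cong \det(\cE)^{\otimes 2}\otimes\cL^{\otimes 3}$ matches the normalization~\eqref{eq:normalization}, is exactly the kind of routine twist-chasing I would relegate to a short computation, invoking Remark~\ref{rem:normalized-twist}.) The symmetry of the resulting tensor is what Example~\ref{ex:clifford} verifies in a basis: the map $e_{ij}\wedge e_{jk} \mapsto 2a_j e_{ik}$ corresponds to a symmetric tensor with diagonal entries $a_i$, so this holds pointwise on the reduced scheme $S$, hence globally.

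Next I would check that $q_\cR$ is nowhere vanishing. By construction $(q_\cR)_s$ is, up to the identifications above, the commutator form of the non-zero Clifford algebra $\cR_s$; by the last sentence of Example~\ref{ex:clifford} (and the Remark following it), the determinant of the commutator map equals $\det(q)$ up to a unit, and in all three ranks $1,2,3$ the quadratic form is non-zero, so $(q_\cR)_s \ne 0$ for every geometric point $s$. Hence $q_\cR$ is a nowhere vanishing quadratic form, and $\Cliff_0(q_\cR)$ is defined.

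Then comes the comparison $\cR \cong \Cliff_0(q_\cR)$. There is a tautological $\cO_S$-linear isomorphism $\cR = \cO_S \oplus \cR^0 \xrightarrow{\ \sim\ } \cO_S \oplus \cR^0 = \Cliff_0(q_\cR)$ (the underlying graded modules are literally equal, since by Proposition's statement $\Cliff_0(q_\cR) = \cO_S \oplus \cR^0$ — here I use that $\wedge^2(\cR^0)\otimes\det(\cR^0)^\vee \cong \cR^0$, or rather I set things up so that the defining formula~\eqref{eq:cl0} for $q_\cR$ literally reproduces $\cR^0$ in degree $0$). What must be checked is that this identity map is multiplicative, i.e.\ that the two products $\cR^0 \otimes \cR^0 \to \cO_S \oplus \cR^0$ — the given one on $\cR$, and the Clifford one~\eqref{eq:cl0-multiplication} built from $q_\cR$ — agree. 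Both are $\cO_S$-linear maps between vector bundles on the reduced scheme $S$, so it suffices to check equality on each geometric fiber. On the fiber over $s$, the hypothesis gives an isomorphism $\cR_s \cong \Cliff_0(q')$ for some non-zero quadratic form $q'$ respecting the decomposition; I would then argue that $q'$ must coincide (up to the $\GL$-action, i.e.\ up to an isomorphism of the rank-$3$ space) with $(q_\cR)_s$, because $q'$ is recovered from $\Cliff_0(q')$ precisely by the commutator recipe that defines $q_\cR$ — this is the content of the "It also follows that one can reconstruct the quadratic form $q$ from the even Clifford algebra" remark. Therefore the two algebra structures on $\cR_s$ agree, and by reducedness they agree globally.

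Finally, for the bijection statement: the assignment $q \mapsto \Cliff_0(q)$ lands in pointwise Clifford algebras by~\eqref{eq:cl0}, and two quadratic forms give isomorphic (decomposition-respecting) Clifford algebras iff they differ by a twist — one direction by Remark~\ref{rem:normalized-twist}, the other by the reconstruction just used. So $\cR \mapsto q_\cR$ (valued in normalized forms) and $q \mapsto \Cliff_0(q)$ are mutually inverse on isomorphism classes, giving the claimed bijection with forms-up-to-twist. \textbf{The main obstacle} I anticipate is not any single deep step but the careful fiber-to-global transfer together with the twist/normalization bookkeeping: making precise the canonical isomorphism $\wedge^2(\cR^0)\otimes\det(\cR^0)^\vee \cong \cR^0$, checking the symmetry of the constructed tensor intrinsically (rather than only in a basis as in Example~\ref{ex:clifford}), and confirming that the reconstruction of $q'$ from $\cR_s$ is canonical enough that equality of algebra structures really does follow pointwise. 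Reducedness of $S$ is used exactly to promote these pointwise equalities of maps of locally free sheaves to global ones.
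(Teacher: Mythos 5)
Your proposal is correct and follows essentially the same route as the paper's proof: define $q_\cR$ via the commutator map $\wedge^2\cR^0 \cong \det(\cR^0)\otimes(\cR^0)^\vee \to \cR^0$, use Example~\ref{ex:clifford} plus reducedness of $S$ to establish symmetry and to promote the fiberwise agreement of the two multiplications on $\cO_S\oplus\cR^0$ to a global algebra isomorphism, and handle the bijection via the normalized-twist computation of Remark~\ref{rem:normalized-twist}. The twist bookkeeping you flag as the main obstacle is resolved in the paper exactly as you anticipate, by observing that for $\cR=\Cliff_0(q)$ the form $q_\cR$ is the normalized twist of $q$.
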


\begin{proof}
Let~$\cR = \cO_S \oplus \cR^0$ be a pointwise Clifford algebra.
Consider the commutator map 
\begin{equation*}
\det(\cR^0) \otimes ({\cR^0})^\vee \cong \wedge^2\cR^0 \xrightarrow{\ [-,-]\ } \cR^0.
\end{equation*}
Since for each geometric point~$s \in S$ the algebra~$\cR_s$ is isomorphic to an even Clifford algebra 
and~$\cR_s^0$ corresponds to the second summand in~\eqref{eq:cl0},
the computation of Example~\ref{ex:clifford} shows that the induced map 
\begin{equation*}
q_\cR \colon \det(\cR^0) \to \cR^0 \otimes \cR^0
\end{equation*}
is symmetric at every geometric point, i.e., 
the composition of~$q_\cR$ with the projection~$\cR^0 \otimes \cR^0 \to \wedge^2(\cR^0)$
vanishes at every geometric point of~$S$.
Since~$S$ is reduced and~$\cR^0$ is locally free, 
it follows that~$q_\cR$ factors as~$\det(\cR^0) \to \Sym^2(\cR^0)$,
hence defines a quadratic form on the vector bundle~$\cE \coloneqq ({\cR^0})^\vee$ with~\mbox{$\cL \coloneqq \det(\cR^0)$}.
It remains to note that the isomorphism of sheaves
\begin{equation*}
\Cliff_0(({\cR^0})^\vee, \det(\cR^0), q_\cR) = 
\cO_S \oplus (\wedge^2({\cR^0})^\vee \otimes \det(\cR^0)) \cong 
\cO_S \oplus \cR^0 = 
\cR
\end{equation*}
is compatible with the Clifford multiplication of the left side and the multiplication of the right side 
at every geometric point~$s \in S$,
hence it is an isomorphism of $\cO_S$-algebras (again, because~$S$ is reduced).

It remains to note that if~$q$ is a nowhere vanishing quadratic form 
and~$\cR \coloneqq \Cliff_0(q)$ is its even Clifford algebra 
then~$\cR^0 = \wedge^2\cE \otimes \cL \cong \cE^\vee \otimes \det(\cE) \otimes \cL$   
and the quadratic form
\begin{equation*}
q_\cR \colon 
\det(\cE)^{\otimes 2} \otimes \cL^{\otimes 3} \cong 
\det(\cR_0) \to 
\Sym^2(\cR_0) \cong
\Sym^2(\cE^\vee \otimes \det(\cE) \otimes \cL)
\end{equation*}
is isomorphic to the normalized twist of~$q$ (see Remark~\ref{rem:normalized-twist}).
Thus, the maps~$q \mapsto \Cliff_0(q)$ and~$\cR \mapsto q_\cR$ give the required bijections.
\end{proof}

\subsection{The kernel category}

Let~$f \colon X \to S$ be a conic bundle.
The kernel category was defined in~\eqref{eq:def-kernel};
it is an admissible triangulated subcategory in~$\Db(X)$.
Similarly, 
\begin{equation*}
\Ker^\perf(f_*) = \Ker^\perf(X/S) \coloneqq \{ \cG \in \Dp(X) \mid f_*(\cG) = 0 \} = \Ker(f_*) \cap \Dp(X)
\end{equation*}
denotes the perfect part of the kernel category.

We briefly review some properties of~$\Ker(f_*)$.

\begin{lemma}
\label{lem:ker-base-change}
The kernel category~$\Ker(f_*) \subset \Db(X)$ of a conic bundle~$f \colon X \to S$ is $S$-linear, i.e.,
\begin{equation*}
\Ker(f_*) \otimes f^*\cH \subset \Ker(f_*)
\qquad\text{and}\qquad
\Ker^\perf(f_*) \otimes f^*\cH \subset \Ker^\perf(f_*)
\qquad\text{for all~$\cH \in \Dp(S)$.}
\end{equation*}
Moreover, if~$\phi \colon S' \to S$ is any morphism 
and~$f' \colon X' \coloneqq X \times_S S' \to S'$ is the base change of~$X \to S$ then
\begin{equation}
\label{eq:ker-bc}
\phi_{X*}(\Ker(f'_*)) \subset \Ker(f_*)
\qquad\text{and}\qquad
\phi_X^*(\Ker^\perf(f_*)) \subset \Ker^\perf(f'_*),
\end{equation}
where~$\phi_X \colon X' \to X$ is the morphism induced by~$\phi$.
In particular, if~$\cG \in \Ker^\perf(f_*)$ then 
\begin{equation}
\label{eq:hb-ker}
\rH^\bullet(X_s, \cG\vert_{X_s}) = 0
\end{equation}
for each geometric point~$s \in S$.
\end{lemma}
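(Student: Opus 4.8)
The plan is to deduce $S$-linearity of $\Ker(f_*)$ directly from the projection formula, and then obtain the base-change statements from flat base change together with the compatibility of the projection formula with pullback. Finally, the vanishing~\eqref{eq:hb-ker} will be the special case of~\eqref{eq:ker-bc} where $S' = \Spec \kk(s)$ is a geometric point.

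First I would prove the $S$-linearity. Let $\cG \in \Ker(f_*)$ and $\cH \in \Dp(S)$. By the projection formula (valid since $\cH$ is perfect, hence $f^*\cH$ is perfect) we have $f_*(\cG \otimes f^*\cH) \cong f_*(\cG) \otimes \cH = 0$, so $\cG \otimes f^*\cH \in \Ker(f_*)$. For the perfect part, one additionally notes that $\cG \in \Dp(X)$ and $f^*\cH \in \Dp(X)$ imply $\cG \otimes f^*\cH \in \Dp(X)$, so it lands in $\Ker^\perf(f_*)$. This step is essentially formal.

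Next I would treat the base-change inclusions~\eqref{eq:ker-bc}. Since $f$ is flat, the square defining $X' = X \times_S S'$ is Tor-independent, so flat base change gives $\phi^* f_* \cong f'_* \tilde\phi^*$ and (on the appropriate bounded-below categories) $f_* \tilde\phi_* \cong \phi_* f'_*$. Hence if $\cG' \in \Ker(f'_*)$ then $f_*\tilde\phi_*(\cG') \cong \phi_* f'_*(\cG') = 0$, giving $\tilde\phi_*(\cG') \in \Ker(f_*)$; one should check $\tilde\phi_*(\cG') \in \Db(X)$, which follows because $f$ (hence $\tilde\phi$) is projective and $\cG'$ is bounded with coherent cohomology. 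Dually, if $\cG \in \Ker^\perf(f_*)$ then $f'_*\tilde\phi^*(\cG) \cong \phi^* f_*(\cG) = 0$, and $\tilde\phi^*$ preserves perfectness, so $\tilde\phi^*(\cG) \in \Ker^\perf(f'_*)$. The one point requiring a little care is the unbounded-below direction: $\tilde\phi_*$ need not preserve $\Db$ in general, but here $f$ has finite Tor-dimension and relative dimension $1$, so pushforward has bounded cohomological amplitude and the claim is fine; I would state this as a consequence of properness plus flatness of $f$.

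Finally, for~\eqref{eq:hb-ker}, apply the second inclusion of~\eqref{eq:ker-bc} with $S' = \Spec\kk(s) \to S$ the geometric point and $X' = X_s$: for $\cG \in \Ker^\perf(f_*)$ we get $\tilde\phi^*(\cG) = \cG|_{X_s} \in \Ker^\perf(f'_*)$, i.e. $f'_*(\cG|_{X_s}) = 0$; but $f' \colon X_s \to \Spec\kk(s)$ is the structure morphism, so $f'_*(\cG|_{X_s}) = \rH^\bullet(X_s, \cG|_{X_s})$, whence the vanishing. The main (mild) obstacle throughout is bookkeeping about boundedness of derived pushforward along $\tilde\phi$; everything else is the projection formula and flat base change.
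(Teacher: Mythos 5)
Your proposal is correct and follows essentially the same route as the paper: the projection formula for $S$-linearity, the flat base change isomorphisms $f_*\circ\tilde\phi_*\cong\phi_*\circ f'_*$ and $f'_*\circ\tilde\phi^*\cong\phi^*\circ f_*$ for~\eqref{eq:ker-bc}, and the specialization $S'=\{s\}$ for~\eqref{eq:hb-ker}. The extra bookkeeping you add about boundedness of $\tilde\phi_*$ is harmless and correct but not needed beyond what the paper already takes for granted.
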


\begin{proof}
The $S$-linear property follows immediately 
from the projection formula~$f_*(\cG \otimes f^*\cH) \cong f_*(\cG) \otimes \cH$.
Furthermore, $f_* \circ \phi_{X*} \cong \phi_* \circ f'_*$ and since~$f$ is flat, we have
base change isomorphisms
\begin{equation*}
f'_* \circ \phi_X^* \cong \phi^* \circ f_* \colon \Dp(X) \to \Dp(S'),
\end{equation*}
and~\eqref{eq:ker-bc} follows.
Applying the second inclusion for~$S' = \{s\}$, we obtain~\eqref{eq:hb-ker}.
\end{proof}

\begin{lemma}
\label{lem:ker-t-structure}
The subcategory~$\Ker(f_*) \subset \Db(X)$ is closed under the truncation functors of the standard t-structure on~$\Db(X)$.
In other words, $f_*(\cG) = 0$ if and only if~$f_*(\cH^i(\cG)) = 0$ for all~$i \in \ZZ$,
where~$\cH^i(\cG)$ is the $i$-th cohomology sheaf of~$\cG$.
Moreover, the subcategories
\begin{equation*}
\Ker(f_*)^{\le 0} \coloneqq \Ker(f_*) \cap \bD(X)^{\le 0}
\qquad\text{and}\qquad 
\Ker(f_*)^{\ge 0} \coloneqq \Ker(f_*) \cap \bD(X)^{\ge 0}
\end{equation*}
define a t-structure on~$\Ker(f_*)$ such that the embedding functor~$\Ker(f_*) \hookrightarrow \Db(X)$ is t-exact.
\end{lemma}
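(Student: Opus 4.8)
The plan is to reduce everything to one cohomological statement, namely the claimed equivalence
\[
f_*(\cG) = 0 \iff f_*(\cH^i(\cG)) = 0 \text{ for all } i\in\ZZ,
\]
and then derive the t-structure assertion from a general principle about truncation-closed subcategories. The crucial input is that, since $f$ is proper of relative dimension $1$, Grothendieck vanishing gives $R^pf_* = 0$ on $\Coh(X)$ for $p\notin\{0,1\}$, so the functor $f_* = Rf_*$ has cohomological amplitude $[0,1]$. For the implication ``$\Leftarrow$'' I would argue by dévissage: if $f_*(\cH^i(\cG)) = 0$ for all $i$, then applying $f_*$ to the truncation triangles $\tau_{\le n-1}\cG \to \tau_{\le n}\cG \to \cH^n(\cG)[-n] \to \tau_{\le n-1}\cG[1]$ and using descending induction over the finite range where $\cG$ has nonzero cohomology yields $f_*(\tau_{\le n}\cG) = 0$, hence $f_*(\cG) = 0$. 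For ``$\Rightarrow$'' I would use the two-row hypercohomology spectral sequence $E_2^{p,q} = R^pf_*(\cH^q(\cG)) \Rightarrow \cH^{p+q}(f_*\cG)$; the amplitude bound kills all differentials, so it degenerates at $E_2$ and produces short exact sequences $0 \to R^1f_*(\cH^{n-1}(\cG)) \to \cH^n(f_*\cG) \to R^0f_*(\cH^n(\cG)) \to 0$ for every $n$. If $f_*(\cG) = 0$ these force $R^0f_*(\cH^n(\cG)) = R^1f_*(\cH^n(\cG)) = 0$ for all $n$, i.e. $f_*(\cH^n(\cG)) = 0$.

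Granting this characterization, closedness of $\Ker(f_*)$ under the standard truncation functors is immediate: for $\cG\in\Ker(f_*)$ the cohomology sheaves of $\tau_{\le n}\cG$ are $\cH^i(\cG)$ for $i\le n$ and zero otherwise, all with vanishing $f_*$ by the characterization, so $\tau_{\le n}\cG\in\Ker(f_*)$; the same applies to $\tau_{\ge n}\cG$. This gives the first two sentences of the lemma.

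For the t-structure on $\Ker(f_*)$ I would invoke the standard fact that a full triangulated subcategory $\cT\subset\bD$ stable under $\tau_{\le 0}^{\bD}$ and $\tau_{\ge 1}^{\bD}$ inherits a t-structure with $\cT^{\le 0} = \cT\cap\bD^{\le 0}$ and $\cT^{\ge 0} = \cT\cap\bD^{\ge 0}$, and to include the (short) verification: the semiorthogonality $\Hom(\cT^{\le 0},\cT^{\ge 1}) = 0$ and the shift inclusions are inherited from $\bD$, while for any $\cG\in\cT$ the triangle $\tau_{\le 0}^{\bD}\cG \to \cG \to \tau_{\ge 1}^{\bD}\cG \to \tau_{\le 0}^{\bD}\cG[1]$ lies entirely in $\cT$ by hypothesis and is the required decomposition triangle. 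By uniqueness of the decomposition triangle, the truncation (hence cohomology) functors of $\cT$ are computed by those of $\bD$, so the inclusion $\Ker(f_*)\hookrightarrow\Db(X)$ commutes with truncations and is in particular t-exact. Applying this with $\bD = \Db(X)$ and $\cT = \Ker(f_*)$ completes the argument.

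I expect the only genuinely load-bearing step to be the ``$\Rightarrow$'' direction of the cohomological characterization: it is exactly there that the relative dimension $1$ hypothesis enters (via the amplitude bound and the ensuing degeneration of the spectral sequence), and without it a complex with $f_*(\cG) = 0$ could well have $f_*(\cH^i(\cG))\ne 0$. The remaining steps are formal.
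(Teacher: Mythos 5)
Your proposal is correct and follows essentially the same route as the paper: the paper's proof also rests on the degeneration of the two-row hypercohomology spectral sequence $\bR^jf_*(\cH^i(\cG)) \Rightarrow \cH^{i+j}(f_*\cG)$ (valid because the fibers of $f$ have dimension $1$), from which the equivalence $f_*(\cG)=0 \iff f_*(\cH^i(\cG))=0$ for all $i$ and then the induced t-structure follow formally. You merely spell out the formal steps (dévissage for one direction, the standard criterion for a truncation-closed subcategory to inherit a t-structure) that the paper leaves implicit.
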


\begin{proof}
To prove that~$\Ker(f_*)$ is closed under truncations, 
note that the dimension of the fibers of~$f$ equals~$1$, hence the hypercohomology spectral sequence
\begin{equation*}
\bR^jf_*(\cH^i(\cG)) \Rightarrow \cH^{i+j}(f_*\cG)
\end{equation*}
degenerates in the second term and the vanishing of~$f_*\cG$ is equivalent to the vanishing of~$f_*(\cH^i(\cG))$ for all~$i$.
This in turn implies that the truncation functors of the standard t-structure preserve~$\Ker(f_*)$
and that the standard t-structure induces a t-structure on~$\Ker(f_*)$.
\end{proof}

By~\cite{K08} the kernel category is a component of the $S$-linear semiorthogonal decomposition
\begin{equation}
\label{eq:dbx}
\Db(X) = \langle \Ker(f_*), f^*(\Db(S)) \rangle
\end{equation}
and is equivalent to the bounded derived category~$\Db(S, \Cliff_0(q))$ 
of coherent right modules over~$\Cliff_0(q)$.
To explain the construction of this equivalence, recall from~\cite[\S3.3]{K08} that
the even Clifford algebra~$\Cliff_0(q)$ comes with a sequence of natural $\Cliff_0(q)$-bimodules:
\begin{equation}
\label{eq:clifford-all}
\Cliff_1(q) \coloneqq \cE \oplus (\wedge^3\cE \otimes \cL),
\qquad
\Cliff_{2i}(q) \coloneqq \Cliff_0(q) \otimes \cL^{-i},
\qquad 
\Cliff_{2i+1}(q) \coloneqq \Cliff_1(q) \otimes \cL^{-i},
\end{equation}
where the bimodule structure 
is defined by the Clifford multiplication~$\Cliff_i(q) \otimes \Cliff_j(q) \to \Cliff_{i+j}(q)$
defined analogously to~\eqref{eq:cl0-multiplication}, see~\cite[\S3]{K14}.
Moreover, by~\cite[Lemma~3.8 and Corollary~3.9]{K08} we have
\begin{equation}
\label{eq:cli-clj}
\Cliff_i(q) \otimes_{\Cliff_0(q)} \Cliff_j(q) \cong \Cliff_{i+j}(q)
\qquad\text{and}\qquad 
\cRHom_{\Cliff_0(q)}(\Cliff_i(q), \Cliff_j(q)) \cong \Cliff_{j-i}(q).
\end{equation} 
In particular, all~$\Cliff_i(q)$ are locally projective left or right modules over~$\Cliff_0(q)$.

We denote by~$\io \colon X \hookrightarrow \P_S(\cE)$ the embedding
and by~$p \colon \P_S(\cE) \to S$ the projection, so that~$f = p \circ \io$.

\begin{theorem}[{\cite[Lemmas~4.5 and~4.7, Proposition~4.9, and Theorem~4.2]{K08}}]
\label{thm:k08}
For each~$i \in \ZZ$ there is a left $f^*\Cliff_0(q)$-module~$\cF^i_{X/S}$ on~$X$,
which is locally free of rank~$2$ over~$\cO_X$,
and an exact sequence 
\begin{equation}
\label{eq:cf-io}
0 \to 
p^*\Cliff_{i-1}(q) \otimes \cO_{\P_S(\cE)/S}(-2) \xrightarrow{\ \ \varphi_i\ \ } 
p^*\Cliff_i(q) \otimes \cO_{\P_S(\cE)/S}(-1) \xrightarrow{\qquad} 
\io_*\cF^i_{X/S} \to 0
\end{equation}
of left $p^*\Cliff_0(q)$-modules,
where the morphism~$\varphi_i$ 
is induced by the embedding~\mbox{$\cO_{X/S}(-1) \hookrightarrow p^*\cE \hookrightarrow p^*\Cliff_1(q)$}
and the Clifford multiplication~$\Cliff_{i-1}(q) \otimes \Cliff_1(q) \to \Cliff_i(q)$.
Moreover, the functor
\begin{equation}
\label{eq:phi-i}
\Phi_{\cF^i_{X/S}} \colon \Db(S, \Cliff_0(q)) \to \Db(X),
\qquad 
\cM \mapsto f^*\cM \otimes_{f^*\Cliff_0(q)} \cF^i_{X/S},
\end{equation}
is $S$-linear, t-exact, fully faithful and defines an equivalence~$\Db(S, \Cliff_0(q)) \simeq \Ker(f_*) \subset \Db(X)$,
while
\begin{equation}
\label{eq:phi-i-shriek}
\Phi^!_{\cF^i_{X/S}} \colon \Db(X) \to \Db(S, \Cliff_0(q)),
\qquad 
\cG \mapsto f_*\cRHom(\cF^i_{X/S}, \cG)
\end{equation}
is its right adjoint.
In particular, the restriction~$\Phi^!_{\cF^i_{X/S}}\vert_{\Ker(f_*)}$ is the inverse of~$\Phi_{\cF^i_{X/S}}$.
\end{theorem}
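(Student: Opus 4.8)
\emph{Approach.} The plan is to follow the route that is standard for quadric bundles: realize the functor $\Phi_{\cF^i_{X/S}}$ explicitly through the resolution \eqref{eq:cf-io}, prove it is $S$-linear, t-exact and fully faithful by a relative tilting argument, and deduce that it is essentially surjective onto $\Ker(f_*)$ by showing that its right adjoint is conservative there. First I would construct the bundles $\cF^i_{X/S}$. Recall that $\io\colon X\hookrightarrow\P_S(\cE)$ is the zero locus of the section of $p^*\cL^\vee\otimes\cO_{\P_S(\cE)/S}(2)$ attached to $q$; the composition $\cO_{\P_S(\cE)/S}(-1)\hookrightarrow p^*\cE\hookrightarrow p^*\Cliff_1(q)$ followed by the Clifford multiplication $\Cliff_{i-1}(q)\otimes\Cliff_1(q)\to\Cliff_i(q)$ defines the maps $\varphi_i$, and left multiplication on $\Cliff_{i-1}(q)$ and $\Cliff_i(q)$ makes everything $p^*\Cliff_0(q)$-linear. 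Using $\Cliff_{i+1}(q)\cong\Cliff_{i-1}(q)\otimes\cL^{-1}$ (see \eqref{eq:clifford-all}) and the Clifford identity $v\cdot v=q(v)\mathbf 1$, one checks that $\varphi_{i+1}\circ\varphi_i$ equals, up to the evident twist, multiplication by the defining section of $X$; hence $\{\varphi_i\}$ is a relative matrix factorization of $q$ and $\Coker(\varphi_i)$ is scheme-theoretically supported on $X$. That $\varphi_i$ is fibrewise injective with $\Coker(\varphi_i)=\io_*\cF^i_{X/S}$ for a rank-$2$ locally free sheaf $\cF^i_{X/S}$ on $X$ carrying the asserted left $f^*\Cliff_0(q)$-module structure is a statement about geometric fibres, where by flatness of $f$ and cohomology-and-base-change it reduces to $S=\Spec\kk$ and the classical description of the spinor sheaf of a plane conic; note also $\cF^{i+2}_{X/S}\cong\cF^i_{X/S}\otimes f^*\cL^{-1}$, so only $i\in\{0,1\}$ need be examined.

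Granting this, $S$-linearity of $\Phi_{\cF^i_{X/S}}\colon\cM\mapsto f^*\cM\otimes_{f^*\Cliff_0(q)}\cF^i_{X/S}$ is the projection formula; its t-exactness relative to the t-structure on $\Ker(f_*)$ of Lemma~\ref{lem:ker-t-structure} follows from $S$-flatness of $\cF^i_{X/S}$, the one-dimensionality of the fibres, and the vanishing of $H^{\ne 0}(X_s,\cF^i_{X/S}\vert_{X_s}\otimes V)$ for the relevant $\Cliff_0(q_s)$-modules $V$, which is immediate from the fibrewise picture; and the adjunction $\Phi_{\cF^i_{X/S}}\dashv\Phi^!_{\cF^i_{X/S}}$ with $\Phi^!_{\cF^i_{X/S}}=f_*\cRHom(\cF^i_{X/S},-)$ is the projection formula together with Grothendieck--Serre duality for the proper Gorenstein morphism $f$, the right $\Cliff_0(q)$-module structure on the target arising from precomposition with the left action on $\cF^i_{X/S}$. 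For full faithfulness it suffices, since $\Cliff_0(q)$ is a relative generator of $\Db(S,\Cliff_0(q))$ and $\Phi_{\cF^i_{X/S}}(\Cliff_0(q))=\cF^i_{X/S}$, to check that the unit map $\Cliff_0(q)\to f_*\cEnd_{\cO_X}(\cF^i_{X/S})$ induced by the left action is an isomorphism of $\cO_S$-algebras; both sides are locally free of rank $4$ over $\cO_S$ (the target because $\cEnd(\cF^i_{X/S})$ has no higher cohomology on any fibre, so its pushforward commutes with base change), so this is checked fibrewise, where it is the classical identification of the even Clifford algebra of a nonzero conic with the endomorphism algebra of its spinor sheaf. (Alternatively, resolve the first argument of $\cRHom(\cF^i_{X/S},\cF^i_{X/S})$ by \eqref{eq:cf-io}, push forward, and use $p_*\cO_{\P_S(\cE)/S}(-1)=p_*\cO_{\P_S(\cE)/S}(-2)=0$ with \eqref{eq:cli-clj}.)

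It remains to identify the essential image with $\Ker(f_*)$. Applying $p_*$ to \eqref{eq:cf-io} and using $p_*\cO_{\P_S(\cE)/S}(-1)=p_*\cO_{\P_S(\cE)/S}(-2)=0$ yields $f_*\cF^i_{X/S}=0$, hence $\Ima\Phi_{\cF^i_{X/S}}\subseteq\Ker(f_*)$. Since $\Phi_{\cF^i_{X/S}}$ is fully faithful with right adjoint $\Phi^!_{\cF^i_{X/S}}$, for $\cG\in\Ker(f_*)$ the counit triangle $\Phi_{\cF^i_{X/S}}\Phi^!_{\cF^i_{X/S}}\cG\to\cG\to\cG''$ has $\cG''\in\Ker(f_*)$ and $\Phi^!_{\cF^i_{X/S}}\cG''=0$, so it is enough to show that $\Phi^!_{\cF^i_{X/S}}$ is conservative on $\Ker(f_*)$. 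But if $\cG''\in\Ker(f_*)$ and $\Phi^!_{\cF^i_{X/S}}\cG''=0$ then, by flat base change, $\RGamma(X_s,\cG''\vert_{X_s})=0$ and $\RHom_{X_s}(\cF^i_{X/S}\vert_{X_s},\cG''\vert_{X_s})=0$ for every geometric point $s\in S$; since $\{\cO_{X_s},\cF^i_{X/S}\vert_{X_s}\}$ generates $\Db(X_s)$ (on a smooth fibre this is the full exceptional pair $\langle\cO_{\P^1}(-1),\cO_{\P^1}\rangle$, and the analogous statement holds on a degenerate conic), we get $\cG''\vert_{X_s}=0$ for all $s$, hence $\cG''=0$. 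Therefore $\cG\cong\Phi_{\cF^i_{X/S}}\Phi^!_{\cF^i_{X/S}}\cG\in\Ima\Phi_{\cF^i_{X/S}}$, so $\Ima\Phi_{\cF^i_{X/S}}=\Ker(f_*)$ and $\Phi^!_{\cF^i_{X/S}}\vert_{\Ker(f_*)}$ is the inverse of $\Phi_{\cF^i_{X/S}}$; in particular this recovers \eqref{eq:dbx} and \eqref{eq:ker-f}.

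\emph{Main obstacle.} The genuinely non-formal ingredient is the fibrewise analysis over the degenerate conics — a pair of lines or a double line — used twice above: in the construction of $\cF^i_{X/S}$ (injectivity of $\varphi_i$, local freeness of rank $2$ of its cokernel, and identification of the resulting $\Cliff_0(q_s)$-module with the expected, now non-split, spinor module) and in the conservativity argument ($\{\cO_{X_s},\cF^i_{X/S}\vert_{X_s}\}$ still generating $\Db(X_s)$); it also underlies the fibrewise isomorphism $\Cliff_0(q)\cong f_*\cEnd_{\cO_X}(\cF^i_{X/S})$. Once this local picture and the matrix-factorization identity $\varphi_{i+1}\circ\varphi_i=q$ are in hand, everything else is formal.
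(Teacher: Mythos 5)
This theorem is imported verbatim from \cite[Lemmas~4.5, 4.7, Proposition~4.9, Theorem~4.2]{K08} and the paper gives no proof of its own; your reconstruction follows essentially the same route as that reference (matrix factorization $\varphi_{i+1}\circ\varphi_i=q$ defining the spinor sheaves, the pushforward computation $f_*\cRHom(\cF^i,\cF^j)\cong\Cliff_{j-i}$ via \eqref{eq:cf-io} and $p_*\cO_{\P_S(\cE)/S}(-1)=p_*\cO_{\P_S(\cE)/S}(-2)=0$, and fibrewise generation for essential surjectivity), with the fibrewise analysis over degenerate conics correctly identified as the non-formal input. The only small imprecision is the justification of t-exactness of $\Phi_{\cF^i_{X/S}}$, which really comes from the resolution \eqref{eq:cf-io} together with local projectivity of the bimodules $\Cliff_i(q)$ over $\Cliff_0(q)$ (so that $\io_*\Phi(\cM)$ is the cokernel of a fibrewise-injective map of sheaves) rather than from the cohomology vanishings you cite, but this is immediate from what you have already set up.
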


We will say that a bundle~$\cF \in \Ker(f_*)$ {\sf compactly generates~$\Ker(f_*)$ over~$S$} if
\begin{equation*}
(\cF \otimes f^*\Dp(S))^\perp \cap \Ker(f_*) = 0.
\end{equation*}
When~$S = \Spec(\kk)$, this notion agrees with the usual notion of compact generation.

\begin{corollary}
\label{cor:cfi-fs}
For each~$i \in \ZZ$ we have~$\cF^i_{X/S} \cong \Phi_{\cF^0_{X/S}}(\Cliff_i(q))$ and
\begin{equation}
\label{eq:fs-end-cfi}
f_*\cRHom(\cF^i_{X/S}, \cF^j_{X/S}) \cong \Cliff_{j-i}(q).
\end{equation} 
Moreover, $\cF^i_{X/S} \in \Ker(f_*)$ and it compactly generates~$\Ker(f_*)$ over~$S$.
\end{corollary}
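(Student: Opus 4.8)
The plan is to derive the corollary entirely from Theorem~\ref{thm:k08} and the formal identities~\eqref{eq:cli-clj}; the one step with actual content is the identification~$\Phi_{\cF^i_{X/S}}(\Cliff_k(q))\cong\cF^{i+k}_{X/S}$, and once that is available everything else is bookkeeping. First I would note that~$\cF^i_{X/S}\in\Ker(f_*)$, which is immediate: $\Phi_{\cF^i_{X/S}}(\Cliff_0(q))=f^*\Cliff_0(q)\otimes_{f^*\Cliff_0(q)}\cF^i_{X/S}=\cF^i_{X/S}$, and by Theorem~\ref{thm:k08} the functor~$\Phi_{\cF^i_{X/S}}$ lands in~$\Ker(f_*)$.

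Next I would prove that~$\Phi_{\cF^i_{X/S}}(\Cliff_k(q))\cong\cF^{i+k}_{X/S}$ for all~$i,k\in\ZZ$; taking~$i=0$ this gives the first assertion of the corollary. To do so I would tensor the exact sequence~\eqref{eq:cf-io} for~$\cF^i_{X/S}$ on the left by~$p^*\Cliff_k(q)$ over~$p^*\Cliff_0(q)$. Since~$\Cliff_k(q)$ is locally projective as a right~$\Cliff_0(q)$-module (noted after~\eqref{eq:cli-clj}), so is its pullback~$p^*\Cliff_k(q)$, hence the tensored sequence remains exact; by the first isomorphism of~\eqref{eq:cli-clj} its source terms become~$p^*\Cliff_{k+i-1}(q)\otimes\cO_{\P_S(\cE)/S}(-2)$ and~$p^*\Cliff_{k+i}(q)\otimes\cO_{\P_S(\cE)/S}(-1)$, and by the projection formula for the closed immersion~$\io$ its target becomes~$\io_*\bigl(f^*\Cliff_k(q)\otimes_{f^*\Cliff_0(q)}\cF^i_{X/S}\bigr)=\io_*\Phi_{\cF^i_{X/S}}(\Cliff_k(q))$. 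The point that needs care — and the one I expect to be the main obstacle — is identifying the resulting differential with~$\varphi_{k+i}$: this should follow because~$\varphi_i$ is right multiplication by the tautological section~$\cO_{X/S}(-1)\hookrightarrow p^*\cE\hookrightarrow p^*\Cliff_1(q)$, which commutes with the left~$\Cliff_0(q)$-action being tensored in, together with the associativity of Clifford multiplication, so that left-tensoring~$\Cliff_{i-1}(q)\otimes\Cliff_1(q)\to\Cliff_i(q)$ by~$\Cliff_k(q)$ produces~$\Cliff_{k+i-1}(q)\otimes\Cliff_1(q)\to\Cliff_{k+i}(q)$. Comparing with~\eqref{eq:cf-io} for~$\cF^{k+i}_{X/S}$ then gives the isomorphism.

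The formula~\eqref{eq:fs-end-cfi} would follow at once: by the previous step~$\cF^j_{X/S}\cong\Phi_{\cF^i_{X/S}}(\Cliff_{j-i}(q))$, so using the description~\eqref{eq:phi-i-shriek} of~$\Phi^!_{\cF^i_{X/S}}$ and the fact from Theorem~\ref{thm:k08} that its restriction to~$\Ker(f_*)$ is inverse to~$\Phi_{\cF^i_{X/S}}$, one gets
\[
f_*\cRHom(\cF^i_{X/S},\cF^j_{X/S})=\Phi^!_{\cF^i_{X/S}}\bigl(\Phi_{\cF^i_{X/S}}(\Cliff_{j-i}(q))\bigr)\cong\Cliff_{j-i}(q).
\]

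Finally, for compact generation of~$\Ker(f_*)$ over~$S$ I would argue as follows. The functor~$\Phi_{\cF^i_{X/S}}$ is an~$S$-linear equivalence~$\Db(S,\Cliff_0(q))\xrightarrow{\ \sim\ }\Ker(f_*)$ satisfying~$\Phi_{\cF^i_{X/S}}(\cM\otimes_{\cO_S}\cH)\cong\Phi_{\cF^i_{X/S}}(\cM)\otimes f^*\cH$ and sending~$\Cliff_0(q)$ to~$\cF^i_{X/S}$; hence it carries~$\Cliff_0(q)\otimes_{\cO_S}\Dp(S)$ to~$\cF^i_{X/S}\otimes f^*\Dp(S)$ and identifies~$(\cF^i_{X/S}\otimes f^*\Dp(S))^\perp\cap\Ker(f_*)$ with~$(\Cliff_0(q)\otimes_{\cO_S}\Dp(S))^\perp$ inside~$\Db(S,\Cliff_0(q))$. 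By the free--forgetful adjunction, an object in the latter orthogonal is right orthogonal in~$\Db(S)$ to every~$\cH\in\Dp(S)$, hence is zero since perfect complexes have vanishing right orthogonal in~$\Db(S)$. This would complete the proof.
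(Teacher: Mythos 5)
Your proposal is correct and follows essentially the same route as the paper: the core step in both is tensoring the resolution~\eqref{eq:cf-io} by~$p^*\Cliff_i(q)$ over~$p^*\Cliff_0(q)$ and comparing with~\eqref{eq:cf-io} for~$\cF^i_{X/S}$, after which~\eqref{eq:fs-end-cfi} and compact generation follow from Theorem~\ref{thm:k08} and~\eqref{eq:cli-clj}. Your extra care in identifying the differential with~$\varphi_{k+i}$ and your explicit adjunction arguments for the Hom formula and for compact generation are just more detailed versions of steps the paper treats as immediate.
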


\begin{proof}
First, using the projection formula and the definition of~$\Phi_{\cF^0_{X/S}}$ we obtain
\begin{equation*}
\io_*\Phi_{\cF^0_{X/S}}(\Cliff_i(q)) \cong
\io_*(f^*\Cliff_i(q) \otimes_{f^*\Cliff_0(q)} \cF^0_{X/S}) \cong
p^*\Cliff_i(q) \otimes_{p^*\Cliff_0(q)} \io_*\cF^0_{X/S}.
\end{equation*}
Next, tensoring the resolution~\eqref{eq:cf-io} of~$\io_*\cF^0_{X/S}$ by~$p^*\Cliff_i(q)$,
using~\eqref{eq:cli-clj}, and comparing the result with~\eqref{eq:cf-io}
we see that~$\io_*\Phi_{\cF^0_{X/S}}(\Cliff_i(q)) \cong \io_*\cF^i_{X/S}$,
and the first statement follows.
In particular, $\cF^i_{X/S} \in \Ker(f_*)$.
Since~$\Phi_{\cF^0_{X/S}}$ is an $S$-linear equivalence~$\Db(S, \Cliff_0(q)) \to \Ker(f_*)$, it also follows that
\begin{equation*}
f_*\cRHom(\cF^i_{X/S}, \cF^j_{X/S}) \cong 
\cRHom_{\Cliff_0(q)}(\Cliff_i(q), \Cliff_j(q)) \cong
\Cliff_{j-i}(q),
\end{equation*}
where the second isomorphism is~\eqref{eq:cli-clj}.
Compact generation follows immediately from Theorem~\ref{thm:k08}
because~$\Cliff_0(q)$ is a compact generator for~$\Db(S, \Cliff_0(q))$ over~$S$.
\end{proof}

We also note that~\eqref{eq:dbx} implies a similar semiorthogonal decomposition for the perfect categories.

\begin{corollary}
\label{cor:ker-perf}
If~$f \colon X \to S$ is a conic bundle, there is a semiorthogonal decomposition
\begin{equation}
\label{eq:ker-perf}
\Dp(X) = \langle \Ker^\perf(f_*), f^*(\Dp(S)) \rangle.
\end{equation} 
\end{corollary}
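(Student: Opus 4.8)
The plan is to deduce~\eqref{eq:ker-perf} directly from the semiorthogonal decomposition~\eqref{eq:dbx} by checking that its decomposition triangles restrict to perfect complexes. The structural input is that both $f^*$ and $f_*$ preserve perfectness. For $f^*$ this is clear: it is exact (since $f$ is flat) and sends $\cO_S$ to $\cO_X$, so $f^*(\Dp(S)) \subseteq \Dp(X)$. For $f_*$ it follows from the fact that a projective flat morphism of relative dimension~$1$ is a proper perfect morphism — it is pseudo-coherent (locally of finite type over a Noetherian base) and of finite Tor-dimension (being flat) — hence $f_*$ carries $\Dp(X)$ into $\Dp(S)$. In particular, for $\cG \in \Dp(X)$ the object $f^*f_*\cG$ again lies in $f^*(\Dp(S)) \subseteq \Dp(X)$.

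Next I would recall that for~\eqref{eq:dbx} the decomposition triangle of an object $\cG \in \Db(X)$ is
\[
f^*f_*\cG \xrightarrow{\ \ \eps_\cG\ \ } \cG \longrightarrow C_\cG \longrightarrow f^*f_*\cG[1],
\]
induced by the counit $\eps$ of the adjunction $f^* \dashv f_*$, with $f^*f_*\cG \in f^*(\Db(S))$ and $C_\cG \in \Ker(f_*)$: indeed $f^*$ is fully faithful (projection formula and $f_*\cO_X \cong \cO_S$), so $f_*\eps_\cG$ is an isomorphism, and applying $f_*$ to the triangle gives $f_*C_\cG = 0$. Now if $\cG$ is perfect, then $f^*f_*\cG$ is perfect by the first paragraph, so $C_\cG = \Cone(\eps_\cG)$ is the cone of a morphism between perfect complexes and is therefore perfect; together with $C_\cG \in \Ker(f_*)$ this gives $C_\cG \in \Ker^\perf(f_*) = \Ker(f_*) \cap \Dp(X)$. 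Thus every $\cG \in \Dp(X)$ sits in a triangle with one term in $f^*(\Dp(S))$ and the other in $\Ker^\perf(f_*)$.

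Finally, semiorthogonality is inherited for free: for $\cB \in \Dp(S) \subseteq \Db(S)$ and $\cG' \in \Ker^\perf(f_*) \subseteq \Ker(f_*)$ one has $\Hom_{\Dp(X)}(f^*\cB, \cG') = \Hom_{\Db(X)}(f^*\cB, \cG') = 0$ by the semiorthogonality in~\eqref{eq:dbx} (the inclusion $\Dp(X) \subseteq \Db(X)$ being full). Since $\Ker^\perf(f_*) = \Ker(f_*) \cap \Dp(X)$ and $f^*(\Dp(S))$ are triangulated subcategories of $\Dp(X)$, the three conditions defining a semiorthogonal decomposition are verified, which proves~\eqref{eq:ker-perf}. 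The only step that is not purely formal is the claim that $f_*$ preserves perfect complexes — relevant here because $S$ need not be regular — but this is a standard property of proper flat morphisms, so I do not expect a genuine obstacle.
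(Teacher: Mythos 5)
Your proof is correct, but it takes a more hands-on route than the paper. The paper's proof localizes to quasiprojective~$S$, observes that since~$f$ is proper and Gorenstein the functor~$f^*$ has both adjoints on the bounded category, so that~\eqref{eq:dbx} is a \emph{strong} semiorthogonal decomposition, and then cites~\cite[Proposition~4.1]{K11}, which says precisely that a strong semiorthogonal decomposition of~$\Db(X)$ restricts to one of~$\Dp(X)$. You instead verify by hand the content hidden in that citation for this particular decomposition: the projection functors are~$\cG \mapsto f^*f_*\cG$ and~$\cG \mapsto \Cone(\eps_\cG)$, and they preserve perfectness because~$Rf_*$ sends~$\Dp(X)$ to~$\Dp(S)$ for a proper flat finitely presented morphism (SGA~6-type result), while the cone of a map of perfect complexes is perfect. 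Your identification of the decomposition triangle via the counit, and the deduction that~$f_*\eps_\cG$ is invertible from full faithfulness of~$f^*$, are both right. What your argument buys is self-containedness — no reduction to quasiprojective~$S$ and no appeal to the base-change machinery of~\cite{K11} — at the cost of invoking the preservation of perfect complexes under proper flat pushforward as an external input; the paper's argument is shorter and also records the slightly stronger fact that the restricted decomposition is again strong. Both are valid proofs of the stated corollary.
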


\begin{proof}
The claim is local over~$S$, so we may assume~$S$ quasiprojective.
Moreover, since~$f$ is proper and Gorenstein, the functor~$f^*$ has both adjoints on the bounded category,
hence~\eqref{eq:dbx} is a strong semiorthogonal decomposition in the sense of~\cite[Definition~2.6]{K11}.
Thus, we can apply~\cite[Proposition~4.1]{K11}.
\end{proof}

\subsection{Abstract spinor bundles}

Recall Definition~\ref{def:abs} from the introduction.
The following proposition shows what the restrictions of an abstract spinor bundle 
to the geometric fibers of a conic bundle look like.
In the statement we relax the conditions defining an abstract spinor bundle.

\begin{proposition}
\label{prop:asb-fibers}
Let~$X$ be a conic over an algebraically closed field~$\kk$ 
and let~$\cF$ be an acyclic vector bundle of rank~$2$ on~$X$ such that either
\begin{enumerate}[label={\textup{(\roman*)}}]
\item 
\label{it:asb-c1}
$-\rc_1(\cF)$ is ample, or
\item
\label{it:asb-gen}
$\cF$ compactly generates~$\Ker(X/\kk)$.
\end{enumerate}
Then~$\det(\cF) \cong \omega_X$ and the isomorphism class of~$\cF$ is uniquely determined.
Explicitly,
\begin{alenumerate}
\item 
\label{it:r3}
if~$X$ is smooth, so that~$X \cong \P^1$, then~$\cF \cong \cO_X(-1)^{\oplus 2}$,
\item 
\label{it:r2}
if~$X$ is reducible, so that~$X = X' \cup_{x_0} X'' \cong \P^1 \cup_{x_0} \P^1$, then there is an exact sequence
\begin{equation}
\label{eq:asb-reducible}
0 \to \cF \to (\cO_{X'} \oplus \cO_{X'}(-1)) \oplus (\cO_{X''} \oplus \cO_{X''}(-1)) \to \cO_{x_0}^{\oplus 2} \to 0,
\end{equation}
where the second arrow is given by the matrix~$\left(\begin{smallmatrix} 1 & 0 & 0 & 1 \\ 0 & 1 & 1 & 0 \end{smallmatrix}\right)$,
\item 
\label{it:r1}
if~$X$ is non-reduced, so that~$X_{\mathrm{red}} \cong \P^1$, then there is an exact sequence
\begin{equation}
\label{eq:asb-nonreduced}
0 \to 
\upxi_*(\cO_{X_{\mathrm{red}}}(-1) \oplus \cO_{X_{\mathrm{red}}}(-2)) \to 
\cF \to
\upxi_*(\cO_{X_{\mathrm{red}}} \oplus \cO_{X_{\mathrm{red}}}(-1)) \to 
0,
\end{equation}
where~$\upxi \colon X_{\mathrm{red}} \hookrightarrow X$ is the natural embedding,
and the connecting morphisms
\begin{align}
\label{eq:connecting-io}
\bL_1\upxi^*\upxi_*(\cO_{X_{\mathrm{red}}} \oplus \cO_{X_{\mathrm{red}}}(-1)) &\to 
\bL_0\upxi^*\upxi_*(\cO_{X_{\mathrm{red}}}(-1) \oplus \cO_{X_{\mathrm{red}}}(-2)),
\\
\label{eq:connecting-h}
\rH^0(X_{\mathrm{red}}, \cO_{X_{\mathrm{red}}} \oplus \cO_{X_{\mathrm{red}}}(-1)) &\to 
\rH^1(X_{\mathrm{red}}, \cO_{X_{\mathrm{red}}}(-1) \oplus \cO_{X_{\mathrm{red}}}(-2))
\end{align} 
are isomorphisms.
\end{alenumerate}
\end{proposition}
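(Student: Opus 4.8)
The plan is to analyze the three isomorphism types of a plane conic $X$ over $\kk = \bar\kk$ separately, but first to extract as much as possible from the two hypotheses simultaneously. A rank-$2$ bundle $\cF$ on a plane conic $X$ (which is a Gorenstein curve of arithmetic genus $0$ with $\omega_X$ of degree $-2$) is determined by its restrictions to the irreducible/reduced components together with the gluing data, so the strategy is: (1) pin down $\det(\cF)$; (2) pin down the restriction of $\cF$ to each component; (3) pin down the gluing. For step (1), I would first treat case~\ref{it:asb-c1}: since $\cF$ is acyclic, $\chi(X,\cF) = 0$, i.e.\ $\deg\cF + 2\rank(\cF)\chi(\cO_X)/2 = \deg(\det\cF) + 2 = 0$ by Riemann--Roch on the Gorenstein curve $X$ (using $\chi(\cO_X)=1$), so $\deg(\det\cF) = -2 = \deg\omega_X$; since $-\rc_1(\cF)$ is ample and $\Pic(X)/(\text{numerical})$ is trivial in each case (or in the reducible case, ampleness forces the bidegree to be $(-1,-1)$, matching $\omega_X$), we get $\det(\cF)\cong\omega_X$. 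For case~\ref{it:asb-gen}, I would instead invoke Theorem~\ref{thm:k08}/Corollary~\ref{cor:cfi-fs}: over $S=\Spec\kk$ the category $\Ker(X/\kk)$ has an explicit tilting bundle $\cF^0_{X/\kk}$ with $\End = \Cliff_0(q)$, so any compact generator $\cF$ of rank $2$ is, up to the appropriate normalization, forced to agree with $\cF^0_{X/\kk}$; in particular $\det\cF\cong\det\cF^0_{X/\kk}\cong\omega_X$.

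For the smooth case~\ref{it:r3}, $X\cong\P^1$, a rank-$2$ bundle splits as $\cO(a)\oplus\cO(b)$ with $a+b=-2$, and acyclicity ($H^0=H^1=0$) forces $a=b=-1$. For the reducible case~\ref{it:r2}, $X = X'\cup_{x_0}X''$ with each $X'\cong X''\cong\P^1$: restrict $\cF$ to $X'$ and $X''$; each restriction is a sum of two line bundles of total degree determined by $\det\cF\cong\omega_X$, and the acyclicity of $\cF$ on $X$ together with the Mayer--Vietoris sequence relating $H^\bullet(X,\cF)$ to $H^\bullet(X',\cF|_{X'})\oplus H^\bullet(X'',\cF|_{X''})$ and $H^\bullet(x_0,\cF|_{x_0})$ forces $\cF|_{X'}\cong\cO_{X'}\oplus\cO_{X'}(-1)$ and likewise on $X''$; then the gluing isomorphism over $x_0$, after a change of trivialization on each factor, must be the ``swap'' matrix $\left(\begin{smallmatrix}1&0&0&1\\0&1&1&0\end{smallmatrix}\right)$ — the point being that a non-swapping gluing would leave a global section or a global $H^1$ class surviving, contradicting acyclicity. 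This yields the exact sequence~\eqref{eq:asb-reducible} as the kernel of the (surjective) difference map. For the non-reduced case~\ref{it:r1}, $X_{\mathrm{red}}\cong\P^1$ and there is a square-zero ideal $\cI\cong\cO_{X_{\mathrm{red}}}(-1)$ (the conormal bundle), with $\upxi_*\cO_{X_{\mathrm{red}}}$ a quotient of $\cO_X$; tensoring with $\cF$ and noting $\cF$ is locally free of rank $2$, I get the filtration
\begin{equation*}
0 \to \cF\otimes\cI \to \cF \to \upxi_*\upxi^*\cF \to 0,
\end{equation*}
where $\upxi^*\cF$ is a rank-$2$ bundle on $\P^1$ and $\cF\otimes\cI\cong\upxi_*(\upxi^*\cF\otimes\cO_{X_{\mathrm{red}}}(-1))$. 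Writing $\upxi^*\cF\cong\cO(a)\oplus\cO(b)$, the constraint $\det\cF\cong\omega_X$ together with the structure of $\omega_X$ on the ribbon gives $a+b=-1$; acyclicity of $\cF$ — computed via the long exact sequence of the above triple combined with the connecting map, which is exactly \eqref{eq:connecting-io}/\eqref{eq:connecting-h} — forces this connecting map to be an isomorphism, which by direct inspection of $H^\bullet(\P^1,\cO(a)\oplus\cO(b))\to H^\bullet(\P^1,\cO(a-1)\oplus\cO(b-1))$ is possible only if $\{a,b\}=\{0,-1\}$; this gives \eqref{eq:asb-nonreduced}.

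The main obstacle I anticipate is the non-reduced case~\ref{it:r1}: unlike the other two, the extension \eqref{eq:asb-nonreduced} is genuinely non-split in general (it is a non-trivial self-extension-flavored object on a ribbon), and one must argue carefully that (a) $\upxi^*\cF$ cannot be $\cO\oplus\cO(-1)$ with the ``wrong'' connecting map, and (b) the extension class is itself pinned down up to isomorphism — i.e.\ that the two possible $\upxi^*\cF$-types compatible with $\det\cF\cong\omega_X$, namely $\cO\oplus\cO(-1)$ versus $\cO(1)\oplus\cO(-2)$, are distinguished precisely by acyclicity. The clean way to handle uniqueness here, and the uniformity across all three cases, is to go back to hypothesis~\ref{it:asb-gen} (or derive it from~\ref{it:asb-c1}) and simply identify $\cF$ with the canonical spinor bundle $\cF^0_{X/\kk}$ produced by Theorem~\ref{thm:k08}, then read off its component-wise structure from the resolution~\eqref{eq:cf-io} specialized to each conic type; the explicit sequences \eqref{eq:asb-reducible}--\eqref{eq:asb-nonreduced} then follow by restricting that resolution. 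I expect to present both the direct cohomological argument (for concreteness) and the identification with $\cF^0_{X/\kk}$ (for uniqueness), checking at the end that $-\rc_1(\cF)$ ample indeed implies compact generation in each of the three cases, so that \ref{it:asb-c1} $\Rightarrow$ \ref{it:asb-gen} and the two hypotheses collapse to one.
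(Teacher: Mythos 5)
Your structural analysis (restricting to components, using acyclicity to bound the splitting types, and using one of the two hypotheses to exclude the bad case of bidegree~$(0,-2)$ in the reducible case) follows essentially the same route as the paper, and the derivations of the exact sequences~\eqref{eq:asb-reducible} and~\eqref{eq:asb-nonreduced} are sound. However, there are two genuine gaps. First, your ``clean way to handle uniqueness'' --- invoking Theorem~\ref{thm:k08} to ``simply identify $\cF$ with the canonical spinor bundle $\cF^0_{X/\kk}$'' --- is circular: the assertion that a rank-$2$ acyclic bundle satisfying~\ref{it:asb-c1} or~\ref{it:asb-gen} must be isomorphic to $\cF^0_{X/\kk}$ \emph{is} the uniqueness statement being proved, and compact generators of a triangulated category are very far from unique, so nothing in Theorem~\ref{thm:k08} forces a rank-$2$ generator to coincide with the tilting object. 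The same circularity infects your treatment of hypothesis~\ref{it:asb-gen} in the reducible case: to exclude $\cF\vert_{X'}\cong\cO_{X'}^{\oplus 2}$ under~\ref{it:asb-gen} you need a direct argument, e.g.\ that such an $\cF$ would satisfy $\Ext^\bullet(\cF,\cO_{X'}(-1))=0$ with $0\neq\cO_{X'}(-1)\in\Ker(X/\kk)$, contradicting generation; this is how the paper uses~\ref{it:asb-gen}, and it is the only place either hypothesis enters beyond acyclicity.

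Second, and more seriously, in the non-reduced case your direct argument establishes that $\cF$ sits in \emph{some} extension~\eqref{eq:asb-nonreduced} with the connecting morphisms~\eqref{eq:connecting-io} and~\eqref{eq:connecting-h} isomorphisms, but does not show that such an extension is unique up to isomorphism --- which you yourself flag as the anticipated obstacle, and then resolve only by the circular identification above. Closing this requires an actual computation: one must analyze the group $\Ext^1\bigl(\upxi_*(\cO\oplus\cO(-1)),\,\upxi_*(\cO(-1)\oplus\cO(-2))\bigr)$, show that the map sending an extension class to the pair of connecting morphisms~\eqref{eq:connecting-io} and~\eqref{eq:connecting-h} is an isomorphism onto the relevant target, and then check that the automorphisms of the sub- and quotient sheaves act transitively on pairs of isomorphisms in that target. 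Without this step the isomorphism class of $\cF$ in case~\ref{it:r1} is not pinned down, so the uniqueness claim of the proposition remains unproved on the most degenerate fibers --- exactly where it is needed later (e.g.\ in the proof of Proposition~\ref{prop:cf-cliff}, which relies on $\cF\vert_{X_s}\cong\cF^0_{X/S}\vert_{X_s}$ for \emph{all} geometric fibers).
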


\begin{proof}
\ref{it:r3}
If~$X$ is smooth, the isomorphism~$\cF \cong \cO_X(-1)^{\oplus 2}$ follows easily from acyclicity of~$\cF$.

\ref{it:r2}
Assume~$X$ is reducible.
Then we have an exact sequence~$0 \to \cO_{X'}(-1) \to \cO_X \to \cO_{X''} \to 0$
and a similar sequence~$0 \to \cO_{X''}(-1) \to \cO_X \to \cO_{X'} \to 0$.
Tensoring by~$\cF$ we obtain exact sequences
\begin{equation*}
0 \to \cF\vert_{X'}(-1) \to \cF \to \cF\vert_{X''} \to 0
\qquad\text{and}\qquad 
0 \to \cF\vert_{X''}(-1) \to \cF \to \cF\vert_{X'} \to 0.
\end{equation*}
As~$\cF$ is acyclic and~$\dim(X) = 1$, the cohomology exact sequences imply
\begin{align*}
\rH^0(X',\cF\vert_{X'}(-1)) =
\rH^0(X'',\cF\vert_{X''}(-1)) =
\rH^1(X',\cF\vert_{X'}) =
\rH^1(X'',\cF\vert_{X''}) = 
0,
\\
\rH^0(X'',\cF\vert_{X''}) \cong 
\rH^1(X',\cF\vert_{X'}(-1)),
\qquad\ \,\,\,
\rH^0(X',\cF\vert_{X'}) \cong
\rH^1(X'',\cF\vert_{X''}(-1)).
\end{align*}
On the other hand, since~$\cF$ is a vector bundle of rank~2 and~$X' \cong X'' \cong \P^1$, we can write
\begin{equation*}
\cF\vert_{X'} \cong \cO_{X'}(-a') \oplus \cO_{X'}(-b')
\qquad\text{and}\qquad 
\cF\vert_{X''} \cong \cO_{X''}(-a'') \oplus \cO_{X''}(-b''),
\end{equation*}
and the cohomology equalities imply that~$0 \le a',b',a'',b'' \le 1$ and~$a' + b' + a'' + b'' = 2$.
We prove that the cases where~$a' = b' = 0$ or~$a'' = b'' = 0$ are not possible.
Indeed, if~\ref{it:asb-c1} holds, i.e., if~$-\rc_1(\cF)$ is ample, 
then~$a' + b', a'' + b'' > 0$, as required.
Similarly, if~\ref{it:asb-gen} holds and~$a' = b' = 0$ or~$a'' = b'' = 0$ 
then~$\cF$ is left orthogonal to~$\cO_{X'}(-1)$ or~$\cO_{X''}(-1)$, respectively. 
Thus, we must have~$\{a',b'\} = \{a'',b''\} = \{0,1\}$.

Now consider the exact sequence~$0 \to \cO_X \to \cO_{X'} \oplus \cO_{X''} \to \cO_{x_0} \to 0$.
Tensoring it by~$\cF$ and using the above computation of~$\cF\vert_{X'}$ and~$\cF\vert_{X''}$, 
we obtain~\eqref{eq:asb-reducible}.
Now the acyclicity of~$\cF$ implies the surjectivity 
of the morphism~\mbox{$\cO_{X'} \oplus \cO_{X''} \to \cO_{x_0}^{\oplus 2}$}, 
hence there is a basis in the target such that this morphism 
is given by the matrix~$\left(\begin{smallmatrix} 1 & 0 \\ 0 & 1 \end{smallmatrix}\right)$.
Finally, using local freeness of~$\cF$ and  acting by appropriate automorphisms of~$\cF\vert_{X'}$ and~$\cF\vert_{X''}$,
it is easy to reduce the second arrow of~\eqref{eq:asb-reducible} to the required form.

The uniqueness of~$\cF$ follows from~\eqref{eq:asb-reducible}.

\ref{it:r1}
Assume~$X$ is non-reduced.
Tensoring the exact sequence~$0 \to \upxi_*\cO_{X_{\mathrm{red}}}(-1) \to \cO_X \to \upxi_*\cO_{X_{\mathrm{red}}} \to 0$
by~$\cF$ we obtain an exact sequence
\begin{equation*}
0 \to \upxi_*(\cF\vert_{X_{\mathrm{red}}}(-1)) \to \cF \to \upxi_*(\cF\vert_{X_{\mathrm{red}}}) \to 0.
\end{equation*}
As in~\ref{it:r2}, we can write~$\cF\vert_{X_{\mathrm{red}}} \cong \cO_{X_{\mathrm{red}}}(-a) \oplus _{X_{\mathrm{red}}}(-b)$,
and arguing analogously, we conclude that~$a = 0$ and~$b = 1$ and obtain~\eqref{eq:asb-nonreduced}.

Applying the functor~$\bL_\bullet\upxi^*$ to~\eqref{eq:asb-nonreduced}, we obtain a long exact sequence
\begin{equation*}
\dots \to
\bL_1\upxi^*(\cF) \to 
\bL_1\upxi^*\upxi_*(\cF\vert_{X_{\mathrm{red}}}) \to 
\bL_0\upxi^*\upxi_*(\cF\vert_{X_{\mathrm{red}}}(-1)) \to 
\bL_0\upxi^*(\cF) \to 
\bL_0\upxi^*\upxi_*(\cF\vert_{X_{\mathrm{red}}}) \to 
0.
\end{equation*}
The last arrow here is an isomorphism, and the term~$\bL_1\upxi^*(\cF)$ is zero (because~$\cF$ is locally free),
hence the connecting morphism~\eqref{eq:connecting-io} is an isomorphism.
Similarly, applying the functor~$\rH^\bullet(-)$ to~\eqref{eq:asb-nonreduced} and using the acyclicity of~$\cF$, 
we conclude that~\eqref{eq:connecting-h} is an isomorphism as well.

To finish the proof it remains to show that an extension~\eqref{eq:asb-nonreduced} 
for which the connecting morphisms~\eqref{eq:connecting-io} and~\eqref{eq:connecting-h} are isomorphisms
is unique up to isomorphism.
For this note that there is an exact sequence
\begin{equation*}
0 \to 
\Ext^1(\cF\vert_{X_{\mathrm{red}}}, \cF\vert_{X_{\mathrm{red}}}(-1)) \to
\Ext^1(\upxi_*(\cF\vert_{X_{\mathrm{red}}}), \upxi_*(\cF\vert_{X_{\mathrm{red}}}(-1))) \to
\Hom(\cF\vert_{X_{\mathrm{red}}}(-1), \cF\vert_{X_{\mathrm{red}}}(-1)) \to
0.
\end{equation*}
The second arrow here takes an extension to the connecting morphism~\eqref{eq:connecting-io},
and the first arrow is the action of~$\upxi_*$.
Note also that~$\Ext^1(\cF\vert_{X_{\mathrm{red}}}, \cF\vert_{X_{\mathrm{red}}}(-1)) =
\Ext^1(\cO \oplus \cO(-1), \cO(-1) \oplus \cO(-2)) = \kk$
and the composition
\begin{equation*}
\Ext^1(\cF\vert_{X_{\mathrm{red}}}, \cF\vert_{X_{\mathrm{red}}}(-1)) \to
\Ext^1(\upxi_*(\cF\vert_{X_{\mathrm{red}}}), \upxi_*(\cF\vert_{X_{\mathrm{red}}}(-1))) \to
\Hom(\rH^0(\cF\vert_{X_{\mathrm{red}}}), \rH^1(\cF\vert_{X_{\mathrm{red}}}(-1))) 
\end{equation*}
(where the second arrow takes an extension to the connecting morphism~\eqref{eq:connecting-h}) is an isomorphism.
This means that the morphism
\begin{equation*}
\Ext^1(\upxi_*(\cF\vert_{X_{\mathrm{red}}}), \upxi_*(\cF\vert_{X_{\mathrm{red}}}(-1))) \to
\Hom(\cF\vert_{X_{\mathrm{red}}}(-1), \cF\vert_{X_{\mathrm{red}}}(-1)) \oplus
\Hom(\rH^0(\cF\vert_{X_{\mathrm{red}}}), \rH^1(\cF\vert_{X_{\mathrm{red}}}(-1))) 
\end{equation*}
that takes an extension to the connecting morphisms~\eqref{eq:connecting-io} and~\eqref{eq:connecting-h}) is an isomorphism.
It remains to note that the automorphisms of the first and last terms of~\eqref{eq:asb-nonreduced}
act transitively on pairs~$(\phi_1,\phi_2)$ of isomorphisms in the target of the above map,
hence the isomorphism class of~$\cF$ is unique.

The isomorphism~$\det(\cF) \cong \omega_X$ in cases~\ref{it:r3}, \ref{it:r2}, and~\ref{it:r1} follows easily.
\end{proof}

The following corollary shows that the second assumption in Definition~\ref{def:abs} may be relaxed.

\begin{corollary}
\label{cor:gen-asb}
If~$\cF$ is a vector bundle of rank~$2$ on a conic bundle~$X/S$ such that~$f_*(\cF) = 0$ and
\begin{enumerate}[label={\textup{(\roman*)}}]
\item 
\label{def2:asb-c1}
$-\rc_1(\cF)$ is $f$-ample, or
\item
\label{def2:asb-gen}
$\cF$ compactly generates~$\Ker(f_*)$ over~$S$,
\end{enumerate}
then~$\rc_1(\cF) = K_{X/S}$ in~$\Pic(X/S)$.
In particular, $\cF$ is an abstract spinor bundle.
\end{corollary}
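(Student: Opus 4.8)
The plan is to reduce the statement to the fiberwise computation carried out in Proposition \ref{prop:asb-fibers}. The key point is that Definition \ref{def:abs} asks for the equality $\rc_1(\cF) = K_{X/S}$ in the \emph{relative} Picard group $\Pic(X/S) = \Pic(X)/f^*\Pic(S)$, so it suffices to check the equality after restriction to every geometric fiber $X_s$ of $f$. Concretely, two classes in $\Pic(X)$ agree in $\Pic(X/S)$ if and only if their difference lies in $f^*\Pic(S)$, and for an integral base $S$ this can be detected by restricting to geometric fibers (a line bundle on $X$ whose restriction to every geometric fiber is trivial is pulled back from $S$, at least after the harmless reduction to the case where $S$ is, say, reduced — and $\Pic(X/S)$ only sees the reduced structure anyway since we are comparing first Chern classes).

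\medskip
First I would fix a geometric point $s \in S$ and consider the restriction $\cF\vert_{X_s}$. By the $S$-linearity and base-change statements of Lemma \ref{lem:ker-base-change} (in particular \eqref{eq:hb-ker} applied to $\cF$, using $f_*\cF = 0$ and flatness of $f$), the bundle $\cF\vert_{X_s}$ is an acyclic vector bundle of rank $2$ on the plane conic $X_s$. Next I would observe that hypothesis \ref{def2:asb-c1} (resp. \ref{def2:asb-gen}) specializes to hypothesis \ref{it:asb-c1} (resp. \ref{it:asb-gen}) of Proposition \ref{prop:asb-fibers}: $f$-ampleness of $-\rc_1(\cF)$ means $-\rc_1(\cF)\vert_{X_s}$ is ample on each fiber, and compact generation of $\Ker(f_*)$ over $S$ restricts to compact generation of $\Ker(X_s/\kk(s))$ by the base-change inclusion $\tilde\phi^*(\Ker^\perf(f_*)) \subset \Ker^\perf(f'_*)$ together with the fact that the right orthogonal is detected fiberwise. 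Hence Proposition \ref{prop:asb-fibers} applies and yields
\[
\det(\cF)\vert_{X_s} \cong \det(\cF\vert_{X_s}) \cong \omega_{X_s} \cong \omega_{X/S}\vert_{X_s} = K_{X/S}\vert_{X_s}
\]
for every geometric point $s$, where the middle isomorphism is the assertion $\det(\cF) \cong \omega_X$ of Proposition \ref{prop:asb-fibers} applied to the fiber.

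\medskip
Finally I would upgrade this fiberwise statement to an equality in $\Pic(X/S)$. The line bundle $\cN \coloneqq \det(\cF) \otimes \omega_{X/S}^{-1}$ on $X$ has trivial restriction to every geometric fiber of $f$ by the previous step. Since $f$ is a flat proper morphism with $f_*\cO_X \cong \cO_S$ and geometrically connected fibers, the standard seesaw/cohomology-and-base-change argument shows that $f_*\cN$ is a line bundle on $S$ and the adjunction morphism $f^*f_*\cN \to \cN$ is an isomorphism; thus $\cN \in f^*\Pic(S)$, which is exactly the statement $\rc_1(\cF) = K_{X/S}$ in $\Pic(X/S)$. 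Together with $f_*\cF = 0$ this verifies both conditions of Definition \ref{def:abs}, so $\cF$ is an abstract spinor bundle.

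\medskip
The main obstacle is purely bookkeeping rather than conceptual: one must make sure that the two alternative hypotheses \ref{def2:asb-c1} and \ref{def2:asb-gen} really do restrict to the two alternative hypotheses of Proposition \ref{prop:asb-fibers} on \emph{every} geometric fiber, including the non-reduced ones, and that "compact generation over $S$" is genuinely a fiberwise condition — this is where one uses Lemma \ref{lem:ker-base-change} most carefully. Everything else (the seesaw argument, the identification $\omega_{X/S}\vert_{X_s} \cong \omega_{X_s}$ from the Gorenstein hypothesis) is routine.
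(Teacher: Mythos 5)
Your proposal is correct and follows essentially the same route as the paper: restrict to geometric fibers, invoke Proposition~\ref{prop:asb-fibers} to get $\det(\cF)\vert_{X_s}\cong\omega_{X_s}$, and conclude that the difference $\det(\cF)\otimes\omega_{X/S}^{-1}$ is pulled back from~$S$. The paper's proof is just a compressed version of this; your added detail (the seesaw step and the check that each hypothesis restricts to the corresponding fiberwise hypothesis) fills in exactly what the paper leaves implicit.
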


\begin{proof}
Indeed, if~$\cF$ satisfies~\ref{def2:asb-c1} or~\ref{def2:asb-gen}
then the restriction of~$\cF$ to any geometric fiber of~$X/S$ satisfies one of the assumptions of Proposition~\ref{prop:asb-fibers},
hence~$\det(\cF)$ is isomorphic to~$\omega_{X/S}$ on each geometric fiber, 
hence~$\rc_1(\cF) = K_{X/S}$ in~$\Pic(X/S)$,
and therefore~$\cF$ is an abstract spinor bundle.
\end{proof}

\subsection{Canonical spinor bundles}

The following is an immediate consequence of Theorem~\ref{thm:k08}, Corollaries~\ref{cor:cfi-fs}, \ref{cor:gen-asb}, 
the isomorphisms~\eqref{eq:clifford-all}, and the definitions.

\begin{lemma}
\label{lem:csb-asb}
The sheaves~$\cF^i_{X/S}$ on~$X$ defined by~\eqref{eq:cf-io} are abstract spinor bundles.
Moreover,
\begin{equation}
\label{eq:cfi-all}
\cF^{i-2}_{X/S} \cong \cF^i_{X/S} \otimes f^*\cL
\end{equation} 
and the restrictions of~$\cF^i_{X/S}$ to geometric fibers of~$X/S$ 
are the sheaves described in Proposition~\textup{\ref{prop:asb-fibers}}.
\end{lemma}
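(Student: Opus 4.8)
The plan is to deduce everything from the results already established, so essentially no new argument is needed. First, for each~$i \in \ZZ$, the sheaf~$\cF^i_{X/S}$ is locally free of rank~$2$ over~$\cO_X$ by Theorem~\ref{thm:k08}, and by Corollary~\ref{cor:cfi-fs} we have~$\cF^i_{X/S} \in \Ker(f_*)$ and it compactly generates~$\Ker(f_*)$ over~$S$. Thus~$\cF^i_{X/S}$ satisfies hypothesis~\ref{def2:asb-gen} of Corollary~\ref{cor:gen-asb}, and therefore~$\rc_1(\cF^i_{X/S}) = K_{X/S}$ in~$\Pic(X/S)$; combined with~$f_*(\cF^i_{X/S}) = 0$, this shows that each~$\cF^i_{X/S}$ is an abstract spinor bundle in the sense of Definition~\ref{def:abs}.

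Next, the twisting formula~\eqref{eq:cfi-all}. By Corollary~\ref{cor:cfi-fs} we have~$\cF^i_{X/S} \cong \Phi_{\cF^0_{X/S}}(\Cliff_i(q))$, where~$\Phi_{\cF^0_{X/S}} \colon \Db(S,\Cliff_0(q)) \to \Db(X)$ is the $S$-linear equivalence of~\eqref{eq:phi-i}. The bimodules from~\eqref{eq:clifford-all} satisfy~$\Cliff_{i-2}(q) \cong \Cliff_i(q) \otimes \cL$ (this is immediate from the definition~\eqref{eq:clifford-all}: for~$i$ even both sides are~$\Cliff_0(q)\otimes\cL^{1-i/2}$, and for~$i$ odd both are~$\Cliff_1(q)\otimes\cL^{(1-i)/2}$, using that~$\cL$ is central). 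Applying the $S$-linear functor~$\Phi_{\cF^0_{X/S}}$ and using its $S$-linearity (Theorem~\ref{thm:k08}) we get
\begin{equation*}
\cF^{i-2}_{X/S} \cong \Phi_{\cF^0_{X/S}}(\Cliff_i(q) \otimes \cL) \cong \Phi_{\cF^0_{X/S}}(\Cliff_i(q)) \otimes f^*\cL \cong \cF^i_{X/S} \otimes f^*\cL,
\end{equation*}
which is~\eqref{eq:cfi-all}.

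Finally, the identification of the restrictions to geometric fibers. Fix a geometric point~$s \in S$ and let~$\cF \coloneqq \cF^i_{X/S}\vert_{X_s}$. Since~$\cF^i_{X/S}$ lies in~$\Ker^\perf(f_*)$ (it is a vector bundle in~$\Ker(f_*)$, and~$\Ker^\perf(f_*) = \Ker(f_*)\cap\Dp(X)$), formula~\eqref{eq:hb-ker} of Lemma~\ref{lem:ker-base-change} gives~$\rH^\bullet(X_s,\cF) = 0$, so~$\cF$ is an acyclic rank-$2$ bundle on the conic~$X_s$; moreover~$\det(\cF)\cong\omega_{X_s}$ by the~$c_1$ condition just established, so~$-\rc_1(\cF)\cong\omega^{-1}_{X_s}$ is ample. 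Hence~$\cF$ satisfies hypothesis~\ref{it:asb-c1} of Proposition~\ref{prop:asb-fibers}, and that proposition identifies it with the bundle described there in cases~\ref{it:r3}, \ref{it:r2}, \ref{it:r1}. There is no serious obstacle here — the entire content is a bookkeeping assembly of Theorem~\ref{thm:k08}, Corollaries~\ref{cor:cfi-fs} and~\ref{cor:gen-asb}, and Proposition~\ref{prop:asb-fibers}; the only point requiring a line of care is checking~$\Cliff_{i-2}(q)\cong\Cliff_i(q)\otimes\cL$ directly from~\eqref{eq:clifford-all}.
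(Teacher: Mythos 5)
Your proof is correct and takes exactly the route the paper intends: the paper gives no separate argument, declaring the lemma an immediate consequence of Theorem~\ref{thm:k08}, Corollaries~\ref{cor:cfi-fs} and~\ref{cor:gen-asb}, the isomorphisms~\eqref{eq:clifford-all}, and the definitions, and you have simply spelled out that deduction (local freeness and rank from Theorem~\ref{thm:k08}, membership in~$\Ker(f_*)$ and compact generation from Corollary~\ref{cor:cfi-fs}, the~$\rc_1$ condition from Corollary~\ref{cor:gen-asb}, the twist from~\eqref{eq:clifford-all} plus $S$-linearity, and the fiberwise description from Proposition~\ref{prop:asb-fibers}). The only blemish is a harmless arithmetic slip in a parenthetical: for odd~$i$ the common value of~$\Cliff_{i-2}(q)$ and~$\Cliff_i(q)\otimes\cL$ is~$\Cliff_1(q)\otimes\cL^{(3-i)/2}$ rather than~$\cL^{(1-i)/2}$, which does not affect the asserted isomorphism.
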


The bundles~$\cF^i_{X/S}$ are called the {\sf canonical spinor bundles} on~$X/S$.
The following lemma gives an explicit description for~$\cF^0_{X/S}$ and~$\cF^{-1}_{X/S}$.
By~\eqref{eq:cfi-all} all other~$\cF^i_{X/S}$ can be obtained from these two by twists. 
In the computation we use the following exact sequence
\begin{equation}
\label{eq:koszul}
0 \to \cO_{X/S} \to 
f^*\cE \otimes \cO_{X/S}(1) \to 
f^*(\wedge^2\cE) \otimes \cO_{X/S}(2) \to 
f^*(\wedge^3\cE) \otimes \cO_{X/S}(3) \to 
0,
\end{equation}
obtained by restricting the Koszul complex from~$\P_S(\cE)$ to~$X$.

\begin{lemma}
\label{lem:cf0-cf1}
There are unique abstract spinor bundles~$\cF^0$ and~$\cF^{-1}$ on~$X$ fitting into exact sequences
\begin{align}
\label{eq:csb}
0 \to \omega_{X/S} \to \cF^0 \to \cO_X \to 0,
\\
\label{eq:csb-1}
0 \to \cF^{-1} \to f^*\cE^\vee \to \cO_{X/S}(1) \to 0.
\end{align} 
Moreover, $\cF^i_{X/S} \cong \cF^i \otimes f^*(\det(\cE) \otimes \cL)$ for~$i \in \{-1,0\}$.
\end{lemma}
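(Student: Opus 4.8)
The plan is to construct $\cF^0$ and $\cF^{-1}$ explicitly from the resolutions~\eqref{eq:cf-io}, then match them against the abstract spinor bundles $\cF^0_{X/S}$ and $\cF^{-1}_{X/S}$ up to the twist by $f^*(\det(\cE)\otimes\cL)$. First I would observe that, by the normalization conventions of Remark~\ref{rem:normalized-twist} and the formula~\eqref{eq:cl0}, we have $\Cliff_0(q)^0 = \wedge^2\cE\otimes\cL$, so $\det(\Cliff_0(q)^0) \cong \det(\cE)^{\otimes 2}\otimes\cL^{\otimes 3}$; keeping careful track of such line-bundle twists is exactly what the final clause of the lemma is about. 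The cleanest route is: define $\cF^0$ by~\eqref{eq:csb} and $\cF^{-1}$ by~\eqref{eq:csb-1}, check directly that each is an abstract spinor bundle in the sense of Definition~\ref{def:abs}, verify uniqueness via Proposition~\ref{prop:asb-fibers} (or rather Corollary~\ref{cor:gen-asb}), and finally identify the twist by comparing with~\eqref{eq:cf-io} for $i=0$ and $i=-1$.

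For existence of $\cF^{-1}$: the composite $\cO_{X/S}(-1)\hookrightarrow f^*\cE \hookrightarrow f^*\Cliff_1(q)$ together with $\varphi_{-1}$ gives the resolution~\eqref{eq:cf-io} with $i=-1$. Dualizing and using that $\Cliff_{-2}(q) = \Cliff_0(q)\otimes\cL$ and $\Cliff_{-1}(q) = \Cliff_1(q)\otimes\cL$, one can strip off the $\Cliff_0(q)$-module structure and the twist to produce a two-term complex whose only nonzero term is $f^*\cE^\vee$ mapping onto $\cO_{X/S}(1)$; the kernel is then $\cF^{-1}\otimes f^*(\det(\cE)\otimes\cL)^{-1}$, up to checking the relevant line bundle bookkeeping — this is where $\wedge^3\cE$ from~\eqref{eq:koszul} enters and where I expect the main computational friction. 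Concretely, the map $f^*\cE^\vee\to\cO_{X/S}(1)$ should be (the transpose of) the tautological inclusion $\cO_{X/S}(-1)\hookrightarrow f^*\cE$ composed with the isomorphism $\cE\cong\cE^\vee\otimes\det(\cE)$ twisted appropriately, so that the middle term of~\eqref{eq:koszul} identifies the kernel correctly. For existence of $\cF^0$: either apply the analogous argument to~\eqref{eq:cf-io} with $i=0$ (where $\Cliff_{-1}(q) = \Cliff_1(q)\otimes\cL$ and $\Cliff_0(q)$ appears untwisted), or, more economically, produce $\cF^0$ from $\cF^{-1}$ using~\eqref{eq:koszul} and~\eqref{eq:cfi-all}: the sequence~\eqref{eq:csb} is essentially the statement that $\cF^0$ is an extension of $\cO_X$ by $\omega_{X/S}$, and acyclicity forces this extension to be nonsplit, hence unique by the $\Ext^1(\cO_X,\omega_{X/S})\cong\kk$ computation (which follows from relative Serre duality and $f_*\cO_X\cong\cO_S$, $R^1f_*\cO_X = 0$).

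It then remains to check three things. (a) $f_*\cF^0 = 0$: push forward~\eqref{eq:csb}, use $f_*\cO_X\cong\cO_S$, $R^1f_*\cO_X=0$, $f_*\omega_{X/S}=0$, $R^1f_*\omega_{X/S}\cong\cO_S$, and note the connecting map $\cO_S\to\cO_S$ is an isomorphism precisely because the extension is nontrivial fiberwise. Similarly $f_*\cF^{-1}=0$ follows from pushing forward~\eqref{eq:csb-1} together with $f_*\cO_{X/S}(1)\cong\cE^\vee$ being an isomorphism onto $f_*\cE^\vee$. (b) $\rc_1 = K_{X/S}$ in $\Pic(X/S)$: from~\eqref{eq:csb}, $\det\cF^0\cong\omega_{X/S}$ on the nose, and from~\eqref{eq:csb-1}, $\det\cF^{-1}\cong f^*\det\cE^\vee\otimes\cO_{X/S}(-1)$, which equals $\omega_{X/S}$ modulo $f^*\Pic(S)$ since $\omega_{X/S}\cong\cO_{X/S}(-1)\otimes f^*(\text{line bundle})$ — indeed $X\subset\P_S(\cE)$ is a relative quadric, so adjunction gives $\omega_{X/S}\cong\cO_{X/S}(-1)\otimes f^*(\cL^\vee\otimes\det\cE)$ or similar, and I would pin down the exact twist here. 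So both are abstract spinor bundles, and uniqueness within their twist-class is Corollary~\ref{cor:gen-asb} combined with Proposition~\ref{prop:asb-fibers}\ref{it:r3}--\ref{it:r1}. (c) The twist identification $\cF^i_{X/S}\cong\cF^i\otimes f^*(\det\cE\otimes\cL)$ for $i\in\{-1,0\}$: this drops out of comparing the two resolutions~\eqref{eq:cf-io} (for the intrinsic $\cF^i_{X/S}$) with the sequences~\eqref{eq:csb}, \eqref{eq:csb-1} (for $\cF^i$), tracking the $\cL$ and $\det\cE$ factors that were suppressed when we forgot the Clifford-module structure. The hard part throughout is purely the line-bundle bookkeeping — making sure the twist $\det\cE\otimes\cL$ is the right one and consistent between $i=-1$ and $i=0$ via~\eqref{eq:cfi-all} — rather than anything structural.
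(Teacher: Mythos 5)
Your outline coincides with the paper's: define $\cF^0$ and $\cF^{-1}$ directly by the two exact sequences, deduce existence and uniqueness from $f_*\omega_{X/S}\cong\cO_S[-1]$ and $f_*\cO_{X/S}(1)\cong\cE^\vee$, verify the two conditions of Definition~\ref{def:abs} by pushforward and a Chern class computation, and identify the twist by comparison with the resolutions~\eqref{eq:cf-io}. One small imprecision: the uniqueness is relative --- extensions of $\cO_X$ by $\omega_{X/S}$ are classified by $H^0(S,R^1f_*\omega_{X/S})\cong H^0(S,\cO_S)$, not by $\kk$, and the condition $f_*\cF^0=0$ singles out the nowhere-vanishing classes, all of which give isomorphic bundles; your alternative appeal to Proposition~\ref{prop:asb-fibers} only controls the restrictions to fibers and by itself does not determine the bundle on $X$.

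The genuine gap is in the last claim, which you dismiss as ``purely line-bundle bookkeeping rather than anything structural.'' It is not. To relate $\cF^0_{X/S}$ to $\cF^0$ the paper must first produce an epimorphism $\cF^0_{X/S}\twoheadrightarrow f^*(\wedge^3\cE\otimes\cL)$, and this requires proving that the composition of $\io^*(\varphi_0)$ with the natural surjection $f^*\Cliff_0(q)\otimes\cO_{X/S}(-1)\twoheadrightarrow f^*(\wedge^3\cE\otimes\cL)$ vanishes --- an explicit Clifford-multiplication computation at geometric points of $X$ using $q(x,x)=0$ --- after which the kernel line bundle is pinned down by pushing forward and using $f_*\cF^0_{X/S}=0$. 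For the other bundle the paper works with $\io^*(\varphi_1)$ (not $\varphi_{-1}$), shows it preserves the two-step filtrations coming from~\eqref{eq:cl0} and~\eqref{eq:clifford-all}, and identifies the induced maps on the graded pieces with arrows of the Koszul complex~\eqref{eq:koszul}, so that $\cF^1_{X/S}$ becomes a twist of $\cF^{-1}$; the passage to $\cF^{-1}_{X/S}$ is then~\eqref{eq:cfi-all}. Your proposed route of ``dualizing'' the resolution for $i=-1$ would have to contend with dualizing $\io_*$ of a sheaf on a divisor and is not obviously simpler. These computations are where the proof actually lives, and the proposal defers exactly them.
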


In particular, if the quadratic form~$q$ is normalized (see Remark~\ref{rem:normalized-twist}), 
we have~$\cF^i_{X/S} \cong \cF^i$ for~$i \in \{-1,0\}$.
The extension~$\cF^0$ was also considered in~\cite[Definition~6.2]{CI}.

\begin{proof}
Since~$f_*\omega_{X/S} \cong \cO_S[-1]$, there is a unique extension~\eqref{eq:csb} such that~$f_*(\cF^0) = 0$
(cf.~\cite[Lemma~6.3]{CI}).
Further, \eqref{eq:csb} implies~$\rc_1(\cF^0) = K_{X/S}$ in~$\Pic(X)$, hence~$\cF^0$ is an abstract spinor bundle.

Similarly, since~$f_*\cO_{X/S}(1) \cong \cE^\vee$, there is a unique exact sequence~\eqref{eq:csb-1} such that~$f_*(\cF^{-1}) = 0$.
Moreover, \eqref{eq:csb-1} implies that~$\rc_1(\cF^{-1}) = K_{X/S}$ in~$\Pic(X/S)$, hence~$\cF^{-1}$ is an abstract spinor bundle.

To relate~$\cF^0_{X/S}$ to~$\cF^0$, consider the composition
\begin{equation*}
f^*\Cliff_0(q) \otimes \cO_{X/S}(-1) \hookrightarrow
f^*\Cliff_0(q) \otimes f^*\cE \twoheadrightarrow
f^*((\wedge^2\cE \otimes \cL) \otimes \cE) \twoheadrightarrow
f^*(\wedge^3\cE \otimes \cL),
\end{equation*}
where the first arrow is the tautological embedding,
the second is the projection of~$\Cliff_0(q)$ onto the second component of~\eqref{eq:cl0},
and the third is wedge product.
Its restriction to the summand~$f^*(\wedge^2\cE \otimes \cL) \otimes \cO_{X/S}(-1)$ of~$f^*\Cliff_0(q) \otimes \cO_{X/S}(-1)$ 
coincides with the last morphism in the Koszul complex~\eqref{eq:koszul},
which is surjective, hence the above composition is surjective as well.
On the other hand, the composition
\begin{equation}
\label{eq:composition}
\xymatrix@1@C=4em{
f^*\Cliff_{-1}(q) \otimes \cO_{X/S}(-2) \ar[r]^{\ \io^*(\varphi_0)\ } &
f^*\Cliff_0(q) \otimes \cO_{X/S}(-1) \ar@{->>}[r] &
f^*(\wedge^3\cE \otimes \cL)}
\end{equation}
with the first morphism in~\eqref{eq:cf-io} restricted to~$X$ vanishes.
Indeed, on the summand~$\cE \otimes \cL \subset \Cliff_{-1}(q)$ at any geometric point~$x \in X$ 
(where we consider~$x$ as a vector in~$\cE_{f(x)}$ via the embedding~$X \subset \P_S(\cE)$) 
the composition~\eqref{eq:composition} is given by
\begin{equation*}
e \mapsto 
e \wedge x + q(e,x) \cdot \mathbf{1} \mapsto 
e \wedge x \wedge x = 0.
\end{equation*}
Similarly, on the summand~$\wedge^3\cE \otimes \cL^{\otimes 2} \subset \Cliff_{-1}(q)$ at point~$x$, 
the composition~\eqref{eq:composition} is equal to
\begin{align*}
e_1 \wedge e_2 \wedge e_3 &\mapsto 
q(e_1,x)\,e_2 \wedge e_3 - 
q(e_2,x)\,e_1 \wedge e_3 + 
q(e_3,x)\,e_1 \wedge e_2 
\\
& \mapsto
q(e_1,x)\,x \wedge e_2 \wedge e_3 - 
q(e_2,x)\,x \wedge e_1 \wedge e_3 + 
q(e_3,x)\,x \wedge e_1 \wedge e_2
\end{align*}
(where~$(e_1,e_2,e_3)$ is a basis in the fiber~$\cE_{f(x)}$ of~$\cE$),
hence the right side is equal to~$q(x,x)\,e_1 \wedge e_2 \wedge e_3 = 0$.

Thus, \eqref{eq:composition} vanishes.
On the other hand, restricting~\eqref{eq:cf-io} to~$X$ 
we see that~$\cF^0_{X/S}$ is the cokernel of the first arrow~$\io^*(\varphi_0)$ in~\eqref{eq:composition},
hence we obtain an epimorphism~$\cF^0_{X/S} \twoheadrightarrow f^*(\wedge^3\cE \otimes \cL)$.
Furthermore, by Lemma~\ref{lem:csb-asb} and Definition~\ref{def:abs} we have~$\rc_1(\cF^0_{X/S}) = K_{X/S}$ in~$\Pic(X/S)$,
hence the kernel of this epimorphism  can be written as~$\omega_{X/S} \otimes f^*\cL'$ 
for a line bundle~$\cL'$ on~$S$, 
hence we have an exact sequence
\begin{equation*}
0 \to \omega_{X/S} \otimes f^*\cL' \to \cF^0_{X/S} \to f^*(\wedge^3\cE \otimes \cL) \to 0.
\end{equation*}
Pushing it forward to~$S$ and using the projection formula, 
the isomorphism~$f_*\omega_{X/S} \cong \cO_S[-1]$,
and the fact that~$\cF^0_{X/S} \in \Ker(f_*)$ by Corollary~\ref{cor:cfi-fs},
we see that~$\cL' \cong \wedge^3\cE \otimes \cL$,
and conclude that~$\cF^0_{X/S}$ is the twist of the sheaf~$\cF^0$ 
defined in~\eqref{eq:csb} by~$f^*(\wedge^3\cE \otimes \cL)$.

Similarly, to relate~$\cF^{-1}_{X/S}$ to~$\cF^{-1}$, we note that the composition
\begin{equation*}
f^*\cO_S \otimes \cO_{X/S}(-2) \hookrightarrow 
f^*\Cliff_0(q) \otimes \cO_{X/S}(-2) \xrightarrow{\ \io^*(\varphi_1)\ }
f^*\Cliff_1(q) \otimes \cO_{X/S}(-1) 
\end{equation*}
at a geometric point~$x \in X$ is given by~$\mathbf{1} \mapsto x \in \cE_{f(x)} \subset \Cliff_1(q)_{f(x)}$,
hence the morphism~$\io^*(\varphi_1)$ preserves the filtrations 
of~$f^*\Cliff_0(q) \otimes \cO_{X/S}(-2)$ and~$f^*\Cliff_1(q) \otimes \cO_{X/S}(-1)$
induced by the exact sequences
\begin{equation*}
0 \to \cO_S \to \Cliff_0(q) \to \wedge^2\cE \otimes \cL \to 0
\qquad\text{and}\qquad 
0 \to \cE \to \Cliff_1(q) \to \wedge^3\cE \otimes \cL \to 0.
\end{equation*}
Furthermore, a simple computation shows that the maps induced by~$\io^*(\varphi_1)$ on the factors are the maps
\begin{equation*}
\cO_{X/S}(-2) \to f^*\cE \otimes \cO_{X/S}(-1)
\qquad\text{and}\qquad 
f^*(\wedge^2\cE \otimes \cL) \otimes \cO_{X/S}(-2) \to f^*(\wedge^3\cE \otimes \cL) \otimes \cO_{X/S}(-1)
\end{equation*}
obtained by twist from the Koszul complex~\eqref{eq:koszul}.
It follows that the sheaf~$\cF^{1}_{X/S} \cong \Coker(\io^*(\varphi_1))$ (see~\eqref{eq:cf-io})
is isomorphic to the cokernel of the first arrow in the Koszul complex twisted by~$\cO_{X/S}(-2)$,
i.e., to the kernel of the third arrow twisted by~$\cO_{X/S}(-2)$; explicitly
\begin{equation*}
\cF^{1}_{X/S} \cong 
\Ker\Big(f^*(\wedge^2\cE) \to f^*(\wedge^3\cE) \otimes \cO_{X/S}(1)\Big) \cong
\Ker\Big(f^*\cE^\vee \to \cO_{X/S}(1)\Big) \otimes f^*(\wedge^3\cE) \cong
\cF^{-1} \otimes f^*(\wedge^3\cE).
\end{equation*}
Twisting this by~$f^*\cL$ and using~\eqref{eq:cfi-all}, we obtain the last claim.
\end{proof}

\subsection{Proof of Theorem~\ref{thm:asb-cb} and Corollary~\ref{cor:he-sm}}

Recall Definition~\ref{def:pca} of pointwise Clifford algebras.

\begin{proposition}
\label{prop:cf-cliff}
If~$\cF$ is an abstract spinor bundle 
then~$\cF$ is a tilting generator for the category~$\Ker(f_*)$ over~$S$.
Moreover, $\cR_\cF \coloneqq f_*\cEnd(\cF)$
is a pointwise Clifford algebra on~$S$ and the adjoint functors
\begin{equation}
\label{eq:phi-phi}
\begin{aligned}
\Phi_\cF &\colon \Db(S, \cR_\cF) \to \Db(X),
&\qquad 
\cH &\mapsto f^*\cH \otimes_{f^*\cR_\cF} \cF,
\\
\Phi^!_\cF & \colon \Db(X) \to \Db(S, \cR_\cF),
&\qquad 
\cG &\mapsto f_*\cRHom(\cF, \cG)
\end{aligned}
\end{equation} 
define an $S$-linear t-exact equivalence~$\Db(S,\cR_\cF) \simeq \Ker(f_*)$ such that~$\Phi_\cF(\cR_\cF) \cong \cF$.
\end{proposition}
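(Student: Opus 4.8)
The plan is to reduce everything to Theorem~\ref{thm:k08}, which already gives us a model example of the desired situation with~$\cF$ replaced by the canonical spinor bundle~$\cF^0_{X/S}$. First I would establish that any abstract spinor bundle~$\cF$ \emph{compactly generates}~$\Ker(f_*)$ over~$S$. For this, note that by Proposition~\ref{prop:asb-fibers} the restriction of~$\cF$ to every geometric fiber~$X_s$ is the unique acyclic rank-$2$ bundle described there, hence is isomorphic to the restriction of the canonical spinor bundle~$\cF^0_{X/S}\vert_{X_s}$ (Lemma~\ref{lem:csb-asb}). Since compact generation of~$\Ker(f_*)$ over~$S$ can be checked fiberwise (by base change, Lemma~\ref{lem:ker-base-change}, together with the fact that~$\cF^0_{X/S}$ compactly generates by Corollary~\ref{cor:cfi-fs}), and since~$\cF$ and~$\cF^0_{X/S}$ agree on fibers, it follows that~$\cF$ compactly generates~$\Ker(f_*)$ over~$S$ as well. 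One should also record here that~$\cF \in \Ker(f_*)$, which is condition~\ref{def:asb-ker} of Definition~\ref{def:abs}.

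Next I would compute~$f_*\cRHom(\cF,\cF)$ and show it is concentrated in degree~$0$ and is a pointwise Clifford algebra. The key point is again fiberwise: for each geometric point~$s$, base change gives~$f_*\cRHom(\cF,\cF)\vert_s \cong \RHom_{X_s}(\cF\vert_{X_s}, \cF\vert_{X_s})$, and since~$\cF\vert_{X_s} \cong \cF^0_{X/S}\vert_{X_s}$, Corollary~\ref{cor:cfi-fs} (specialized via~\eqref{eq:fs-end-cfi} with~$i=j$) identifies this with~$\Cliff_0(q_s)$, which is concentrated in degree~$0$. Hence~$\cR_\cF \coloneqq f_*\cEnd(\cF)$ is a sheaf of~$\cO_S$-algebras, locally free because it is fiberwise of constant rank, and its decomposition into scalars plus trace-free part~$\cR_\cF = \cO_S \oplus \cEnd^0$-pushforward restricts on each fiber to the decomposition~\eqref{eq:cl0} of an even Clifford algebra; this is exactly Definition~\ref{def:pca}. (The trace map is the obvious one; that~$[\cR_\cF,\cR_\cF]$ lands in the trace-free part is automatic.)

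With these two facts in hand, the tilting statement follows from a standard criterion: $\cF$ lies in~$\Ker(f_*)$, it compactly generates~$\Ker(f_*)$ over~$S$, and~$f_*\cRHom(\cF,\cF) = \cR_\cF$ is a sheaf in degree~$0$ (no higher self-$\Ext$ relative to~$S$). Therefore the functor~$\Phi^!_\cF = f_*\cRHom(\cF,-)$ restricted to~$\Ker(f_*)$ is an equivalence onto~$\Db(S,\cR_\cF)$ with quasi-inverse~$\Phi_\cF = f^*(-)\otimes_{f^*\cR_\cF}\cF$, by the relative tilting formalism (one may cite the argument of~\cite{K08} giving the equivalence~\eqref{eq:phi-i} verbatim, since its only inputs were these three properties of~$\cF^0_{X/S}$). $S$-linearity is immediate from the projection formula as in Lemma~\ref{lem:ker-base-change}; t-exactness follows because the fibers of~$f$ are one-dimensional and~$\cF\vert_{X_s}$ is acyclic, so~$f_*\cRHom(\cF,\cG)$ for~$\cG$ a sheaf in~$\Ker(f_*)$ is again a sheaf, exactly as in the proof of Lemma~\ref{lem:ker-t-structure} and Theorem~\ref{thm:k08}. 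Finally~$\Phi_\cF(\cR_\cF) = f^*\cR_\cF \otimes_{f^*\cR_\cF} \cF \cong \cF$ is tautological.

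\textbf{Main obstacle.} The routine calculations are fiberwise identifications; the one genuinely substantive point is verifying that~$\cR_\cF = f_*\cEnd(\cF)$ satisfies the full Definition~\ref{def:pca}, i.e.\ that the \emph{decomposition}~$\cR_\cF = \cO_S \oplus \cR_\cF^0$ induced by the trace matches the Clifford decomposition~\eqref{eq:cl0} \emph{on every fiber}, not just that~$\cR_\cF$ is fiberwise isomorphic to some even Clifford algebra. This is where one must use Proposition~\ref{prop:asb-fibers} in the precise form that pins down the isomorphism class of~$\cF\vert_{X_s}$, so that the fiberwise isomorphism~$\cR_\cF\vert_s \cong \Cliff_0(q_s)$ is canonical enough to carry the grading; once that is secured, Proposition~\ref{prop:pca-cliff} (which needs~$S$ reduced — to be invoked later, not here) converts~$\cR_\cF$ into~$\Cliff_0(q_\cF)$, but for the present proposition we only need the pointwise Clifford property itself.
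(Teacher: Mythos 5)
Your plan follows essentially the same route as the paper: reduce everything to the fiberwise identification $\cF\vert_{X_s} \cong \cF^0_{X/S}\vert_{X_s}$ from Proposition~\ref{prop:asb-fibers}, use flat base change to compare $\Phi^!_\cF$ with $\Phi^!_{\cF^0_{X/S}}$ pointwise, and deduce compact generation, purity of $\cR_\cF$, the pointwise Clifford property, and the tilting equivalence from Theorem~\ref{thm:k08}. One small caution: t-exactness of $\Phi^!_\cF$ does not follow merely from one-dimensional fibers and acyclicity of $\cF\vert_{X_s}$ as your phrasing suggests — the paper derives it from the pointwise agreement $\Phi^!_\cF(\cG)_s \cong \Phi^!_{\cF^0_{X/S}}(\cG)_s$ together with the known t-exactness of $\Phi^!_{\cF^0_{X/S}}$, which is exactly the comparison you set up earlier, so the gap is one of wording rather than substance.
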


\begin{proof}
Let~$\cF^0_{X/S}$ be the canonical spinor bundle of~$X/S$. 
Note that
\begin{equation}
\label{eq:asb-csb-fibers}
\cF\vert_{X_s} \cong \cF^0_{X/S}\vert_{X_s} 
\end{equation} 
for any geometric point~$s \in S$,
because both~$\cF^0_{X/S}$ and~$\cF$ satisfy the assumptions of Proposition~\ref{prop:asb-fibers}.
We deduce from~\eqref{eq:asb-csb-fibers} that the functor~$\Phi_\cF^!$ agrees pointwise 
with the functor~$\Phi^!_{\cF^0_{X/S}}$ defined in~\eqref{eq:phi-i-shriek}, i.e., 
\begin{equation}
\label{eq:phi-cf-phi-cfi}
\Phi^!_\cF(\cG)_s \cong \Phi^!_{\cF^0_{X/S}}(\cG)_s
\end{equation}
for all~$\cG \in \Ker(f_*)$ and geometric points~$s \in S$.
Indeed, since~$f$ is flat, base change isomorphisms imply
\begin{align*}
\Phi^!_\cF(\cG)_s = 
(f_*\cRHom(\cF, \cG))_s &\cong 
\rH^\bullet(X_s, \cRHom(\cF\vert_{X_s}, \cG\vert_{X_s})) \\ &\cong
\rH^\bullet(X_s, \cRHom(\cF^0_{X/S}\vert_{X_s}, \cG\vert_{X_s})) \cong
(f_*\cRHom(\cF^0_{X/S}, \cG))_s =
\Phi^!_{\cF^0_{X/S}}(\cG)_s,
\end{align*}
as required.

Next, we check that the functor~$\Phi^!_\cF$ is t-exact on~$\Ker(f_*)$.
Indeed, since~$\cF$ is locally free, $\Phi^!_\cF$ is left exact, so it remains to show it is right exact.
If it is not, Lemma~\ref{lem:ker-t-structure} implies that there is a sheaf~$\cG \in \Ker(f_*)$ and a geometric point~$s \in S$
such that~\mbox{$(\Phi^!_\cF(\cG))_s \not\in \bD(S)^{\le 0}$}.
But then~\eqref{eq:phi-cf-phi-cfi} implies that~$\Phi^!_{\cF^0_{X/S}}(\cG)_s \not\in \bD(S)^{\le 0}$
in contradiction to t-exactness of the functor~$\Phi^!_{\cF^0_{X/S}}\vert_{\Ker(f_*)}$ that follows from Theorem~\ref{thm:k08}.

A similar argument shows that~$\cF$ is a compact generator for~$\Ker(f_*)$ over~$S$.
Indeed, otherwise there is a non-zero object~$\cG \in \Ker(f_*)$
such that~$\Phi^!_\cF(\cG) = 0$.
But then~\eqref{eq:phi-cf-phi-cfi} implies that~$(\Phi^!_{\cF^0_{X/S}}(\cG))_s = 0$ for any~$s \in S$, 
hence~$\Phi^!_{\cF^0_{X/S}}(\cG) = 0$, and hence~$\cG = 0$ by Theorem~\ref{thm:k08}.

Now we prove that~$\cR_\cF$ is a pointwise Clifford algebra.
First, since~$\Phi^!_\cF$ is t-exact, $\cR_\cF \cong \Phi^!_\cF(\cF)$ is a pure algebra.
Furthermore, applying~$f_*$ to the direct sum decomposition~$\cEnd(\cF) \cong \cO_X \oplus \cEnd^0(\cF)$,
where the second summand is the trace-free part, 
and setting~$\cR^0_\cF \coloneqq f_*\cEnd^0(\cF)$, 
we obtain a direct sum decomposition~$\cR_\cF = \cO_S \oplus \cR^0_\cF$, as in~\eqref{eq:cr0}.
Finally, using the argument of the first part of the proof 
together with the isomorphisms~\eqref{eq:asb-csb-fibers} and~\eqref{eq:fs-end-cfi},
we see that
\begin{equation*}
(\cR_\cF)_s \cong 
\Phi^!_\cF(\cF)_s \cong
\Phi^!_{\cF^0_{X/S}}(\cF^0_{X/S})_s \cong
(f_*\cEnd(\cF^0_{X/S}))_s \cong
\Cliff_0(q)_s,
\end{equation*}
so~$\cR_\cF$ is indeed a pointwise Clifford algebra.

Combining all the above we see that~$\cF$ is a compact tilting generator of the category~$\Ker(f_*)$ over~$S$,
hence the functors~$\Phi_\cF$ and~$\Phi^!_\cF$ provide the required $S$-linear t-exact equivalences.
Finally, the isomorphism~$\Phi_\cF(\cR_\cF) \cong \cF$ is obvious from~\eqref{eq:phi-phi}.
\end{proof}

We will also need the following partial converse to Proposition~\ref{prop:cf-cliff}.

\begin{lemma}
\label{lem:s-proj}
Assume~$\cF \in \Ker(f_*)$ is a sheaf such that the functor~$\Phi^!_\cF \colon \Ker(f_*) \to \Db(S)$ is t-exact.
Then~$\cF$ is locally free.
\end{lemma}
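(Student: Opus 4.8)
The plan is to pull the statement back, along the equivalence $\Phi_{\cF^0_{X/S}}$ of Theorem~\ref{thm:k08}, to a projectivity question over the even Clifford algebra $\cR \coloneqq \Cliff_0(q)$. Since $\Phi_{\cF^0_{X/S}} \colon \Db(S,\cR) \to \Ker(f_*)$ is a t-exact equivalence and $\cF$ lies in the heart of $\Ker(f_*)$, there is a unique coherent right $\cR$-module $\cM$ on $S$ with
\[
\cF \cong \Phi_{\cF^0_{X/S}}(\cM) = f^*\cM \otimes_{f^*\cR} \cF^0_{X/S}.
\]
It then suffices to prove that $\cM$ is a locally projective $\cR$-module: in that case, locally on $S$, $\cM$ is a direct summand of some $\cR^{\oplus n}$, so applying the additive functor $\Phi_{\cF^0_{X/S}}$ and using $\Phi_{\cF^0_{X/S}}(\cR) = \cF^0_{X/S}$ (Corollary~\ref{cor:cfi-fs}) exhibits $\cF$, over the preimage of that open subset of $S$, as a direct summand of $(\cF^0_{X/S})^{\oplus n}$; since $\cF^0_{X/S}$ is locally free of rank $2$ and $X$ is Noetherian, $\cF$ is then locally free.

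To see that $\cM$ is locally projective I would use the hypothesis that $\Phi^!_\cF$ is t-exact on $\Ker(f_*)$. The key point is that, read through the equivalence $\Phi_{\cF^0_{X/S}}$, the functor $\Phi^!_\cF|_{\Ker(f_*)}$ becomes the internal-Hom functor $\cRHom_\cR(\cM,-)$: concretely, for $\cG \in \Ker(f_*)$,
\[
\Phi^!_\cF(\cG) = f_*\cRHom_X(\cF,\cG) \cong \cRHom_\cR(\cM,\, f_*\cRHom_X(\cF^0_{X/S},\cG)) = \cRHom_\cR(\cM,\, \Phi^!_{\cF^0_{X/S}}(\cG)),
\]
by relative tensor--Hom adjunction applied to $\cF = f^*\cM \otimes_{f^*\cR}\cF^0_{X/S}$ together with the projection formula (equivalently, an $S$-linear equivalence is compatible with $\Db(S)$-valued Hom objects). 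Since $\Phi^!_{\cF^0_{X/S}}|_{\Ker(f_*)}$ is a t-exact equivalence onto $\Db(S,\cR)$ and the forgetful functor $\Db(S,\cR) \to \Db(S)$ is t-exact and conservative, t-exactness of $\Phi^!_\cF$ on $\Ker(f_*)$ is equivalent to: $\cRHom_\cR(\cM,\cN)$ is concentrated in degree $0$ for every coherent right $\cR$-module $\cN$.

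Finally, as $\cRHom_\cR(\cM,\cN)$ has vanishing cohomology in negative degrees, the previous condition says $\cExt^i_\cR(\cM,\cN) = 0$ for all $i>0$ and all $\cN \in \coh(S,\cR)$. Working locally on $S$ over the module-finite $\cO_S$-algebra $\cR$, this is precisely local projectivity of $\cM$: choosing locally a surjection $\cR^{\oplus n} \twoheadrightarrow \cM$, the vanishing of $\cExt^1_\cR(\cM,-)$ against its kernel splits the sequence, so $\cM$ is locally a direct summand of a free $\cR$-module. This completes the reduction initiated in the first paragraph.

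I expect the only delicate point to be the compatibility isomorphism in the second paragraph — that $\Phi^!_\cF$, transported through the equivalence $\Phi_{\cF^0_{X/S}}$, is $\cRHom_\cR(\cM,-)$. This is either verified directly via the relative adjunction displayed above, or deduced from the general fact — already used repeatedly in the paper — that an $S$-linear equivalence preserves $S$-linear mapping objects; everything else in the argument is formal homological algebra.
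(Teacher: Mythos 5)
Your argument is correct, but it takes a genuinely different route from the paper. The paper argues by contradiction directly on $X$: if $\cF$ is not locally free, there is a point $x\in X$ with $\cRHom(\cF,\cO_x)\not\in\bD(X)^{\le 0}$, hence $\Phi^!_\cF(\cO_x)\not\in\bD(S)^{\le 0}$; since $\cO_x$ itself does not lie in $\Ker(f_*)$, the paper replaces it by the unique extension $0\to\omega_{X_s}\to\cK_x\to\cO_x\to 0$ with $\cK_x\in\Ker(f_*)$ and uses Serre duality on the fiber (together with $\rH^\bullet(X_s,\cF\vert_{X_s})=0$) to show $\Phi^!_\cF(\omega_{X_s})=0$, so that $\Phi^!_\cF(\cO_x)\cong\Phi^!_\cF(\cK_x)$ is a sheaf by t-exactness --- a contradiction. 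You instead transport the whole question through the t-exact equivalence $\Phi_{\cF^0_{X/S}}$ to the Clifford algebra side, identify $\Phi^!_\cF\vert_{\Ker(f_*)}$ with $\cRHom_\cR(\cM,-)$, deduce $\cExt^{>0}_\cR(\cM,-)=0$ on coherent modules, hence local projectivity of $\cM$, and then local freeness of $\cF$ as a local direct summand of $(\cF^0_{X/S})^{\oplus n}$. Your reduction buys a cleaner conceptual statement (t-exactness of the corepresented functor $=$ projectivity of the corepresenting module, i.e.\ the standard tilting criterion), at the cost of the compatibility isomorphism you flag, which does require the relative tensor--Hom adjunction over $f^*\cR$ combined with the $(f^*,f_*)$-adjunction for module categories; the paper's proof avoids any appeal to the Clifford description and tests $\cF$ only against skyscrapers, which is why it needs the auxiliary object $\cK_x$. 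One small point to make explicit in your last step: the $\cExt$-vanishing is used locally against the kernel of a local surjection $\cR^{\oplus n}\twoheadrightarrow\cM$, so you should either pass to stalks or invoke extension of coherent subsheaves on the Noetherian scheme $S$; this is routine.
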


\begin{proof}
If~$\cF$ is not locally free, there is a geometric point~$x \in X$ 
such that~$\cRHom(\cF, \cO_x) \not\in \bD(X)^{\le 0}$.
Let~$i_x \colon \Spec(\kk(x)) \to X$ be the inclusion of~$x$.
Then
\begin{equation*}
\cRHom(\cF, \cO_x) \cong i_{x*}\cRHom(i_x^*\cF, \cO_{\kk(x)})
\qquad\text{and}\qquad 
\Phi^!_\cF(\cO_x) 
\cong f_*i_{x*}\cRHom(i_x^*\cF, \cO_{\kk(x)})
\end{equation*}
and since~$i_x$ and~$f \circ i_x$ are closed embeddings, 
the functors~$i_{x*}$ and~$f_* \circ i_{x*}$ are t-exact and conservative,
so it follows that~$\Phi^!_\cF(\cO_x) \not\in \bD(S)^{\le 0}$.
We show below that this leads to a contradiction.

Let~$s = f(x)$, so that~$x \in X_s$.
Since~$X_s$ is a Gorenstein curve, Serre duality implies that
\begin{equation*}
\Ext^\bullet(\cO_x, \omega_{X_s}) \cong \Ext^{1-\bullet}(\cO_{X_s}, \cO_x)^\vee = \kk(x)[-1],
\qquad 
\Ext^\bullet(\cO_{X_s}, \omega_{X_s}) \cong \Ext^{1-\bullet}(\cO_{X_s}, \cO_{X_s})^\vee = \kk(x)[-1],
\end{equation*}
and the 
pairing~$\Ext^\bullet(\cO_{X_s}, \cO_x) \otimes \Ext^\bullet(\cO_x, \omega_{X_s}) \to \Ext^\bullet(\cO_{X_s}, \omega_{X_s})$ 
is perfect, hence there is a unique extension
\begin{equation}
\label{eq:ckx}
0 \to \omega_{X_s} \to \cK_x \to \cO_x \to 0
\end{equation}
such that~$\cK_x \in \Ker(f_*) \cap \Coh(X_s)$.
Furthermore, Serre duality implies
\begin{equation*}
\Phi^!_\cF(\omega_{X_s}) \cong 
\RHom(\cF\vert_{X_s}, \omega_{X_s}) \otimes \cO_s \cong
\RHom(\cO_{X_s}, \cF\vert_{X_s}[1])^\vee \otimes \cO_s \cong
\rH^\bullet(X_s, \cF\vert_{X_s}[1])^\vee \otimes \cO_s =
0,
\end{equation*}
hence applying the functor~$\Phi^!_\cF$ to~\eqref{eq:ckx}, 
we obtain~$\Phi^!_\cF(\cO_x) \cong \Phi^!_\cF(\cK_x)$,
and since~$\Phi^!_\cF$ is t-exact on~$\Ker(f_*)$,
we conclude that~$\Phi^!_\cF(\cO_x) \in \Coh(S)$,
in contradiction to~$\Phi^!_\cF(\cO_x) \not\in \bD(S)^{\le 0}$.
\end{proof}

Now we are ready to give the proofs of our main results.

\begin{proof}[Proof of Theorem~\textup{\ref{thm:asb-cb}}]
By Proposition~\ref{prop:cf-cliff} the algebra~$\cR_\cF \coloneqq f_*\cEnd(\cF)$
is a pointwise Clifford algebra, 
hence by Proposition~\ref{prop:pca-cliff} 
there is a quadratic form~$q_\cF \colon \det(\cR_\cF^0) \to \Sym^2(\cR_\cF^0)$ 
such that~$\cR_\cF \cong \Cliff_0(q_\cF)$.
Let~$X_\cF/S$ be the corresponding conic bundle.
Using the isomorphism~$\cR_\cF \cong \Cliff_0(q_\cF)$ and applying Proposition~\ref{prop:cf-cliff}, 
we obtain a chain of $S$-linear t-exact Fourier--Mukai equivalences of triangulated categories
\begin{equation*}
\Db(S,\Cliff_0(q)) \xrightarrow[{\quad\raisebox{0.5ex}[0ex][0ex]{$\sim$}\quad}]{\ \Phi_{\cF^0_{X/S}}\ }
\Ker(X/S) \xrightarrow[{\quad\raisebox{0.5ex}[0ex][0ex]{$\sim$}\quad}]{\ \ \Phi^!_{\cF_{\vphantom{X/S}}}\ \ }
\Db(S, \cR_\cF) \xrightarrow[{\quad\raisebox{0.5ex}[0ex][0ex]{$\sim$}\quad}]{}
\Db(S,\Cliff_0(q_\cF)) \xrightarrow[{\quad\raisebox{0.5ex}[0ex][0ex]{$\sim$}\quad}]{\ \Phi_{\cF^0_{X_\cF/S}}\ }
\Ker(X_\cF/S).
\end{equation*}

The composition of the last three equivalences proves~\ref{it:ker-derived}.
Moreover, we have
\begin{equation*}
\Phi_\cF^!(\cF) \cong \cR_\cF \cong \Cliff_0(q_\cF)
\qquad\text{and}\qquad
\Phi_{\cF^0_{X_\cF/S}}(\Cliff_0(q_\cF)) \cong \cF^0_{X_\cF/S},
\end{equation*}
by Proposition~\ref{prop:cf-cliff} and Corollary~\ref{cor:cfi-fs},
hence this equivalence takes~$\cF$ to~$\cF^0_{X_\cF/S}$, as required.

On the other hand, the first three equivalences compose to an $S$-linear t-exact equivalence
\begin{equation*}
\Db(S,\Cliff_0(q)) \simeq \Db(S,\Cliff_0(q_\cF)).
\end{equation*}
Therefore, we have an $S$-linear exact equivalence of the abelian categories 
of~$\Cliff_0(q)$-modules and~$\Cliff_0(q_\cF)$-modules on~$S$
and therefore an $S$-linear Morita equivalence of the algebras~$\Cliff_0(q)$ and~$\Cliff_0(q_\cF)$;
this proves~\ref{it:cliff-morita}.

Now we prove the converse part of the theorem.
So, let~$X'/S$ be a conic bundle defined by a quadratic form~$q'$  
and let~$\Psi \colon \Ker(X'/S) \to \Ker(X/S)$ be an $S$-linear t-exact equivalence.
Set
\begin{equation*}
\cF' \coloneqq \Psi(\cF^0_{X'/S}) \in \Ker(X/S).
\end{equation*}
Since~$\Psi$ is t-exact, $\cF'$ is a pure sheaf.
Furthermore, we have an isomorphism of functors
\begin{equation*}
\Phi^!_{\cF^0_{X'/S}}(-) =
f'_*\cRHom(\cF^0_{X'/S}, -) \cong
f_*\cRHom(\Psi(\cF^0_{X'/S}), \Psi(-)) \cong
f_*\cRHom(\cF', \Psi(-)) \cong
\Phi^!_{\cF'}(\Psi(-)).
\end{equation*}
Since~$\Phi^!_{\cF^0_{X'/S}}(-)$ is t-exact and~$\Psi$ is a t-exact equivalence, $\Phi^!_{\cF'}$ is t-exact,
hence Lemma~\ref{lem:s-proj} proves that~$\cF'$ is locally free.
Moreover, since
\begin{equation*}
\cR' \coloneqq 
f_*\cRHom(\cF',\cF') \cong 
f_*\cRHom(\Psi(\cF^0_{X'/S}), \Psi(\cF^0_{X'/S})) \cong 
f'_*\cRHom(\cF^0_{X'/S}, \cF^0_{X'/S}) \cong 
\Cliff_0(q')
\end{equation*}
is a locally free algebra of rank~$4$, 
it follows that the Euler characteristic of the bundle~$\cEnd(\cF'\vert_{X_s})$ on~$X_s$ is~4,
hence the rank of~$\cF'$ is~$2$.
Finally, since~$\Psi$ is $S$-linear 
and~$\cF^0_{X'/S}$ compactly generates~$\Ker(X'/S)$ over~$S$ (Corollary~\ref{cor:cfi-fs}),
the bundle~$\cF'$ compactly generates~$\Ker(X/S)$ over~$S$.
Applying Corollary~\ref{cor:gen-asb}, we conclude that~$\cF'$ is an abstract spinor bundle.
Finally, the isomorphism of algebras~$\cR' \cong \Cliff_0(q')$ observed above 
in combination with Proposition~\ref{prop:pca-cliff} shows that~$X' \cong X_{\cF'}$.
\end{proof}

\begin{remark}
An abstract spinor bundle~$\cF'$ in the proof of the converse part of Theorem~\ref{thm:asb-cb} 
is defined up to an $S$-linear t-exact autoequivalence of~$\Ker(X/S)$.
For instance, in the case where~$X' = X$ (and~$f' = f$), 
one can take~$\cF' = \cF^i_{X/S}$ for any~$i \in \ZZ$ or any twist of these sheaves.
\end{remark}

\begin{proof}[Proof of Corollary~\textup{\ref{cor:he-sm}}]
If~$X'/S$ is a conic bundle hyperbolic equivalent to~$X/S$ 
then its even Clifford algebra~$\Cliff_0(q')$ is Morita equivalent to~$\Cliff_0(q)$ 
(see~\cite[Proposition~1.1(3)]{K24}),
hence the converse part of Theorem~\ref{thm:asb-cb} applies
and we conclude that~$X'/S$ is a spinor modification of~$X/S$.
\end{proof}

\subsection{Corollaries}

We briefly discuss some consequences of Theorem~\ref{thm:asb-cb}.

\begin{corollary}
\label{cor:spinor-modification}
The property of being a spinor modification is an equivalence relation,
i.e., if~$X'/S$ is a spinor modification of~$X/S$ then~$X/S$ is a spinor modification of~$X'/S$,
and if additionally~$X''/S$ is a spinor modification of~$X'/S$
then~$X''/S$ is a spinor modification of~$X/S$.

Moreover, spinor modifications are compatible with base change, 
i.e., if~$X'/S$ is a spinor modification of~$X/S$ and~$T \to S$ is a morphism of schemes 
then~$X'_T/T$ is a spinor modification of~$X_T/T$.
\end{corollary}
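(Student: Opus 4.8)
The plan is to deduce everything from Theorem~\ref{thm:asb-cb} and its converse. \emph{Reflexivity} is immediate: taking $\cF = \cF^0_{X/S}$, Corollary~\ref{cor:cfi-fs} gives $f_*\cEnd(\cF^0_{X/S}) \cong \Cliff_0(q)$, so Theorem~\ref{thm:asb-cb} yields $\Cliff_0(q_{\cF^0_{X/S}}) \cong \Cliff_0(q)$; by Proposition~\ref{prop:pca-cliff} the form $q_{\cF^0_{X/S}}$ is then the normalized twist of $q$, and since a conic bundle depends only on its quadratic form up to twist, $X_{\cF^0_{X/S}} \cong X$. For \emph{symmetry}, suppose $X'/S$ is a spinor modification of $X/S$, say $X' \cong X_\cF$; by part~\ref{it:ker-derived} of Theorem~\ref{thm:asb-cb} there is an $S$-linear t-exact Fourier--Mukai equivalence $\Ker(X'/S) \simeq \Ker(X/S)$, whose inverse is an equivalence $\Ker(X/S) \simeq \Ker(X'/S)$ of the same kind, so the converse part of Theorem~\ref{thm:asb-cb} (with the roles of $X$ and $X'$ interchanged) produces an abstract spinor bundle $\cF'$ on $X'$ with $X \cong X'_{\cF'}$. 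For \emph{transitivity}, if moreover $X''/S$ is a spinor modification of $X'/S$, I would compose the resulting $S$-linear t-exact Fourier--Mukai equivalence $\Ker(X''/S) \simeq \Ker(X'/S)$ with $\Ker(X'/S) \simeq \Ker(X/S)$ and apply the converse part of Theorem~\ref{thm:asb-cb} once more to obtain an abstract spinor bundle on $X$ realizing $X''$ as a modification of $X$. The only facts used here are that $S$-linearity, t-exactness and the Fourier--Mukai property are each stable under inverses and compositions of equivalences.

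For the base change statement I would argue directly. Write $X' \cong X_\cF$ for an abstract spinor bundle $\cF$ on $X$, let $\phi\colon T \to S$ be a morphism, let $f_T\colon X_T := X\times_S T \to T$ be the base change of $f$, let $\tilde\phi\colon X_T \to X$ be the induced morphism, and set $\cF_T := \tilde\phi^*\cF$. First I would check that $\cF_T$ is an abstract spinor bundle on $X_T/T$ in the sense of Definition~\ref{def:abs}: it is locally free of rank $2$; flat base change (Lemma~\ref{lem:ker-base-change}) gives $(f_T)_*\cF_T \cong \phi^*f_*\cF = 0$; and since the relative dualizing sheaf and first Chern classes commute with pullback, $\rc_1(\cF_T) = \tilde\phi^*\rc_1(\cF) = \tilde\phi^*K_{X/S} = K_{X_T/T}$ in $\Pic(X_T/T)$. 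Then, using $\tilde\phi^*\cEnd^0(\cF) \cong \cEnd^0(\cF_T)$ together with flat base change $\phi^*f_*\cEnd^0(\cF) \cong (f_T)_*\cEnd^0(\cF_T)$, the explicit data $(\cE_\cF, \cL_\cF, q_\cF)$ attached to $X_\cF$ by Theorem~\ref{thm:asb-cb} — all built out of $f_*\cEnd(\cF)$ and its commutator map via Proposition~\ref{prop:pca-cliff} — pull back along $\phi$ to the corresponding data attached to $\cF_T$; since the conic bundle of a quadratic form is the zero locus of the induced section in the projectivization of a rank-$3$ bundle, this identification is compatible with base change, so $X'_T = (X_\cF)_T \cong X_{\cF_T}$, and hence $X'_T/T$ is a spinor modification of $X_T/T$.

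I expect the base change part to be where the care is needed. One must check that forming $\cEnd^0(\cF)$ and pushing it forward along the flat Gorenstein morphism $f$ commutes with arbitrary base change on $S$ — this uses flatness of $f$, local freeness of $\cEnd^0(\cF)$, and the base change isomorphisms of Lemma~\ref{lem:ker-base-change} — and that the quadratic form assigned to a pointwise Clifford algebra in Proposition~\ref{prop:pca-cliff}, obtained by factoring the commutator map $\wedge^2\cR^0 \to \cR^0$ through $\Sym^2\cR^0$, is natural in $S$. By contrast, the equivalence-relation part is purely formal once Theorem~\ref{thm:asb-cb} and its converse are in hand.
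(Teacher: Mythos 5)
Your proposal is correct, and it reduces everything to Theorem~\ref{thm:asb-cb} just as the paper does, but it routes the argument differently. The paper's proof is a one-liner: by the two-way characterization in Theorem~\ref{thm:asb-cb}, ``$X'/S$ is a spinor modification of $X/S$'' is equivalent to ``$\Cliff_0(q')$ and $\Cliff_0(q)$ are $S$-linearly Morita equivalent'', and Morita equivalence is manifestly an equivalence relation preserved by base change. You instead work with the kernel-category equivalences for symmetry and transitivity, which forces you to check (as you note) that $S$-linearity, t-exactness and the Fourier--Mukai property survive inversion and composition; the Morita route sidesteps this entirely, and since the converse part of Theorem~\ref{thm:asb-cb} accepts either hypothesis \ref{it:cliff-morita} or \ref{it:ker-derived}, you could have invoked it with the Morita hypothesis and avoided the issue. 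For base change you give a genuinely more explicit argument -- pulling back the abstract spinor bundle, verifying Definition~\ref{def:abs} for $\cF_T$ via flat base change, and checking that the data $(\cE_\cF,\cL_\cF,q_\cF)$ of Proposition~\ref{prop:pca-cliff} are natural in $S$ -- which buys an effective identification $(X_\cF)_T \cong X_{\cF_T}$ rather than a mere existence statement. (Both your argument and the paper's tacitly assume $T$ still satisfies the standing hypotheses on the base -- integral in Definition~\ref{def:cb}, reduced in Proposition~\ref{prop:pca-cliff} -- so this is not a defect specific to your write-up.)
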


\begin{proof}
All claims are obvious because $S$-linear Morita equivalence of even Clifford algebras 
is an equivalence relation, and it is preserved under base changes.
\end{proof}

The following result supports Conjecture~\ref{conj:asb-he} (cf.~\cite[Proposition~1.1(5)]{K24}).

\begin{lemma}
\label{lem:sm-bir}
If~$X'/S$ is a spinor modification of~$X/S$ and the general fiber of~$X/S$ is smooth
then~$X'$ is birational to~$X$ over~$S$.
\end{lemma}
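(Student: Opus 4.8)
The plan is to use the generic triviality of the conic bundles, together with the fact (implicit in the construction of $X_\cF$) that both $X$ and $X_\cF$ become isomorphic over the locus of split fibers. Concretely, let $S^\circ \subset S$ denote the open dense subscheme over which the discriminant of $q$ (equivalently of $q_\cF$, by the Morita equivalence part~\ref{it:cliff-morita} of Theorem~\ref{thm:asb-cb}) does not vanish; over $S^\circ$ the conic bundle $X/S$ is smooth, and after shrinking $S^\circ$ further we may also assume $X_\cF/S$ is smooth there. The point is that a smooth conic bundle $X^\circ/S^\circ$ whose even Clifford algebra $\Cliff_0(q)\vert_{S^\circ}$ is Morita-trivial — i.e., isomorphic to $\cEnd(\cV)$ for a rank-$2$ bundle $\cV$ on $S^\circ$ — is itself a projectivization $\P_{S^\circ}(\cV)$, hence determined by $\cV$ up to a line-bundle twist. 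Since $X_\cF$ is built precisely so that $\Cliff_0(q_\cF)\cong f_*\cEnd(\cF)$ is $S$-linearly Morita equivalent to $\Cliff_0(q)$, the two bundles $\cV$ and $\cV_\cF$ appearing over (a possibly smaller) $S^\circ$ differ by a twist, whence $X^\circ \cong X_\cF^\circ$ over $S^\circ$. This isomorphism of dense open subschemes is the desired birational map.

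The steps, in order, are: (1) restrict everything to the smooth locus $S^\circ$ and record that $\Cliff_0(q)\vert_{S^\circ}$ and $\Cliff_0(q_\cF)\vert_{S^\circ}$ are $S^\circ$-linearly Morita equivalent Azumaya algebras of degree~$2$; (2) observe that a degree-$2$ Azumaya algebra which is Morita-trivial is of the form $\cEnd(\cV)$, and that for a smooth conic bundle the even Clifford algebra $\Cliff_0(q)$ recovers $X$ as a $\P^1$-bundle once it is split — this can be extracted either from the equivalence $\Ker(X^\circ/S^\circ)\simeq\Db(S^\circ,\Cliff_0(q))\simeq\Db(S^\circ,\cEnd(\cV))\simeq\Db(\P_{S^\circ}(\cV))$ of Theorem~\ref{thm:k08} together with a reconstruction argument, or more directly by noting that the two rulings of the split conic give the relevant rank-$2$ bundle; (3) compare the bundles obtained for $X$ and for $X_\cF$, using the explicit formula $\cE_\cF\cong(f_*\cEnd^0(\cF))^\vee$ together with Proposition~\ref{prop:cf-cliff} to see they agree up to a twist and a choice of ruling; (4) conclude $X^\circ\cong X_\cF^\circ$ as schemes over $S^\circ$, hence a birational isomorphism over $S$.

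Alternatively — and this is probably cleaner — one can avoid reconstruction entirely: by Proposition~\ref{prop:cf-cliff} the abstract spinor bundle $\cF$ is fiberwise isomorphic to $\cF^0_{X/S}$, and over $S^\circ$ the canonical spinor bundle of a smooth conic is just the twisted pullback of one of the two tautological rank-$2$ bundles on $\P_{S^\circ}$. Thus over $S^\circ$ the bundle $\cF$ is itself (a twist of) $\cF^0_{X/S}$, so $f_*\cEnd(\cF)\cong f_*\cEnd(\cF^0_{X/S})\cong\Cliff_0(q)$ already on $S^\circ$, which forces $q_\cF\vert_{S^\circ}$ to be a normalized twist of $q\vert_{S^\circ}$, and hence $X_\cF^\circ\cong X^\circ$ over $S^\circ$ by the bijection between conic bundles and quadratic forms up to twist (Remark~\ref{rem:normalized-twist}). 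Either route produces the birational isomorphism.

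The main obstacle is step~(2)/(the reconstruction step): making precise the claim that over the split locus the data of $X$ is faithfully encoded by $\Cliff_0(q)$ up to Morita equivalence, in an $S$-linear way, so that Morita-equivalent Clifford algebras give birationally isomorphic conic bundles. In the smooth projectivization case this is standard, but one must be a little careful that the identification is genuinely over $S^\circ$ (not just fiberwise) and that the line-bundle twist ambiguity does not affect the underlying scheme $\P_{S^\circ}(\cV)=\P_{S^\circ}(\cV\otimes\cM)$. Once that bookkeeping is in place the rest is immediate from the results already established in~\S\ref{sec:cb}.
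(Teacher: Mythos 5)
Your overall strategy---shrink to the dense open locus where the forms are nondegenerate and argue that Morita-equivalent Clifford algebras force the conic bundles to agree there---is the same as the paper's, but both routes you offer break down at the key reconstruction step. In the first route you assume that $\Cliff_0(q)\vert_{S^\circ}$ is Morita-\emph{trivial}, i.e.\ of the form $\cEnd(\cV)$, so that $X^\circ$ is a $\P^1$-bundle. That is exactly what fails in the cases of interest: by Lemma~\ref{lem:no-sections} the conic bundles of \S\ref{sec:af3-cb} have no rational sections, so their generic fibers are nontrivial conics and the restricted Clifford algebras are nonsplit quaternion algebras. What is actually needed is that two Morita-equivalent Azumaya algebras of the same degree over a field are isomorphic (Wedderburn, applied at the generic point of the integral scheme $S$), followed by the fact that a nondegenerate ternary quadratic form is recovered up to similarity from its even Clifford algebra (Witt); this yields $X'_\eta\cong X_\eta$ and hence the birational isomorphism. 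That Wedderburn--Witt combination is precisely the paper's proof, and you cannot bypass it by pretending the algebras are split.

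The alternative route has a different gap: the inference ``$\cF$ is fiberwise isomorphic to $\cF^0_{X/S}$, hence over $S^\circ$ it is a twist of $\cF^0_{X/S}$'' is false for families. Fiberwise isomorphism of vector bundles does not give isomorphism (even up to a line bundle pulled back from the base) over any dense open---indeed the whole point of spinor modifications is that there are many pairwise non-isomorphic abstract spinor bundles, all fiberwise isomorphic to the canonical one by Proposition~\ref{prop:asb-fibers}, and producing genuinely different conic bundles $X_\cF$. The claim does hold at the generic point $\eta$ (an acyclic rank-$2$ bundle with determinant $\omega$ on a smooth conic over an infinite field is unique up to isomorphism, since the relevant $\Hom$ space is $4$-dimensional with vanishing $\Ext^1$ and the isomorphism locus is a dense open subvariety with a rational point), but that uniqueness over a non-closed field is an extra argument you would need to supply, and it is essentially equivalent in content to the Wedderburn--Witt argument above. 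So the plan is right, but the proof as written does not close.
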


\begin{proof}
By Theorem~\ref{thm:asb-cb} the even Clifford algebras~$\Cliff_0(q)$ and~$\Cliff_0(q')$ are $S$-linear Morita equivalent.
After base change to the function field~$\kk(S)$ of~$S$, 
we obtain a Morita equivalence~$\Cliff_0(q_{\kk(S)}) \sim \Cliff_0(q'_{\kk(S)})$ of~$\kk(S)$-algebras.
If one of these algebras is Morita trivial, then so is the other, 
and then both are isomorphic to the \mbox{$2 \times 2$} matrix algebra over~$\kk(S)$.
Otherwise, since the algebras are 4-dimensional, by the Wedderburn--Artin theorem 
both are division algebras, hence the Morita equivalence 
implies an isomorphism~$\Cliff_0(q_{\kk(S)}) \cong \Cliff_0(q'_{\kk(S)})$.
Finally, Proposition~\ref{prop:pca-cliff} gives an isomorphism~$X_{\kk(S)} \cong X'_{\kk(S)}$ of conics, 
and we conclude that the conic bundles~$X/S$ and~$X'/S$ are birational over~$S$.
\end{proof}

Besides the hyperbolic equivalence and spinor modification equivalence,
there is yet another equivalence relation for conic bundles.
We say that conic bundles~$X/S$ and~$X'/S$ {\sf have equivalent discriminant data} 
if the discriminant divisors of~$X/S$ and~$X'/S$ coincide, i.e., $\Delta_{X/S} = \Delta_{X'/S}$ in~$S$,
and the double coverings~$\tilde\Delta_{X/S} \to \Delta_{X/S}$ and~$\tilde\Delta_{X'/S} \to \Delta_{X'/S}$ 
obtained from the Stein factorizations of the normalizations 
of~$X \times_S \Delta_{X/S} \to \Delta_{X/S}$ and~$X' \times_S \Delta_{X'/S} \to \Delta_{X'/S}$ are isomorphic.

In the case where~$S$ is a smooth rational surface, 
the argument of~\cite[Lemma~3.2]{BB} relying on the Artin--Mumford exact sequence
shows that an equivalence of discriminant data for conic bundles~$X/S$ and~$X'/S$
implies a Morita equivalence~$\Cliff_0(q_{\kk(S)}) \sim \Cliff_0(q'_{\kk(S)})$ 
of the corresponding even Clifford algebras over~$\kk(S)$,
hence, by the argument of Lemma~\ref{lem:sm-bir}, a birational equivalence of~$X/S$ and~$X'/S$.

Note however, that this \emph{does not imply} that the conic bundles are isomorphic,
so the claim of~\cite[Lemma~3.2]{BB} is incorrect 
(though most of the results of~\cite{BB} are correct, 
since they are only concerned with birational properties of conic bundles).

To finish this section, we prove the following useful result.

\begin{proposition}
\label{prop:xprime-smooth}
Let~$X'/S$ be a spinor modification of~$X/S$.
\begin{renumerate}
\item
\label{it:reg}
If~$S$ is regular and~$X$ is regular then~$X'$ is regular.
\item
If~$S$ is smooth over~$\kk$ and~$X$ is smooth over~$\kk$ then~$X'$ is smooth over~$\kk$.
\label{it:sm}
\end{renumerate}
\end{proposition}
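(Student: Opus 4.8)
The plan is to translate regularity of the total space of a conic bundle into a property of its kernel category (together with regularity of the base) that is visibly preserved by the equivalence $\Ker(X'/S)\simeq\Ker(X/S)$ of Theorem~\ref{thm:asb-cb}, and to deduce the smooth case from the regular one by base change.

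For part~\ref{it:reg}: since $X$ is of finite type over~$\kk$, it is regular if and only if $\Db(X)=\Dp(X)$. The semiorthogonal decomposition~\eqref{eq:dbx} and its perfect counterpart~\eqref{eq:ker-perf} (Corollary~\ref{cor:ker-perf}), together with the hypothesis that $S$ is regular --- so $\Db(S)=\Dp(S)$ and hence $f^*\Db(S)=f^*\Dp(S)$ --- show that $X$ is regular if and only if $\Ker(f_*)=\Ker^\perf(f_*)$, and likewise $X'$ is regular if and only if $\Ker(X'/S)=\Ker^\perf(X'/S)$. So it suffices to prove that the $S$-linear Fourier--Mukai equivalence $\Psi\colon\Ker(X/S)\xrightarrow{\ \sim\ }\Ker(X'/S)$ of Theorem~\ref{thm:asb-cb}\ref{it:ker-derived} carries $\Ker^\perf(X/S)$ onto $\Ker^\perf(X'/S)$. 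I would do this by establishing the intrinsic description: for $\cG\in\Ker(f_*)$, one has $\cG\in\Ker^\perf(f_*)$ if and only if $f_*\cRHom_X(\cG,\cG')$ is bounded for every $\cG'\in\Ker(f_*)$. Indeed, if $\cG$ is perfect then $\cRHom_X(\cG,\cG')$ is bounded coherent for bounded coherent $\cG'$, and $f_*$ preserves boundedness since $f$ has relative dimension~$1$; conversely, if $\cG$ is not perfect at a closed point $x$, put $s=f(x)$ and $\iota_s\colon X_s\hookrightarrow X$, and use the object $\cK_x\in\Ker(f_*)$ from the proof of Lemma~\ref{lem:s-proj} with $0\to\iota_{s*}\omega_{X_s}\to\cK_x\to\cO_x\to0$: since $\RGamma(X_s,\cG\vert_{X_s})=(f_*\cG)\vert_s=0$, the identification $\cRHom_X(\cG,\iota_{s*}\omega_{X_s})\cong\iota_{s*}\cRHom_{X_s}(\cG\vert_{X_s},\omega_{X_s})$ and Serre duality on the Gorenstein curve $X_s$ give $f_*\cRHom_X(\cG,\iota_{s*}\omega_{X_s})\cong\RGamma(X_s,\cG\vert_{X_s})^\vee[-1]=0$, so $f_*\cRHom_X(\cG,\cK_x)\cong f_*\cRHom_X(\cG,\cO_x)$, which is unbounded because $\cG_x$ has infinite projective dimension over $\cO_{X,x}$ and $f$ is finite over $\{x\}$ --- a contradiction. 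Finally, $S$-linearity of $\Psi$ gives for every $\cH\in\Dp(S)$ natural isomorphisms $\Hom_S(\cH,f_*\cRHom_X(\cG,\cG'))\cong\Hom_X(f^*\cH\otimes\cG,\cG')\cong\Hom_{X'}(f'^*\cH\otimes\Psi\cG,\Psi\cG')\cong\Hom_S(\cH,f'_*\cRHom_{X'}(\Psi\cG,\Psi\cG'))$ in all degrees, hence $f_*\cRHom_X(\cG,\cG')\cong f'_*\cRHom_{X'}(\Psi\cG,\Psi\cG')$ because $\Dp(S)=\Db(S)$; thus the description is preserved by $\Psi$ and part~\ref{it:reg} follows.

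Part~\ref{it:sm} then follows by base change: $X$ (resp.\ $S$) is smooth over~$\kk$ exactly when $X_{\overline{\kk}}$ (resp.\ $S_{\overline{\kk}}$) is regular. If $X$ and $S$ are smooth over~$\kk$, then $X_{\overline{\kk}}$ and $S_{\overline{\kk}}$ are regular; by Corollary~\ref{cor:spinor-modification} the base change $X'_{\overline{\kk}}/S_{\overline{\kk}}$ is a spinor modification of $X_{\overline{\kk}}/S_{\overline{\kk}}$; since $\overline{\kk}$ again has characteristic different from~$2$, part~\ref{it:reg} applied over $\overline{\kk}$ shows $X'_{\overline{\kk}}$ is regular, i.e.\ $X'$ is smooth over~$\kk$. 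The step I expect to be the main obstacle is the converse direction of the intrinsic description of $\Ker^\perf(f_*)$ --- detecting non-perfectness of $\cG$ by pairing against an object that still lies in $\Ker(f_*)$; once the auxiliary extension $\cK_x$ of Lemma~\ref{lem:s-proj} and the vanishing $f_*\cRHom_X(\cG,\iota_{s*}\omega_{X_s})=0$ are in hand, the rest is formal.
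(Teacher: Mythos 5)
Your proof is correct, but it takes a genuinely different route from the paper's. The paper invokes Orlov's homological criterion (a scheme is regular iff its perfect derived category has a strong generator) together with the fact that a triangulated category with a semiorthogonal decomposition is regular iff its components are, and then simply asserts that the Fourier--Mukai equivalence $\Ker(X/S)\simeq\Ker(X'/S)$ restricts to the perfect subcategories. You instead use the criterion ``regular iff $\Db=\Dp$'' and then \emph{prove} the restriction statement by exhibiting an intrinsic, equivalence-invariant characterization of $\Ker^\perf(f_*)$ inside $\Ker(f_*)$, with the objects $\cK_x$ from Lemma~\ref{lem:s-proj} serving as test objects detecting non-perfectness (via $f_*\cRHom(\cG,\cK_x)\cong f_*\cRHom(\cG,\cO_x)$). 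This is a nice trade: the paper's argument is shorter and leans on established machinery, while yours is more self-contained and actually justifies the step the paper glosses over with the phrase ``given by a Fourier--Mukai functor.'' Part~\ref{it:sm} is handled identically in both.

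One step deserves repair. From the natural isomorphisms $\Hom_S(\cH,f_*\cRHom_X(\cG,\cG'))\cong\Hom_S(\cH,f'_*\cRHom_{X'}(\Psi\cG,\Psi\cG'))$ for all $\cH\in\Dp(S)$ you conclude that the two objects are isomorphic; but an isomorphism of restricted Yoneda functors on compact objects does not by itself yield an isomorphism of the representing objects (restricted Yoneda is neither full nor conservative on isomorphism classes). Fortunately you do not need this: taking $\cH=\cO_S$ in your own chain of isomorphisms already gives $\Ext^\bullet_X(\cG,\cG')\cong\Ext^\bullet_{X'}(\Psi\cG,\Psi\cG')$, and your computation with $\cK_x$ shows the \emph{absolute} groups $\Ext^n_X(\cG,\cK_x)\cong\Ext^n_{\cO_{X,x}}(\cG_x,\kappa(x))$ are nonzero for infinitely many $n$ when $\cG$ is not perfect at $x$. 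So replace the relative characterization by the absolute one ($\cG\in\Ker^\perf(f_*)$ iff $\Hom(\cG,\cG'[n])=0$ for $n\gg0$ for every $\cG'\in\Ker(f_*)$), which is manifestly preserved by any exact equivalence, and the argument closes without the representability issue.
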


\begin{proof}
\ref{it:reg}
We will use the homological criterion for regularity~\cite[Theorem~3.27]{O16}:
a scheme~$X$ is regular if and only if the category~$\Dp(X)$ is regular (i.e., has a strong generator).
We will also use the following result:
a triangulated category with a semiorthogonal decomposition is regular 
if and only if its components are regular~\cite[Propositions~3.20 and~3.22]{O16}.

Since~$S$ is regular, the category~$\Dp(S)$ is regular.
Further, since~$X$ is regular, the category~$\Dp(X)$ is regular, 
and using~\eqref{eq:ker-perf} we deduce that~$\Ker^\perf(X/S)$ is regular.
Since~$\Ker(X/S) \simeq \Ker(X'/S)$ by Theorem~\ref{thm:asb-cb}
and the equivalence is given by a Fourier--Mukai functor, we have~$\Ker^\perf(X/S) \simeq \Ker^\perf(X'/S)$,
hence~$\Ker^\perf(X'/S)$ is regular.
Thus, $\Dp(S)$ and~$\Ker^\perf(X'/S)$ are both regular, hence~$\Dp(X')$ is regular, 
and therefore~$X'$ is regular.

\ref{it:sm}
Let~$\bar\kk$ be an algebraic closure of~$\kk$.
If~$S$ and~$X$ are smooth over~$\kk$, the schemes~$S_{\bar\kk}$ and~$X_{\bar\kk}$ are regular.
Furthermore, $X'_{\bar\kk}$ is a spinor modification of~$X_{\bar\kk}$ by Corollary~\ref{cor:spinor-modification}
hence~$X'_{\bar\kk}$ is regular by~\ref{it:reg}, and therefore~$X'$ is smooth over~$\kk$.
\end{proof}

%%%%%%%%%%%%%%%%%%%%%%%%%%%%%%%%%%%%%%%%%%%%%%%%%%%%%%%%%%%%%%%%%%%%%%%%%%%%%%%%%
%%%%%%%%%%%%%%%%%%%%%%%%%%%%%%%%%%%%%%%%%%%%%%%%%%%%%%%%%%%%%%%%%%%%%%%%%%%%%%%%%
%%%%%%%%%%%%%%%%%%%%%%%%%%%%%%%%%%%%%%%%%%%%%%%%%%%%%%%%%%%%%%%%%%%%%%%%%%%%%%%%%

\section{Almost Fano threefolds with a conic bundle structure}
\label{sec:af3-cb}

In this section we apply the technique of spinor modifications
to describe the structure of the kernel categories for conic bundles related to nonfactorial 1-nodal Fano threefolds.
We also use the obtained descriptions to construct a categorical absorption of singularities
for the corresponding nonfactorial 1-nodal Fano threefolds, see~\cite{KS22,KS23}.

In this section we work over an algebraically closed field~$\kk$ of characteristic~$0$.

\subsection{The conic bundles}

Recall from~\cite[Table~2]{KP23} that there are~4 deformation types 
of nonfactorial 1-nodal prime Fano threefolds~$X$ 
with a small resolution~$\pi \colon Y \to X$ 
such that~$Y$ has a structure of a conic bundle over~$\P^2$ which is not a ~$\P^1$-bundle;
these are types~\typeo{12nb}, \typeo{10na}, \typeo{8nb}, and~\typeo{5n}.
We concentrate on the first three types,
and type~\typeo{5n} will be discussed briefly in~\S\ref{ss:5n}.

By~\cite[Proposition~6.5 and Remark~6.6]{KP23} for each of the types~\typeo{12nb}, \typeo{10na}, \typeo{8nb} 
the conic bundle~$Y/\P^2$ is defined by a quadratic form~$q \colon \cO_{\P^2}(k) \to \Sym^2\cE^\vee$, where
\begin{equation}
\label{eq:k}
k = 
\begin{cases}
\hphantom{-}3, & \text{for type~\typeo{12nb},}\\
\hphantom{-}2, & \text{for type~\typeo{10na},}\\
\hphantom{-}1, & \text{for type~\typeo{8nb},}
\end{cases}
\end{equation} 
and~$\cE$ is a vector bundle of rank~3 on~$\P^2$
whose dual fits into the following exact sequence
\begin{equation}
\label{eq:cev}
0 \longrightarrow 
\cO_{\P^2}^{\oplus (k - 1)} \longrightarrow 
\cO_{\P^2}(1)^{\oplus (k + 2)} \longrightarrow 
\cE^\vee \longrightarrow \cO_L 
\longrightarrow 0,
\end{equation} 
where~$L \subset \P^2$ is a line.
Note that the discriminant divisor of~$Y/\P^2$ has degree~$6 - k \in \{3,4,5\}$;
we will show below that it does not contain the line~$L$.

The semiorthogonal decomposition~\eqref{eq:dbx} takes for the conic bundle~$f \colon Y \to \P^2$ the form
\begin{equation}
\label{eq:sod-y-cb}
\Db(Y) = \langle \Ker(f_*), f^*(\Db(\P^2)) \rangle.
\end{equation}
We will construct an exceptional object in~$\Ker(f_*)$ and describe its orthogonal complement.
Note that the canonical spinor bundles~$\cF^i_{Y/\P^2}$ are not exceptional; in fact, 
\begin{equation*}
\Ext^\bullet(\cF^i_{Y/\P^2}, \cF^i_{Y/\P^2}) \cong
\rH^\bullet(\P^2, \Cliff_0(q)) \cong 
\rH^\bullet(\P^2, \cO_{\P^2} \oplus \wedge^2\cE(k)) \cong
\kk \oplus \rH^\bullet(\P^2, \cE^\vee(-3))
\end{equation*}
by~\eqref{eq:fs-end-cfi} and~\eqref{eq:cl0},
and taking~\eqref{eq:cev} into account, 
we compute~$\Ext^\bullet(\cF^i_{Y/\P^2}, \cF^i_{Y/\P^2}) \cong \kk \oplus \kk^{\oplus (k + 1)}[-1]$.

We denote by~$h$ the line class of~$\P^2$ (as well as its pullback to~$\P_{\P^2}(\cE)$ and~$Y$)
and by~$H$ the relative hyperplane class of~$\P_{\P^2}(\cE)$ (as well as its restriction to~$Y$).
Note that
\begin{equation}
\label{eq:ky}
\pi^*K_X = K_Y = -H
\qquad\text{and}\qquad 
K_{Y/\P^2} = 3h - H.
\end{equation} 
The dual of the epimorphism~\mbox{$\cE^\vee \twoheadrightarrow \cO_L$} gives an embedding~$\cO_L \hookrightarrow \cE\vert_L$,
hence a section
\begin{equation}
\label{eq:curve-c}
C \subset \P_L(\cE\vert_L) \subset \P_{\P^2}(\cE)
\end{equation}
of the projection~$\P_L(\cE\vert_L) \to L$ such that
\begin{equation}
\label{eq:h-c}
H \cdot C = 0
\qquad\text{and}\qquad
h \cdot C = 1.
\end{equation}
In particular, \eqref{eq:h-c} shows that~$C$ is $K$-trivial, 
and since~$\pi \colon Y \to X$ is a small resolution of a Fano variety, 
we see that~$C$ is the exceptional curve of~$\pi$, and since~$X$ is 1-nodal, we have
\begin{equation}
\label{eq:cn-c-y}
\cN_{C/Y} \cong \cO_C(-1)^{\oplus 2}.
\end{equation}

Note that since~$\g(X) \ge 7$, it follows from~\cite[Proposition~5.2]{KP23}
that there are only finitely many anticanonical lines on~$X$ through the node,
hence there are only finitely many half-fibers of the conic bundle~$Y/\P^2$ intersecting the curve~$C$.
Therefore, the line~$L$ is not contained in the discriminant of~$Y/\P^2$.

Further, since~$X$ is a prime Fano threefold, i.e., $\Pic(X) = \ZZ \cdot K_X$, 
it follows from~\eqref{eq:h-c} that
\begin{equation}
\label{eq:picy}
\Pic(Y) = \ZZ \cdot H \oplus \ZZ \cdot h
\end{equation}
(alternatively, this follows from~\cite[Proposition~3.3]{KP23}).

We denote by~$\cI_{C,Y}$ the ideal of~$C$ on~$Y$.
Our main observation is the following

\begin{lemma}
\label{lem:pf-cf1}
There is a unique non-split extension
\begin{equation}
\label{eq:def-cf1}
0 \to \cO_Y(2h - H) \to \cF \to \cI_{C,Y} \to 0
\end{equation}
and the sheaf~$\cF$ defined by~\eqref{eq:def-cf1} is an abstract spinor bundle on~$Y/\P^2$ with~$\rc_1(\cF) = 2h - H$.

In particular, we have~$\rH^\bullet(Y, \cF(th)) = 0$ for all~$t \in \ZZ$.
\end{lemma}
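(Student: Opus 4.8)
My plan is to realize $\cF$ as a Hartshorne--Serre extension along $C$, read off its Chern class, and then prove $f_*\cF = 0$ by a fibrewise analysis over the line $L = f(C)$; once $f_*\cF = 0$ is in hand, the remaining assertions are formal. First I would compute $\Ext^1_Y(\cI_{C,Y}, \cO_Y(2h-H))$. Since $\cO_Y(2h-H) \cong \omega_Y \otimes f^*\cO_{\P^2}(2)$ by~\eqref{eq:ky} and relative duality for the conic bundle gives $f_*\omega_{Y/\P^2} \cong \cO_{\P^2}[-1]$, one gets $f_*\cO_Y(2h-H) \cong \cO_{\P^2}(-1)[-1]$, hence $\rH^\bullet(Y, \cO_Y(2h-H)) = 0$; applying $\Hom_Y(-, \cO_Y(2h-H))$ to $0 \to \cI_{C,Y} \to \cO_Y \to \cO_C \to 0$ then identifies $\Ext^1_Y(\cI_{C,Y}, \cO_Y(2h-H))$ with $\Ext^2_Y(\cO_C, \cO_Y(2h-H))$. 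As $C \cong \P^1$ is smooth of codimension $2$ with $\cN_{C/Y} \cong \cO_C(-1)^{\oplus 2}$ by~\eqref{eq:cn-c-y} and $\cO_Y(2h-H)|_C \cong \cO_C(2)$ by~\eqref{eq:h-c}, the local-to-global spectral sequence collapses and yields $\Ext^2_Y(\cO_C, \cO_Y(2h-H)) \cong \rH^0(C, \det\cN_{C/Y} \otimes \cO_C(2)) = \rH^0(C, \cO_C) = \kk$, giving the existence and uniqueness up to isomorphism of the non-split extension~\eqref{eq:def-cf1}.

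The same spectral sequence identifies the extension class with a nowhere vanishing section of $\cExt^1_Y(\cI_{C,Y}, \cO_Y(2h-H)) \cong \cO_C$, which is exactly the Hartshorne--Serre condition ensuring that the middle term $\cF$ of~\eqref{eq:def-cf1} is locally free of rank $2$. From~\eqref{eq:def-cf1} one reads off $\rc_1(\cF) = 2h - H$, and restricting to a geometric fibre $X_s$ shows $-\rc_1(\cF)$ is $f$-ample (it restricts to $\omega_{X_s}^{-1}$). Granting $f_*\cF = 0$ for the moment, Corollary~\ref{cor:gen-asb} then shows that $\cF$ is an abstract spinor bundle with $\rc_1(\cF) = K_{Y/\P^2}$ in $\Pic(Y/\P^2)$ (consistent with $\rc_1(\cF) = 2h-H$ since $K_{Y/\P^2} = 3h - H$), and the last assertion follows from $f_*(\cF(th)) \cong f_*\cF \otimes \cO_{\P^2}(t) = 0$ by the projection formula.

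So it remains to prove $f_*\cF = 0$, which I expect to be the main obstacle. Pushing~\eqref{eq:def-cf1} forward and using $f_*\cI_{C,Y} \cong \cI_{L,\P^2} \cong \cO_{\P^2}(-1)$ (because $f$ restricts to an isomorphism $C \xrightarrow{\ \sim\ } L$ by~\eqref{eq:curve-c}), one obtains an exact triangle $\cO_{\P^2}(-1)[-1] \to f_*\cF \to \cO_{\P^2}(-1) \xrightarrow{\ \bar\delta\ } \cO_{\P^2}(-1)$ in which $\bar\delta$ is a scalar, so either $f_*\cF = 0$ (if $\bar\delta \neq 0$) or $f_*\cF \cong \cO_{\P^2}(-1) \oplus \cO_{\P^2}(-1)[-1]$ (if $\bar\delta = 0$); I must exclude the second alternative. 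For this I would restrict~\eqref{eq:def-cf1} to the fibre $X_s$ over a point $s \in L$: there $C$ meets $X_s$ in the single reduced point $c_s$, but not transversally inside the threefold $Y$, so $\Tor_1^Y(\cI_{C,Y}, \cO_{X_s}) = 0$ while $\cI_{C,Y} \otimes \cO_{X_s}$ carries a torsion subsheaf $\cO_{c_s}$; hence the restriction of~\eqref{eq:def-cf1} stays short exact, $0 \to \omega_{X_s} \to \cF|_{X_s} \to \cI_{C,Y} \otimes \cO_{X_s} \to 0$, with $\rH^0(X_s, \cI_{C,Y} \otimes \cO_{X_s}) = \rH^0(X_s, \cO_{c_s}) = \kk$ and $\rH^1(X_s, \cI_{C,Y} \otimes \cO_{X_s}) = 0$. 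Since $\cF|_{X_s}$ is locally free, hence torsion-free, the subsheaf $\cO_{c_s}$ cannot lift to $\cF|_{X_s}$, so the restriction to $\cO_{c_s}$ of the extension class of $\cF|_{X_s}$ is a non-zero element of $\Ext^1_{X_s}(\cO_{c_s}, \omega_{X_s}) \cong \kk$; the associated connecting map $\rH^0(X_s, \cO_{c_s}) \to \rH^1(X_s, \omega_{X_s})$ is, by Serre duality on the Gorenstein curve $X_s$, up to this non-zero scalar dual to the restriction $\rH^0(X_s, \cO_{X_s}) \to \rH^0(X_s, \cO_{c_s})$, which sends $1$ to $1$; hence it is an isomorphism and $\cF|_{X_s}$ is acyclic for every $s \in L$. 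Were $\bar\delta = 0$ we would instead have $R^1 f_*\cF \cong \cO_{\P^2}(-1)$ locally free, so cohomology and base change would give $\rH^0(X_s, \cF|_{X_s}) \cong \kk$ for \emph{every} $s$, contradicting the acyclicity just shown. Hence $\bar\delta$ is an isomorphism and $f_*\cF = 0$, completing the argument. The only genuinely geometric input is this last vanishing: one tests the extension against a fibre over $L = f(C)$, where the excess intersection of $C$ with that fibre produces torsion that local freeness of $\cF$ must absorb, and this pins down the connecting map $\bar\delta$ that the homological bookkeeping alone leaves undetermined.
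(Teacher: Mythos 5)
Your proof is correct, but it reaches the key point $f_*\cF = 0$ by a genuinely different route from the paper. The paper filters $\cI_{C,Y}$ by the sequence $0 \to \cO_Y(-h) \to \cI_{C,Y} \to \cI_{C,Y_L} \to 0$ with $Y_L = f^{-1}(L)$; this both computes $\Ext^\bullet(\cI_{C,Y},\cO_Y(2h-H)) \cong \kk[-1]$ (the term coming from $\cI_{C,Y_L}$ dies by adjunction and Serre duality because $f_*\cI_{C,Y_L}=0$) and, via a commutative diagram comparing \eqref{eq:def-cf1} with the twist of \eqref{eq:csb}, exhibits $\cF$ as an extension $0 \to \cF^0(-h) \to \cF \to \cI_{C,Y_L} \to 0$ of two objects of $\Ker(f_*)$, so $f_*\cF = 0$ is immediate. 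You instead compute the $\Ext^1$ through $\Ext^2(\cO_C,\cO_Y(2h-H))$ and the local-to-global spectral sequence (equivalent, and needed anyway for the Hartshorne--Serre local-freeness criterion, which the paper also invokes), and then obtain $f_*\cF=0$ by pushing \eqref{eq:def-cf1} forward and excluding the split alternative by a fibrewise analysis over $s \in L$, where the excess intersection of $C$ with $X_s$ produces the torsion $\Tor_1^Y(\cO_C,\cO_{X_s}) \cong \cO_{c_s}$ inside $\cI_{C,Y}\otimes\cO_{X_s}$ and local freeness of $\cF\vert_{X_s}$ forces the connecting map to be nonzero. I checked the delicate points: $\Tor_1^Y(\cI_{C,Y},\cO_{X_s}) \cong \Tor_2^Y(\cO_C,\cO_{X_s}) = 0$ (the Koszul complex of the two linear forms cutting out $s$, restricted to $C \cong L$, has no second cohomology), the Serre-duality identification of the connecting map $\rH^0(\cO_{c_s}) \to \rH^1(\omega_{X_s})$, and the base-change contradiction in the split case; all are sound. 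The trade-off is that the paper's argument is shorter and simultaneously relates $\cF$ to the canonical spinor bundle $\cF^0$ (an identification it uses later), whereas yours is self-contained and makes explicit the geometric mechanism by which the fibres over $L$ absorb the excess-intersection torsion. One small presentational point: when you conclude that a split pulled-back extension would embed $\cO_{c_s}$ into $\cF\vert_{X_s}$, you should say why this is impossible in the form that a locally free sheaf on the conic $X_s$ --- which may be singular or non-reduced, but is Gorenstein, hence Cohen--Macaulay of dimension $1$ --- has no nonzero subsheaves of finite length; ``torsion-free'' needs this interpretation when $X_s$ is non-reduced.
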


\begin{proof}
Since~$C \subset Y_L \coloneqq f^{-1}(L)$ and the ideal of~$Y_L$ is isomorphic to~$\cO_Y(-h)$, 
there is an exact sequence
\begin{equation*}
0 \to \cO_Y(-h) \to \cI_{C,Y} \to \cI_{C,Y_L} \to 0,
\end{equation*}
where~$\cI_{C,Y_L}$ is the ideal of~$C$ on~$Y_L$,
and since~$f\vert_C \colon C \to L$ is an isomorphism, we have~$f_*(\cI_{C,Y_L}) = 0$.
Hence, by adjunction, we have~$\Ext^\bullet(\cO_Y(2h), \cI_{C,Y_L}) = 0$, 
and then, using~\eqref{eq:ky} and Serre duality, we deduce the vanishing~$\Ext^\bullet(\cI_{C,Y_L}, \cO_Y(2h - H)) = 0$.
Now applying the functor~$\Ext^\bullet(-, \cO_Y(2h - H))$ to the above exact sequence, we conclude that
\begin{equation*}
\Ext^\bullet(\cI_{C,Y}, \cO_Y(2h - H)) \cong
\Ext^\bullet(\cO_Y(-h), \cO_Y(2h - H)) \cong
\rH^\bullet(Y, \cO_Y(3h - H)) \cong \kk[-1].
\end{equation*}
Therefore, we have a commutative diagram of unique non-split extensions
\begin{equation*}
\xymatrix@R=4ex{
0 \ar[r] & 
\cO_Y(2h - H) \ar[r] \ar@{=}[d] & 
\cF^0(-h) \ar[r] \ar[d] & 
\cO_Y(-h) \ar[r] \ar[d] & 
0
\\
0 \ar[r] & 
\cO_Y(2h - H) \ar[r] & 
\cF \ar[r] & 
\cI_{C,Y} \ar[r] & 
0,
}
\end{equation*}
where the top row is obtained from~\eqref{eq:csb} by a twist,
and the middle arrow extends to an exact sequence
\begin{equation*}
0 \to \cF^0(-h) \to \cF \to \cI_{C,Y_L} \to 0.
\end{equation*}
As we noticed above, $f_*(\cI_{C,Y_L}) = 0$, hence~$f_*(\cF) \cong f_*(\cF^0(-h)) = 0$,
because~$\cF^0$ is an abstract spinor bundle by Lemma~\ref{lem:cf0-cf1}.
Further, the bottom row implies that~$\rank(\cF) = 2$ and~$\rc_1(\cF) = 2h - H = K_{Y/\P^2} - h$,
hence to prove that~$\cF$ is an abstract spinor bundle we only need to check that it is locally free.
For this we show that~$\cF$ is the vector bundle of rank~2 obtained from~$C \subset Y$ by Serre's construction.

Indeed, using~\eqref{eq:h-c} and~\eqref{eq:cn-c-y}, we find~$\cO_Y(2h - H)\vert_C \cong \cO_C(2) \cong \det\cN^\vee_{C/Y}$,
while
\begin{equation*}
\rH^\bullet(Y, \cO_Y(2h - H)) = 
\rH^\bullet(Y, \omega_{Y/\P^2}(-h)) = 
\rH^{\bullet-1}(\P^2, \cO_{\P^2}(-1)) = 0,
\end{equation*}
hence Serre's construction is well defined and produces a vector bundle 
that can be represented as the unique non-split extension~\eqref{eq:def-cf1};
in particular, it coincides with~$\cF$ defined above.

The last statement follows from~$\rH^\bullet(Y, \cF(th)) = \rH^\bullet(\P^2, f_*\cF \otimes \cO_{\P^2}(t))$,
because~$\cF \in \Ker(f_*)$.
\end{proof}

Restricting~\eqref{eq:def-cf1} to~$C$ we obtain 
an epimorphism~$\cF\vert_C \twoheadrightarrow \cI_{C,Y} / \cI_{C,Y}^2 \cong \cN^\vee_{C/Y}$; therefore~\eqref{eq:cn-c-y} implies
\begin{equation}
\label{eq:cf1-c}
\cF\vert_C \cong \cO_C(1)^{\oplus 2}.
\end{equation}
Later we will show that~$\cF$ is exceptional, see Corollary~\ref{cor:cf-exc}.

\subsection{The spinor modification of~$Y$}

In this subsection we describe the spinor modification~$Y_\cF/\P^2$ of the conic bundle~$Y/\P^2$ 
with respect to the abstract spinor bundle~$\cF$ constructed in Lemma~\ref{lem:pf-cf1}.
Before giving an explicit description of~$Y_\cF$, we observe the following useful property.

\begin{lemma}
\label{lem:no-sections}
If\/~$Y_\cF/\P^2$ is the $\cF$-modification of~$Y/\P^2$, the map~$Y_\cF \to \P^2$ has no rational sections.
\end{lemma}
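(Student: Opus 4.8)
The plan is to show that a rational section of $Y_\cF \to \P^2$ would produce, via the Morita/Fourier–Mukai equivalences of Theorem~\ref{thm:asb-cb}, a corresponding structure on $Y/\P^2$ that is incompatible with the geometry recorded in~\S\ref{sec:af3-cb}. Concretely, a rational section of a conic bundle over a surface exists if and only if the generic fiber, a conic over the function field $\kk(\P^2)$, has a rational point, equivalently if and only if the even Clifford algebra $\Cliff_0(q_\cF)$ is split over the generic point, i.e.\ $\Cliff_0(q_\cF) \otimes_{\cO_{\P^2}} \kk(\P^2) \cong M_2(\kk(\P^2))$. Since $\Cliff_0(q_\cF) \sim \Cliff_0(q)$ as $\cO_{\P^2}$-algebras by Theorem~\ref{thm:asb-cb}\ref{it:cliff-morita}, and Morita equivalence of Azumaya algebras over a field is the same as isomorphism of their classes in the Brauer group, the generic fiber of $Y_\cF$ has a rational point if and only if the generic fiber of $Y$ does. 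So it suffices to show that $Y \to \P^2$ has no rational section.

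First I would record that $Y/\P^2$ is a \emph{nondegenerate} conic bundle over the generic point: its discriminant divisor has degree $6-k \in \{3,4,5\}$ by the remark after~\eqref{eq:cev}, in particular it is nonzero, so the generic fiber is a smooth conic and $\Cliff_0(q)\otimes\kk(\P^2)$ is a quaternion algebra. Thus the question is whether this quaternion algebra is split, equivalently whether the smooth plane conic $Y_\eta$ over $\kk(\P^2)$ has a $\kk(\P^2)$-point. Suppose it did; then $Y$ would be birational over $\P^2$ to $\P^2\times\P^1$, hence $Y$ (being a smooth projective model) would be rational. I would then invoke the geometry from~\cite{KP23}: $Y$ is a small resolution of a nonfactorial $1$-nodal \emph{prime} Fano threefold $X$ of genus $12$, $10$, $8$, or~$5$, and a key input is that such an $X$ (equivalently, its small resolution $Y$) is \emph{not} rational. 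Alternatively, and perhaps more self-containedly, one can argue via the discriminant: a standard theorem (Artin–Mumford / Beauville, applied to conic bundles over a rational surface) says that a rational section forces the double cover of the discriminant curve determined by the conic bundle to satisfy strong constraints; for an irreducible discriminant curve of degree $\ge 3$ with an appropriate étale double cover the conic bundle has no section unless the double cover is trivial, which contradicts the explicit description of the étale double cover coming from~\eqref{eq:cev} and the classification in~\cite{KP23}.

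The cleanest route, and the one I would carry out, is the first: combine Theorem~\ref{thm:asb-cb}\ref{it:cliff-morita} to reduce to $Y$, then cite non-rationality of these $1$-nodal prime Fano threefolds from~\cite{KP23} (a small resolution of a rational threefold is rational, and a conic bundle over $\P^2$ with a rational section is rational over $\P^2$, hence rational). So the chain is: rational section of $Y_\cF/\P^2$ $\implies$ $\Cliff_0(q_\cF)\otimes\kk(\P^2)$ split $\implies$ $\Cliff_0(q)\otimes\kk(\P^2)$ split (Morita invariance of the Brauer class over a field) $\implies$ $Y/\P^2$ has a rational section $\implies$ $Y$ rational $\implies$ $X$ rational, contradicting~\cite{KP23}.

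The main obstacle is pinning down exactly which fact from~\cite{KP23} to cite and making sure it covers all four types \typeo{12nb}, \typeo{10na}, \typeo{8nb}, \typeo{5n} uniformly; non-rationality of the specific nonfactorial $1$-nodal prime Fano threefolds is the crucial external input, and one must check that the references indeed establish it (or, failing that, fall back on the Artin–Mumford-style discriminant argument, which requires verifying that the discriminant curve together with its étale double cover — explicitly computable from~\eqref{eq:cev} — is nontrivial, a short but nonempty calculation). A secondary, purely formal point to get right is the equivalence ``rational section $\Leftrightarrow$ split generic even Clifford algebra'' for conic bundles, together with the fact that $\Cliff_0(q)$ is Azumaya over the generic point precisely because the discriminant is a proper divisor, so that Morita equivalence over $\kk(\P^2)$ becomes equality of Brauer classes; all of this is standard but should be stated carefully.
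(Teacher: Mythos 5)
Your reduction from $Y_\cF$ to $Y$ is sound and is essentially what the paper does (it invokes Lemma~\ref{lem:sm-bir}, which says a spinor modification is birational to $Y$ over $S$, proved exactly via the Morita equivalence of the generic quaternion algebras). The problem is the second half. Your main route is: rational section $\Rightarrow$ $Y$ rational $\Rightarrow$ contradiction with non-rationality of $X$. But non-rationality is \emph{false} for at least two of the four types. For type \typeo{12nb}, $Y$ is birational over $\P^2$ to $Y_\cF$, a $(2,1)$-divisor in $\P^2\times\P^2$, whose \emph{first} projection is a $\P^1$-bundle, so $Y$ (and hence $X$) is rational; for type \typeo{10na}, $Y_\cF$ is a conic bundle over $\P^2$ with quartic discriminant, which is rational by Iskovskikh's criterion. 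More fundamentally, rationality of the total space of a conic bundle says nothing about the existence of a rational section: the generic fiber can be a pointless conic even when $Y$ is rational through some other fibration (as it is here). So the implication you need is not available, and the external input you propose to cite does not exist.

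The obstruction the paper actually uses is arithmetic in $\Pic(Y)$: a rational section would give a divisor class meeting the fiber class $\Upsilon$ with multiplicity $1$, but by~\eqref{eq:picy} one has $\Pic(Y)=\ZZ H\oplus\ZZ h$ with $H\cdot\Upsilon=2$ and $h\cdot\Upsilon=0$, so every divisor class meets $\Upsilon$ evenly. (Equivalently: the Brauer class of the generic fiber is nontrivial because the generic conic has index $2$, which is read off from $\Pic(Y)$; this is the content your ``fallback'' Artin--Mumford-style argument would have to establish, but you leave that computation undone, and it is the whole point of the lemma.) To repair your write-up, replace the rationality step by this parity argument, which uses only $\Pic(Y)=\ZZ H\oplus\ZZ h$ (a consequence of $X$ being prime and $1$-nodal) and the intersection numbers of $H$ and $h$ with a fiber.
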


\begin{proof}
If~$Y_\cF \to \P^2$ has a rational section then by Lemma~\ref{lem:sm-bir} the conic bundle~$Y \to \P^2$ 
also has a rational section~$\P^2 \dashrightarrow Y$.
Thus, $\Pic(Y)$ contains a divisor class which has intersection index~$1$ 
with the numerical class~$\Upsilon$ of fibers of~$Y \to \P^2$.
This, however, contradicts~\eqref{eq:picy}, because~$H \cdot \Upsilon = 2$ and~$h \cdot \Upsilon = 0$. 
\end{proof}

Recall that~$\cEnd^0(\cF)$ denotes the trace-free part of the endomorphism bundle~$\cEnd(\cF)$.

\begin{proposition}
\label{prop:good-cb0}
Let~$Y$ be a conic bundle of type~\typeo{12nb}, \typeo{10na}, or~\typeo{8nb}.
If~$k$ is defined by~\eqref{eq:k} then
\begin{equation}
\label{eq:fs-cend-cf}
f_*\cEnd^0(\cF) \cong \cO_{\P^2}(-1)^{\oplus k} \oplus \cO_{\P^2}(-2)^{\oplus (3 - k)}.
\end{equation}
In particular, the conic bundle~$Y_\cF \to \P^2$ 
corresponds to a quadratic form~$q_\cF \colon \cL_\cF \to \Sym^2\cE_\cF^\vee$, where
\begin{equation}
\label{eq:yp}
\left\{
\begin{aligned}
& \cL_\cF \cong \cO_{\P^2}(-1), 
&& \cE_\cF \cong \cO_{\P^2} \oplus \cO_{\P^2} \oplus \cO_{\P^2}, 
&& \text{for type~\typeo{12nb}},\\
& \cL_\cF \cong \cO_{\P^2}, 
&& \cE_\cF \cong \cO_{\P^2}(-1) \oplus \cO_{\P^2}(-1) \oplus \cO_{\P^2}, 
&& \text{for type~\typeo{10na}},\\
& \cL_\cF \cong \cO_{\P^2}(-1), 
&& \cE_\cF \cong \cO_{\P^2}(-1) \oplus \cO_{\P^2} \oplus \cO_{\P^2}, 
&& \text{for type~\typeo{8nb}},\\
\end{aligned}
\right.
\end{equation}
and the total space~$Y_\cF \subset \P_{\P^2}(\cE_\cF)$ of this conic bundle is smooth.
\end{proposition}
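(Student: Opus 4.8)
The plan is to compute the locally free $\cO_{\P^2}$-algebra $\cR_\cF \coloneqq f_*\cEnd(\cF)$; since $\cEnd(\cF) \cong \cO_Y \oplus \cEnd^0(\cF)$, this is equivalent to proving~\eqref{eq:fs-cend-cf}. By Proposition~\ref{prop:cf-cliff} we already know that $\cR_\cF$ is a pointwise Clifford algebra, hence locally free of rank~$4$, so $f_*\cEnd^0(\cF)$ is locally free of rank~$3$ and all higher direct images of $\cEnd(\cF)$ vanish; this purity will be used repeatedly below. Once~\eqref{eq:fs-cend-cf} is known, the remaining assertions are quick. Theorem~\ref{thm:asb-cb} gives $\cL_\cF \cong \det(f_*\cEnd^0(\cF)) \cong \cO_{\P^2}(k-6)$ and $\cE_\cF \cong (f_*\cEnd^0(\cF))^\vee \cong \cO_{\P^2}(1)^{\oplus k}\oplus\cO_{\P^2}(2)^{\oplus(3-k)}$, and replacing $q_\cF$ by its twist (in the sense of Remark~\ref{rem:normalized-twist}) by $\cO_{\P^2}(1)$ for type~\typeo{12nb} and by $\cO_{\P^2}(2)$ for types~\typeo{10na} and~\typeo{8nb} — which changes neither the conic bundle nor its total space $Y_\cF$ — yields exactly~\eqref{eq:yp}. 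Smoothness of $Y_\cF$ needs no further computation: $Y$ is smooth over $\kk$ (a small resolution of a $1$-nodal threefold), $\P^2$ is smooth, so $Y_\cF$ is smooth by Proposition~\ref{prop:xprime-smooth}\ref{it:sm}.

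To compute $\cR_\cF$ I would run a short chain of direct image computations, using that $\cF$ has rank~$2$ with $\rc_1(\cF) = 2h - H$, so $\cF^\vee \cong \cF(H-2h)$, and the basic pushforwards $f_*\cO_Y \cong \cO_{\P^2}$, $f_*\cO_Y(H) \cong \cE^\vee$ (the fibers are conics, so the higher direct images vanish), $Rf_*\cO_C(a) \cong \cO_L(a)$ (recall $f\vert_C \colon C \xrightarrow{\sim} L$), and $Rf_*\cF = 0$. Tensoring~\eqref{eq:def-cf1} by the locally free sheaf $\cF^\vee$ and using $\cF^\vee(2h-H) \cong \cF$ gives $0 \to \cF \to \cEnd(\cF) \to \cF^\vee\otimes\cI_{C,Y} \to 0$, hence $\cR_\cF \cong Rf_*(\cF^\vee\otimes\cI_{C,Y})$. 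Tensoring $0 \to \cI_{C,Y} \to \cO_Y \to \cO_C \to 0$ by $\cF^\vee$ and noting $\cF^\vee\vert_C \cong \cF\vert_C(-2) \cong \cO_C(-1)^{\oplus 2}$ (by~\eqref{eq:cf1-c} and~\eqref{eq:h-c}), I obtain
\begin{equation*}
0 \to \cR_\cF \to Rf_*\bigl(\cF(H-2h)\bigr) \to \cO_L(-1)^{\oplus 2} \to 0,
\end{equation*}
where $Rf_*(\cF(H-2h))$ is again a sheaf by purity of $\cR_\cF$. Finally, twisting~\eqref{eq:def-cf1} by $\cO_Y(H-2h)$ and pushing forward, together with $0 \to \cI_{C,Y}(H-2h) \to \cO_Y(H-2h) \to \cO_C(-2) \to 0$, I get
\begin{equation*}
0 \to \cO_{\P^2} \to Rf_*\bigl(\cF(H-2h)\bigr) \to \cG(-2) \to 0,
\qquad
\cG \coloneqq \ker\bigl(\cE^\vee \to \cO_L\bigr),
\end{equation*}
where the map $\cE^\vee \to \cO_L$ is the epimorphism of~\eqref{eq:cev} — this requires identifying the map coming from restriction of relative hyperplane sections to $C$ with the one in~\eqref{eq:cev}, which follows from the description~\eqref{eq:curve-c} of $C$ as the section of $\P_L(\cE\vert_L) \to L$ dual to that epimorphism — and $\cG \cong \Coker\bigl(\cO_{\P^2}^{\oplus(k-1)} \to \cO_{\P^2}(1)^{\oplus(k+2)}\bigr)$ by~\eqref{eq:cev}.

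It then remains to read~\eqref{eq:fs-cend-cf} off from these three exact sequences. A Chern class computation already forces $\rc_1(\cR_\cF) = (k-6)h$, and the cohomology long exact sequences give $\rH^\bullet(\P^2,\cR_\cF) = \kk$ (in degree $0$), hence $\rH^\bullet(\P^2, f_*\cEnd^0(\cF)) = 0$; one computes $\rH^\bullet(\P^2, f_*\cEnd^0(\cF)(t))$ for all $t$ the same way, all cohomology coming from the line bundles $\cO_{\P^2}(\ast)$ and $\cO_L(\ast)$ that occur. For type~\typeo{8nb} ($k=1$) the term $\cO_{\P^2}^{\oplus(k-1)}$ is absent, $\cG\cong\cO_{\P^2}(1)^{\oplus 3}$, and everything splits formally, giving $f_*\cEnd^0(\cF)\cong\cO_{\P^2}(-1)\oplus\cO_{\P^2}(-2)^{\oplus 2}$. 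The step I expect to be the \emph{main obstacle} is the analogous splitting for types~\typeo{10na} and~\typeo{12nb}: there the abstract sequence~\eqref{eq:cev} does not determine $\cE$, and one must use that for these specific conic bundles the matrix of linear forms defining $\cO_{\P^2}^{\oplus(k-1)} \to \cO_{\P^2}(1)^{\oplus(k+2)}$ is sufficiently nondegenerate (equivalently, that $\Ext^1_{\P^2}(\cG, \cO_{\P^2}(2)) = 0$ and the residual extension in the first displayed sequence splits). Granting the splitting, $\rc_1$, $\rc_2$ and the computed cohomology pin $f_*\cEnd^0(\cF)$ down to $\cO_{\P^2}(-1)^{\oplus k}\oplus\cO_{\P^2}(-2)^{\oplus(3-k)}$ — cancelling the $\cO_{\P^2}$ summand coming from $\cR_\cF = \cO_{\P^2}\oplus f_*\cEnd^0(\cF)$ is legitimate by Krull--Schmidt, since $\cO_{\P^2}(-1)$ and $\cO_{\P^2}(-2)$ have no $\rH^1$ — which is~\eqref{eq:fs-cend-cf}, and then~\eqref{eq:yp} follows as explained in the first paragraph.
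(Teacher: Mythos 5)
Your setup is essentially the paper's: the three exact sequences you derive (with $\cK \coloneqq \cG(-2)$ in the paper's notation) are exactly the ones the paper uses, and your reductions to~\eqref{eq:yp} and to smoothness via Proposition~\ref{prop:xprime-smooth} are correct. But the step you yourself flag as ``the main obstacle'' --- proving that $f_*\cEnd^0(\cF)$ actually splits as $\cO_{\P^2}(-1)^{\oplus k}\oplus\cO_{\P^2}(-2)^{\oplus(3-k)}$ for $k=2,3$ --- is the entire content of the proposition beyond bookkeeping, and you do not close it. Moreover, the route you sketch (establishing that the matrix of linear forms in~\eqref{eq:cev} is ``sufficiently nondegenerate'' for these specific conic bundles) is not carried out and is not obviously available from~\cite{KP23}; a rank-$3$ bundle on $\P^2$ with the right Chern classes and vanishing cohomology need not be split, so without the splitting the Krull--Schmidt endgame does not start.

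The paper closes this gap without ever needing nondegeneracy of~\eqref{eq:cev}. It first observes that the composition $\cO_{\P^2}(-1)^{\oplus(k+2)} \twoheadrightarrow \cK \to \cO_L(-1)^{\oplus 2}$ is surjective and given by a \emph{constant} matrix (as $\Hom(\cO_{\P^2}(-1),\cO_L(-1))=\kk$), so its kernel is $\cO_{\P^2}(-1)^{\oplus k}\oplus\cO_{\P^2}(-2)^{\oplus 2}$; a diagram chase then yields a presentation
\begin{equation*}
0 \to \cO_{\P^2}(-2)^{\oplus (k-1)} \to \cO_{\P^2}(-1)^{\oplus k}\oplus\cO_{\P^2}(-2)^{\oplus 2} \to f_*\cEnd^0(\cF) \to 0 .
\end{equation*}
The component $\cO_{\P^2}(-2)^{\oplus(k-1)}\to\cO_{\P^2}(-2)^{\oplus 2}$ is again constant, of some rank $r$, and \emph{local freeness} of $f_*\cEnd^0(\cF)$ (it is a direct summand of the locally free pointwise Clifford algebra $\cR_\cF$, exactly the purity you invoke) forces $k-1-r\le\max(k-2,0)$. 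For $k\in\{1,2\}$ this pins down $r=k-1$ and the splitting follows formally; for $k=3$ one residual case ($r=1$) survives, giving $\cE_\cF\cong\cO_{\P^2}(2)\oplus\Omega_{\P^2}(2)$, and this is excluded by the geometric input of Lemma~\ref{lem:no-sections}: the $\cO_{\P^2}(2)$-summand would produce a section of $Y_\cF\to\P^2$. So the missing ingredients in your argument are precisely (i) exploiting that the relevant maps to $\cO_L(-1)^{\oplus 2}$ and between $\cO_{\P^2}(-2)$-summands are constant matrices, reducing the splitting question to a rank computation controlled by local freeness, and (ii) Lemma~\ref{lem:no-sections} to kill the one genuinely ambiguous case in type \typeo{12nb}. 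Without (i) and (ii) the proposal does not prove~\eqref{eq:fs-cend-cf}.
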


Note that these conic bundles are examples of conic bundles 
of type~$\bF^2_3$, $\bF^2_4$, and~$\bF^{2-}_5$ from~\cite[\S8]{CI}.

\begin{proof}
We start by computing~$f_*(\cF^\vee) \cong f_*(\cF(H-2h))$.
For this we twist~\eqref{eq:def-cf1} by~$\cO_Y(H-2h)$ and pushing it forward to~$\P^2$, we obtain a distinguished triangle
\begin{equation*}
\cO_{\P^2} \to f_*(\cF(H-2h)) \to f_*(\cI_{C,Y}(H-2h)).
\end{equation*}
Similarly, twisting the exact sequence~$0 \to \cI_{C,Y} \to \cO_Y \to \cO_C \to 0$ and pushing it forward, we obtain
\begin{equation*}
f_*(\cI_{C,Y}(H-2h)) \to \cE^\vee(-2) \to \cO_L(-2).
\end{equation*}
By definition~\eqref{eq:curve-c} of the curve~$C \subset Y$, the last arrow here 
coincides with a twist of the last arrow in~\eqref{eq:cev}.
In particular, it is surjective, and we obtain an exact sequence
\begin{equation}
\label{eq:fs-fvee}
0 \to \cO_{\P^2} \to f_*(\cF(H-2h)) \to \cK \to 0,
\end{equation}
where~$\cK$ is the vector bundle of rank~3 on~$\P^2$ that fits into an exact sequence
obtained from~\eqref{eq:cev} by truncation and twist:
\begin{equation}
\label{eq:ck-seq}
0 \to \cO_{\P^2}(-2)^{\oplus (k - 1)} \to \cO_{\P^2}(-1)^{\oplus (k + 2)} \to \cK \to 0.
\end{equation} 
The last sequence immediately implies the following cohomology vanishing
\begin{equation}
\label{eq:hb-ck}
\rH^\bullet(\P^2,\cK) = 0.
\end{equation} 

Now we tensor~\eqref{eq:def-cf1} by~$\cF^\vee \cong \cF(H-2h)$ and obtain an exact sequence
\begin{equation*}
0 \to \cF \to \cEnd(\cF) \to \cF(H - 2h) \to \cF(H-2h) \otimes \cO_C \to 0.
\end{equation*}
By Lemma~\ref{lem:pf-cf1} we have~$f_*\cF = 0$, hence we obtain an exact sequence
\begin{equation*}
0 \to f_*\cEnd(\cF) \to f_*(\cF(H - 2h)) \to f_*(\cF(H-2h) \otimes \cO_C) \to 0.
\end{equation*}
Since~$f\vert_C \colon C \to L$ is an isomorphism, it follows from~\eqref{eq:h-c} and~\eqref{eq:cf1-c} 
that the last term is~$\cO_L(-1)^{\oplus 2}$.
On the other hand, the first term splits as~$f_*\cEnd(\cF) \cong \cO_{\P^2} \oplus f_*\cEnd^0(\cF)$
and the second term is the extension~\eqref{eq:fs-fvee}.
Since~$\rH^0(\P^2, \cK) = 0$ by~\eqref{eq:hb-ck}, we conclude that the first summand~$\cO_{\P^2}$ of~$f_*\cEnd(\cF)$ 
is mapped isomorphically onto the first term of~\eqref{eq:fs-fvee},
hence we obtain an exact sequence
\begin{equation}
\label{eq:end-cf-ck}
0 \to f_*\cEnd^0(\cF) \to \cK \to \cO_L(-1)^{\oplus 2} \to 0.
\end{equation}
Clearly, the composition~$\cO_{\P^2}(-1)^{\oplus (k + 2)} \to \cK \to \cO_L(-1)^{\oplus 2}$ 
of the second arrows in~\eqref{eq:ck-seq} and~\eqref{eq:end-cf-ck} is surjective 
and its kernel is isomorphic to~$\cO_{\P^2}(-1)^{\oplus k} \oplus \cO_{\P^2}(-2)^{\oplus 2}$,
hence we have an exact sequence
\begin{equation}
\label{eq:end-cf}
0 \to \cO_{\P^2}(-2)^{\oplus (k - 1)} \to \cO_{\P^2}(-1)^{\oplus k} \oplus \cO_{\P^2}(-2)^{\oplus 2} \to f_*\cEnd^0(\cF) \to 0.
\end{equation}
Consider the component~$\cO_{\P^2}(-2)^{\oplus (k - 1)} \to \cO_{\P^2}(-2)^{\oplus 2}$ of the first map.
It is given by a constant matrix, hence it has constant rank~$r \le \min(k - 1,2) = k - 1$ (see~\eqref{eq:k}), 
hence~$f_*\cEnd^0(\cF)$ is isomorphic to the direct sum of~$\cO_{\P^2}(-2)^{\oplus (2 - r)}$
and the cokernel of a morphism~$\cO_{\P^2}(-2)^{\oplus (k - 1 - r)} \to \cO_{\P^2}(-1)^{\oplus k}$.
Since~$f_*\cEnd^0(\cF)$ is locally free, we conclude that~$k - 1 - r \le \max(k - 2,0)$.

First, consider the case where~$k - 1 - r = 0$.
Then~\eqref{eq:fs-cend-cf} follows immediately from the above arguments.
Moreover, in this case Theorem~\ref{thm:asb-cb} implies that
\begin{equation*}
\cL_\cF \cong \det(f_*\cEnd^0(\cF)) \cong \cO_{\P^2}(k - 6)
\qquad\text{and}\qquad 
\cE_\cF \cong (f_*\cEnd^0(\cF))^\vee \cong \cO_{\P^2}(1)^{\oplus k} \oplus \cO_{\P^2}(2)^{\oplus (3 - k)}.
\end{equation*}
Twisting by~$\cO_{\P^2}(-1)$, if~$k = 3$ and by~$\cO_{\P^2}(-2)$, if~$k \in \{1,2\}$, we obtain~\eqref{eq:yp}.
Finally, the smoothness of~$Y_\cF$ follows from the smoothness of~$Y$ by Proposition~\ref{prop:xprime-smooth}.

It remains to exclude the case where~$k - 2 \ge k - 1 - r > 0$.
Since~$k \le 3$ by~\eqref{eq:k}, it follows that~$k = 3$ and~$r = 1$.
In this case~$f_*\cEnd^0(\cF)$ is the direct sum of~$\cO_{\P^2}(-2)$ 
and the cokernel of a morphism~$\cO_{\P^2}(-2) \to \cO_{\P^2}(-1)^{\oplus 3}$.
Since~$f_*\cEnd^0(\cF)$ must be locally free, the morphism must be isomorphic to a twist of the tautological embedding,
and using Theorem~\ref{thm:asb-cb} we conclude that
\begin{equation*}
\cL_\cF \cong \det(f_*\cEnd^0(\cF)) \cong \cO_{\P^2}(-3)
\qquad\text{and}\qquad 
\cE_\cF \cong (f_*\cEnd^0(\cF))^\vee \cong \cO_{\P^2}(2) \oplus \Omega_{\P^2}(2).
\end{equation*}
But then the section of~$\P_{\P^2}(\cE_\cF)$ corresponding to the summand~$\cO_{\P^2}(2)$ is contained in~$Y_\cF$,
hence the morphism~$Y_\cF \to \P^2$ has a section in contradiction to Lemma~\ref{lem:no-sections}.
This contradiction shows that this case is impossible, and completes the proof of the proposition.
\end{proof}

\begin{corollary}
\label{cor:cf-exc}
The abstract spinor bundle~$\cF$ constructed in Lemma~\textup{\ref{lem:pf-cf1}} is exceptional 
and~$\rH^\bullet(Y, \cF^\vee) = \kk$.
\end{corollary}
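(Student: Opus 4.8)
The plan is to push everything down to~$\P^2$ and reuse the cohomology computations already made in the proof of Proposition~\ref{prop:good-cb0}.

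First I would note that, since~$\cF$ is locally free of rank~$2$, one has $\cRHom_{\cO_Y}(\cF,\cF) = \cEnd(\cF)$, a sheaf concentrated in degree~$0$, and hence by the projection formula and the Leray spectral sequence
\[
\Ext^\bullet_Y(\cF,\cF) \cong \rH^\bullet(Y,\cEnd(\cF)) \cong \rH^\bullet(\P^2, f_*\cEnd(\cF)) = \rH^\bullet(\P^2,\cR_\cF),
\]
where $\cR_\cF = f_*\cEnd(\cF)$ is the pointwise Clifford algebra of Proposition~\ref{prop:cf-cliff} (concentrated in degree~$0$ since $\Phi^!_\cF$ is t-exact). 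Using the trace decomposition $\cR_\cF \cong \cO_{\P^2} \oplus f_*\cEnd^0(\cF)$ together with the identification
\[
f_*\cEnd^0(\cF) \cong \cO_{\P^2}(-1)^{\oplus k} \oplus \cO_{\P^2}(-2)^{\oplus(3-k)}
\]
of Proposition~\ref{prop:good-cb0}, and the vanishings $\rH^\bullet(\P^2,\cO_{\P^2}(-1)) = \rH^\bullet(\P^2,\cO_{\P^2}(-2)) = 0$, I obtain $\rH^\bullet(\P^2,\cR_\cF) \cong \rH^\bullet(\P^2,\cO_{\P^2}) = \kk$, which is exactly the statement that~$\cF$ is exceptional.

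For the second assertion I would use that~$\cF$ has rank~$2$ with $\rc_1(\cF) = 2h - H$ (Lemma~\ref{lem:pf-cf1}), so that the pairing $\cF \otimes \cF \to \det\cF$ gives $\cF^\vee \cong \cF \otimes \cO_Y(H-2h) = \cF(H-2h)$; hence $\rH^\bullet(Y,\cF^\vee) \cong \rH^\bullet(\P^2, f_*(\cF(H-2h)))$. But $f_*(\cF(H-2h))$ was already computed in the proof of Proposition~\ref{prop:good-cb0}: it fits into the short exact sequence~\eqref{eq:fs-fvee}
\[
0 \to \cO_{\P^2} \to f_*(\cF(H-2h)) \to \cK \to 0,
\]
whose third term satisfies $\rH^\bullet(\P^2,\cK) = 0$ by~\eqref{eq:hb-ck}. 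The associated long exact sequence then yields $\rH^\bullet(\P^2, f_*(\cF(H-2h))) \cong \rH^\bullet(\P^2,\cO_{\P^2}) = \kk$, as claimed.

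There is no genuine obstacle here: all the real work has been done in Proposition~\ref{prop:good-cb0}, and this corollary is just a formal consequence. The only points worth a word of care are the legitimacy of the splitting $\cR_\cF \cong \cO_{\P^2} \oplus f_*\cEnd^0(\cF)$ (valid because the characteristic of~$\kk$ is different from~$2$, so the trace splits off the scalar subsheaf) and the isomorphism $\cF^\vee \cong \cF(H-2h)$, which is special to rank-$2$ bundles.
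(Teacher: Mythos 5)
Your proposal is correct and follows essentially the same route as the paper: the exceptionality is obtained from~$\Ext^\bullet(\cF,\cF) \cong \rH^\bullet(\P^2, \cO_{\P^2} \oplus f_*\cEnd^0(\cF))$ together with~\eqref{eq:fs-cend-cf}, and the second assertion from~$\cF^\vee \cong \cF(H-2h)$ combined with~\eqref{eq:fs-fvee} and~\eqref{eq:hb-ck}. You merely spell out a few details (the trace splitting and the rank-two self-duality) that the paper leaves implicit.
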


\begin{proof}
Indeed,
$\Ext^\bullet(\cF, \cF) \cong 
\rH^\bullet(\P^2, f_*\cEnd(\cF)) \cong 
\rH^\bullet(\P^2, \cO_{\P^2} \oplus f_*\cEnd^0(\cF)) \cong 
\kk$,
where we use~\eqref{eq:fs-cend-cf}.
Similarly, a combination of~\eqref{eq:fs-fvee} and~\eqref{eq:hb-ck} 
implies that~$\rH^\bullet(Y, \cF^\vee) = \rH^\bullet(Y, \cF(H - 2h)) = \kk$.
\end{proof}

Using the description of~$\cE_\cF$ and~$\cL_\cF$ in Proposition~\ref{prop:good-cb0} we can also interpret~$Y_\cF$ geometrically.

\begin{corollary}
\label{cor:yprime-12-10-8}
Let~$Y_\cF/\P^2$ be the $\cF$-modification of~$Y/\P^2$.
Then
\begin{itemize}[wide]
\item 
$Y_\cF \subset \P^2 \times \P^2 \xrightarrow{\ \pr_2\ } \P^2$ is a hypersurface of bidegree~$(2,1)$ 
for type~\typeo{12nb};
\item 
$Y_\cF \to \P^1 \times \P^2 \xrightarrow{\ \pr_2\ } \P^2$ is a double covering with branch divisor of bidegree~$(2,2)$
for type~\typeo{10na}.
\item 
$Y_\cF = \Bl_\ell(\bar{Y}) \xrightarrow{\ \ \ \ \ \ } \P^2$ is the blowup 
of a smooth cubic threefold~$\bar{Y}$ in a line~$\ell \subset \bar{Y}$
for type~\typeo{8nb}.
\end{itemize}
In particular, $Y_\cF$ is a smooth Fano threefold 
of type~\typemm{2}{24}, or~\typemm{2}{18}, or~\typemm{2}{11}, respectively.
\end{corollary}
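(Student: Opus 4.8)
The plan is to combine the explicit description of $\cE_\cF$ and $\cL_\cF$ supplied by Proposition~\ref{prop:good-cb0} (formula~\eqref{eq:yp}) with the fact, recorded there, that $Y_\cF$ is the zero locus in the split $\P^2$-bundle $\P_{\P^2}(\cE_\cF)$ of a section of $\cO_{\P_{\P^2}(\cE_\cF)/\P^2}(2)\otimes p^*\cL_\cF^\vee$, and then to recognise the resulting threefold in each of the three cases. Throughout, let $\xi$ denote the relative hyperplane class of $\P_{\P^2}(\cE_\cF)$ and $h$ the hyperplane class of $\P^2$ and its pullback $p^*h$, and recall that a rank-$2$ subbundle $\cK\subset\cE_\cF$ with line-bundle quotient $\cE_\cF/\cK$ cuts out a divisor $\P_{\P^2}(\cK)\subset\P_{\P^2}(\cE_\cF)$ of class $\xi + p^*\rc_1(\cE_\cF/\cK)$. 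The threefold $Y_\cF$ is smooth by Proposition~\ref{prop:xprime-smooth}, and in each case the explicit model obtained below is evidently one of the smooth Fano threefolds of Picard rank~$2$ on the Mori--Mukai list; identifying the precise number (and checking that $-K_{Y_\cF}$ is ample) is then routine, e.g.\ by computing $(-K_{Y_\cF})^3$ and noting $\rank\Pic(Y_\cF)=2$. The case of type~\typeo{12nb} is immediate: here $\cE_\cF\cong\cO_{\P^2}^{\oplus 3}$, so $\P_{\P^2}(\cE_\cF)\cong\P^2\times\P^2$ with $p$ one of the two projections, and since $\cL_\cF\cong\cO_{\P^2}(-1)$ the divisor $Y_\cF$ has class $2\xi+p^*h$, i.e.\ is a hypersurface of bidegree $(2,1)$, with the conic bundle $f_\cF$ being the other projection; a smooth such hypersurface is the Fano threefold of type~\typemm{2}{24}.

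For type~\typeo{10na} we have $\cL_\cF\cong\cO_{\P^2}$ and $\cE_\cF\cong\cK\oplus\cO_{\P^2}$ with $\cK=\cO_{\P^2}(-1)^{\oplus 2}$. The component of $q_\cF$ landing in the summand $(\cO_{\P^2})^{\otimes -2}\subset\Sym^2\cE_\cF^\vee$ is a morphism $\cO_{\P^2}\to\cO_{\P^2}$; it cannot vanish, for otherwise the section $\P_{\P^2}(\cO_{\P^2})\subset\P_{\P^2}(\cE_\cF)$ would be contained in $Y_\cF$, giving a regular section of $f_\cF$ and contradicting Lemma~\ref{lem:no-sections}. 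Hence it is an isomorphism, and completing the square in the corresponding coordinate (legitimate since we work in characteristic zero) presents $Y_\cF$ as a double covering of $\P_{\P^2}(\cK)\cong\P^1\times\P^2$ branched over the divisor $\{Q=0\}$, where $Q\colon\cL_\cF\to\Sym^2\cK^\vee$ is the residual quadratic form. Tracking twists, $\{Q=0\}$ turns out to have bidegree $(2,2)$, and since $Y_\cF$ is smooth the branch divisor is smooth; thus $Y_\cF$ is a smooth double covering of $\P^1\times\P^2$ branched over a $(2,2)$-divisor, which is the Fano threefold of type~\typemm{2}{18}, and composing with $\pr_2$ recovers $f_\cF$.

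For type~\typeo{8nb} we have $\cL_\cF\cong\cO_{\P^2}(-1)$ and $\cE_\cF\cong\cO_{\P^2}(-1)\oplus\cO_{\P^2}^{\oplus 2}$, so $\cE_\cF^\vee\cong\cO_{\P^2}(1)\oplus\cO_{\P^2}^{\oplus 2}$ is globally generated with $h^0(\P^2,\cE_\cF^\vee)=5$. Therefore the complete linear system $|\xi|$ is base point free and defines a birational morphism $\Psi\colon\P_{\P^2}(\cE_\cF)\to\P^4$ with $\Psi^*\cO_{\P^4}(1)=\xi$; it contracts the divisor $D:=\P_{\P^2}(\cO_{\P^2}^{\oplus 2})$, which by the class formula above has class $\xi-p^*h$, onto a line $\ell\subset\P^4$ (on $D\cong\P^1\times\P^2$ the class $\xi$ is the pullback of $\cO_{\P^1}(1)$, so $\Psi$ collapses $D$ to a degree-one curve), and is therefore the blow-up $\Bl_\ell\P^4$ with exceptional divisor $D$. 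Since $\cL_\cF\cong\cO_{\P^2}(-1)$, the divisor $Y_\cF$ has class $2\xi+p^*h=3\xi-D=\Psi^*\cO_{\P^4}(3)-D$, hence its image $\bar Y:=\Psi(Y_\cF)$ is a cubic hypersurface in $\P^4$ containing $\ell$ with multiplicity one, and $Y_\cF$ is the strict transform of $\bar Y$, i.e.\ $Y_\cF\cong\Bl_\ell\bar Y$; smoothness of $\bar Y$ follows from that of $Y_\cF$ (Proposition~\ref{prop:xprime-smooth}). Thus $Y_\cF$ is the blow-up of a smooth cubic threefold in a line, which is the Fano threefold of type~\typemm{2}{11}.

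The main obstacle is the type~\typeo{8nb} computation: one must handle carefully the conventions for $\P_{\P^2}(\cE_\cF)$ (sub versus quotient bundles, and the various twists) in order to identify $\P_{\P^2}(\cO_{\P^2}(-1)\oplus\cO_{\P^2}^{\oplus 2})$ with $\Bl_\ell\P^4$, to pin down the class $\xi-p^*h$ of the exceptional divisor, and to see that the strict transform of the cubic $\bar Y$, rather than some other multiple of it, is exactly $Y_\cF$. The type~\typeo{10na} case involves a milder version of the same bookkeeping, namely checking that the branch divisor really acquires bidegree $(2,2)$ after the square has been completed. Once the explicit models are available, matching each to the asserted Mori--Mukai number and confirming the ampleness of $-K_{Y_\cF}$ is standard.
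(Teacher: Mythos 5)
Your overall route is the same as the paper's: read off~$\cE_\cF$ and~$\cL_\cF$ from Proposition~\ref{prop:good-cb0}, identify~$\P_{\P^2}(\cE_\cF)$ explicitly, and use Lemma~\ref{lem:no-sections} to exclude the degenerate configurations. The \typeo{12nb} case is identical to the paper's. For \typeo{10na} your ``complete the square and project to~$\P_{\P^2}(\cK)$'' is just the coordinate version of the paper's projection from the vertex of~$\Cone(\P^1\times\P^2)\subset\P^6$; both hinge on the same application of Lemma~\ref{lem:no-sections} (the coefficient of~$z^2$ is nonzero iff the section~$\P_{\P^2}(\cO_{\P^2})$ misses~$Y_\cF$), and your bidegree count for the discriminant is correct.

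The gap is in the \typeo{8nb} case, at the sentence ``smoothness of~$\bar Y$ follows from that of~$Y_\cF$ (Proposition~\ref{prop:xprime-smooth}).'' That proposition gives smoothness of~$Y_\cF$ only; the implication ``strict transform smooth~$\Rightarrow$ cubic smooth'' is clear away from~$\ell$ (where~$\Psi$ is an isomorphism) but is not automatic along~$\ell$, and you offer no argument there. Concretely, if~$\bar Y$ were singular at a point~$p\in\ell$, the fiber of~$E=Y_\cF\cap D\to\ell$ over~$p$ would be the whole plane~$\{p\}\times\P^2$ rather than a line, and one must rule this out. The paper does so via Lemma~\ref{lem:no-sections}: such a plane is a fiber of~$D\to\ell$, hence maps isomorphically to~$\P^2$ under the conic bundle projection and would give a section of~$Y_\cF\to\P^2$; therefore~$E\to\ell$ is a~$\P^1$-bundle, $\bar Y$ is smooth along~$\ell$, and only then does smoothness of~$Y_\cF$ give smoothness of~$\bar Y$ everywhere. (One can alternatively show directly that if all three partials~$G_i$ of the cubic vanish at~$p\in\ell$ then the strict transform is forced to be singular somewhere on~$\{p\}\times\P^2$ by a Bezout argument on the linear and quadratic parts of the local equation, but some such argument is needed; you list the divisor-class bookkeeping as the main obstacle and do not flag this point at all.) The same lemma is also what guarantees that~$\ell\subset\bar Y$ is a local complete intersection, so that~$Y_\cF\to\bar Y$ really is the blowup along~$\ell$. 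With this step supplied, the proof is complete.
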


\begin{proof}
The smoothness of~$Y_\cF$ in all cases is explained in Proposition~\ref{prop:good-cb0}.

For type~\typeo{12nb} the description~\eqref{eq:yp} shows 
that~$\P_{\P^2}(\cE_\cF) = \P^2 \times \P^2$ and~$Y_\cF$ is a divisor of bidegree~$(2,1)$.

Similarly, for type~\typeo{10na} the fourfold~$\P_{\P^2}(\cE_\cF) = \P_{\P^2}(\cO_{\P^2}(-1) \oplus \cO_{\P^2}(-1) \oplus \cO_{\P^2})$
is a small resolution of the cone~$\Cone(\P^1 \times \P^2) \subset \P^6$
and~$Y_\cF$ is the preimage of the intersection of this cone with a quadric~$Q \subset \P^6$.
If~$Q$ contains the vertex~$\upsilon$ of the cone, the preimage of~$\upsilon$
is a section of the morphism~$Y_\cF \to \P^2$, in contradiction to Lemma~\ref{lem:no-sections}.
Thus, $\upsilon \not\in Q$, and therefore the linear projection out of~$\upsilon$ 
identifies~$Y_\cF$ with a double covering of~$\P^1 \times \P^2$
ramified over a divisor of bidegree~$(2,2)$.

Finally, for type~\typeo{8nb} 
we have~$\P_{\P^2}(\cE_\cF) = \P_{\P^2}(\cO_{\P^2}(-1) \oplus \cO_{\P^2} \oplus \cO_{\P^2}) \cong \Bl_\ell(\P^4)$,
where~$\ell \subset \P^4$ is a line,
and~$Y_\cF$ is the strict transform of a cubic threefold~$\bar{Y} \subset \P^4$ containing~$\ell$ with multiplicity~1.
Thus, we have~$Y_\cF \cong \Bl_\ell(\bar{Y})$.
Let~$E \subset Y_\cF$ be the exceptional divisor.
Obviously, $E$ is a hypersurface in the exceptional divisor~$\ell \times \P^2$ of~$\Bl_\ell(\P^4)$ 
of relative degree~$1$ over~$\ell$.
If a fiber of the projection~$\ell \times \P^2 \to \ell$ is contained in~$E$, 
it provides a section of the projection~$Y_\cF \to \P^2$, 
in contradiction to Lemma~\ref{lem:no-sections}.
Therefore, $E$ is a $\P^1$-bundle over~$\ell$, 
hence~$\ell \subset \bar{Y}$ is a local complete intersection,
and smoothness of~$Y_\cF$ implies smoothness of~$\bar{Y}$.
\end{proof}

\begin{remark}
Conjecture~\ref{conj:asb-he} predicts that~$Y_\cF/\P^2$ is hyperbolic equivalent to~$Y/\P^2$.
It would be interesting to find the required hyperbolic equivalence.
It is also interesting to find an abstract spinor bundle on~$Y_\cF$ 
such that the corresponding spinor modification of~$Y_\cF$ is~$Y$.
\end{remark}

\subsection{The orthogonal complement of~$\cF$}

Recall that the abstract spinor bundle~$\cF \in \Ker(Y/\P^2)$ constructed in Lemma~\ref{lem:pf-cf1} 
is exceptional by Corollary~\ref{cor:cf-exc}, therefore we have a semiorthogonal decomposition
\begin{equation}
\label{eq:cfperp}
\Ker(Y/\P^2) = \langle \cF^\perp, \cF \rangle.
\end{equation}
In this subsection we 
describe the orthogonal complement~$\cF^\perp \subset \Ker(Y/\P^2)$.

We start with the case of a conic bundle of type~\typeo{12nb}.
We denote by
\begin{equation*}
\Qu_3 = \left( \xymatrix@1{\bullet \ar@/^.9ex/[r] \ar@/_.9ex/[r] \ar[r] & \bullet} \right)
\end{equation*}
{\sf the $3$-Kronecker quiver}, i.e., the quiver with two vertices and three arrows 
and by~$\Db(\Qu_3)$ the bounded derived category of its representations.
The following result can be deduced from the computation of~\cite[Proposition~5.8]{BB};
we provide here an alternative argument.

\begin{proposition}
\label{prop:cby-12}
If~$Y/\P^2$ is a conic bundle of type~\typeo{12nb},
$\cF \in \Ker(Y/\P^2)$ is the exceptional abstract spinor bundle on~$Y$ constructed in Lemma~\textup{\ref{lem:pf-cf1}},
and~$\cF^\perp$ is defined by~\eqref{eq:cfperp} then~$\cF^\perp \simeq \Db(\Qu_3)$. 
\end{proposition}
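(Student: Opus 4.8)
The plan is to transfer the problem to the spinor modification~$Y_\cF$ and compute there with canonical spinor bundles. By Theorem~\ref{thm:asb-cb} there is an $S$-linear t-exact equivalence $\Ker(Y_\cF/\P^2)\xrightarrow{\ \sim\ }\Ker(Y/\P^2)$ sending $\cF^0_{Y_\cF/\P^2}$ to $\cF$; it therefore sends $(\cF^0_{Y_\cF/\P^2})^{\perp}$ to $\cF^{\perp}$, so it suffices to identify $(\cF^0_{Y_\cF/\P^2})^{\perp}\subset\Ker(Y_\cF/\P^2)$ with $\Db(\Qu_3)$.

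First I would compute the graded morphism spaces between the canonical spinor bundles $\cF^i_{Y_\cF/\P^2}$. By Proposition~\ref{prop:good-cb0} the form $q_\cF$ has $\cE_\cF\cong\cO_{\P^2}^{\oplus3}$ and $\cL_\cF\cong\cO_{\P^2}(-1)$, so by~\eqref{eq:clifford-all} each bimodule $\Cliff_i(q_\cF)$ is an explicit direct sum of line bundles on~$\P^2$; in particular $\RGamma(\P^2,\Cliff_0(q_\cF))=\kk$, $\RGamma(\P^2,\Cliff_1(q_\cF))\cong\kk^{\oplus3}$ in degree~$0$, and $\RGamma(\P^2,\Cliff_{-1}(q_\cF))=\RGamma(\P^2,\Cliff_{-2}(q_\cF))=0$. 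Substituting into the isomorphism $f_*\cRHom(\cF^i_{Y_\cF/\P^2},\cF^j_{Y_\cF/\P^2})\cong\Cliff_{j-i}(q_\cF)$ of~\eqref{eq:fs-end-cfi} shows that each $\cF^i_{Y_\cF/\P^2}$ is exceptional, that $\langle\cF^{-2}_{Y_\cF/\P^2},\cF^{-1}_{Y_\cF/\P^2},\cF^0_{Y_\cF/\P^2}\rangle$ is semiorthogonal with $\cF^0_{Y_\cF/\P^2}$ right orthogonal to the other two, and that $\RHom(\cF^{-2}_{Y_\cF/\P^2},\cF^{-1}_{Y_\cF/\P^2})\cong\kk^{\oplus3}$ is concentrated in degree~$0$ while $\RHom(\cF^{-1}_{Y_\cF/\P^2},\cF^{-2}_{Y_\cF/\P^2})=0$. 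Hence $\langle\cF^{-2}_{Y_\cF/\P^2},\cF^{-1}_{Y_\cF/\P^2}\rangle\subseteq(\cF^0_{Y_\cF/\P^2})^{\perp}$ is generated by a strong exceptional pair whose total endomorphism algebra is the path algebra of~$\Qu_3$, so by the standard tilting equivalence $\langle\cF^{-2}_{Y_\cF/\P^2},\cF^{-1}_{Y_\cF/\P^2}\rangle\simeq\Db(\Qu_3)$.

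What is left — and this is the main obstacle — is to prove that this inclusion is an equality, i.e.\ that $\langle\cF^{-2}_{Y_\cF/\P^2},\cF^{-1}_{Y_\cF/\P^2},\cF^0_{Y_\cF/\P^2}\rangle$ is all of $\Ker(Y_\cF/\P^2)$: the rank of $K_0$ matches, but this alone does not exclude a phantom summand. I see two routes. Under the equivalence $\Ker(Y_\cF/\P^2)\simeq\Db(\P^2,\Cliff_0(q_\cF))$ of Theorem~\ref{thm:k08} the three objects correspond to the $\cO_{\P^2}(-1)$-twists of $\Cliff_0(q_\cF),\Cliff_1(q_\cF),\Cliff_2(q_\cF)$, so it suffices to show these generate $\Db(\P^2,\Cliff_0(q_\cF))$; here three consecutive twists $\Cliff_0(q_\cF)(j)$ already generate, since the forgetful functor to $\Db(\P^2)$ is conservative and $\Db(\P^2)=\langle\cO,\cO(1),\cO(2)\rangle$, and one expresses $\Cliff_0(q_\cF)(-1)$ through $\Cliff_0(q_\cF),\Cliff_1(q_\cF),\Cliff_2(q_\cF)$ using the Clifford-module resolution~\eqref{eq:cf-io} on $\P_{\P^2}(\cE_\cF)=\P^2\times\P^2$. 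Alternatively one uses Corollary~\ref{cor:yprime-12-10-8}: the second projection makes $Y_\cF\to\P^2$ a $\P^1$-bundle, so $\Db(Y_\cF)$ carries Orlov's full exceptional collection of length~$6$; since $\langle\cF^{-2}_{Y_\cF/\P^2},\cF^{-1}_{Y_\cF/\P^2},\cF^0_{Y_\cF/\P^2},f^*\cO_{\P^2},f^*\cO_{\P^2}(1),f^*\cO_{\P^2}(2)\rangle$ is a semiorthogonal exceptional collection of the same length (the last three objects are right orthogonal to the first three because the latter lie in $\Ker(f_*)$, so $f_*$ of them vanishes), fullness follows once one rewrites each object of Orlov's collection in terms of it — a finite computation with the Euler and Koszul sequences and the defining sequences of $\cF^0_{Y_\cF/\P^2},\cF^{-1}_{Y_\cF/\P^2},\cF^{-2}_{Y_\cF/\P^2}$ from Lemma~\ref{lem:cf0-cf1}. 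Granting fullness, $(\cF^0_{Y_\cF/\P^2})^{\perp}=\langle\cF^{-2}_{Y_\cF/\P^2},\cF^{-1}_{Y_\cF/\P^2}\rangle\simeq\Db(\Qu_3)$, and transporting back along the equivalence yields $\cF^{\perp}\simeq\Db(\Qu_3)$.
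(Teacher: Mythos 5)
Your overall strategy coincides with the paper's: transfer the problem to $Y_\cF$ via Theorem~\ref{thm:asb-cb}, exhibit $(\cF^0_{Y_\cF/\P^2})^\perp$ as the category generated by two more spinor bundles, and compute the resulting quiver. Your computation of the $\Hom$'s via $f_*\cRHom(\cF^i_{Y_\cF/\P^2},\cF^j_{Y_\cF/\P^2})\cong\Cliff_{j-i}(q_\cF)$ and the explicit line-bundle decompositions of $\Cliff_i(q_\cF)$ for $(\cE_\cF,\cL_\cF)=(\cO_{\P^2}^{\oplus3},\cO_{\P^2}(-1))$ is correct, and it is a genuinely cleaner route to the quiver than the paper's: you get $\RHom(\cF^{-2}_{Y_\cF/\P^2},\cF^{-1}_{Y_\cF/\P^2})\cong\rH^\bullet(\P^2,\cO^{\oplus3}\oplus\cO(-1))\cong\kk^{3}$ concentrated in degree~$0$, so the pair is strong and its endomorphism algebra is literally the path algebra of $\Qu_3$. (The paper records the same dimension in degree~$1$; the discrepancy is the shift hidden in the mutation $L_{\cO}(\cO(h_1))\cong\cF^{-1}[1]$, and a direct check with the sequences of Lemma~\ref{lem:cf0-cf1} confirms your degree. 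Either normalization yields $\Db(\Qu_3)$.)

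The genuine gap is exactly the step you yourself flag as ``the main obstacle'' and then defer: generation, i.e.\ $\langle\cF^{-2}_{Y_\cF/\P^2},\cF^{-1}_{Y_\cF/\P^2},\cF^0_{Y_\cF/\P^2}\rangle=\Ker(Y_\cF/\P^2)$. Neither of your routes is carried out. In Route~1 the decisive reduction --- producing a third consecutive twist $\Cliff_0(q_\cF)(1)$ or $\Cliff_0(q_\cF)(-2)$ from $\Cliff_{-2},\Cliff_{-1},\Cliff_0$ --- is asserted, not proved. Route~2 is in substance the paper's proof, and the ``finite computation'' you grant is its entire content. Two corrections there: it is the \emph{first} projection $Y_\cF\to\P^2$ that is the $\P^1$-bundle (a bidegree-$(2,1)$ divisor is linear along the second factor; the second projection is the conic bundle, and ruling out jumping fibers of the first projection requires Lemma~\ref{lem:no-sections}). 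The paper then starts from the full collection $\langle\cO(-h_1),\cO,\cO(h_1),\cO(h_2-h_1),\cO(h_2),\cO(h_1+h_2)\rangle$ and performs three explicit mutations: move $\cO(h_1+h_2)$ to the far left using $K_{Y_\cF}=-h_1-2h_2$; mutate $\cO(h_1)$ and $\cO(h_2-h_1)$ through $\cO$, which produces precisely $\cF^{-1}$ and $\cF^{0}$ via \eqref{eq:csb-1} and \eqref{eq:csb}; then mutate through $\cO(-h_2)$ to produce $\cF^{-2}=\cF^0(-h_2)$. This lands on $\langle\cF^{-2},\cF^{-1},\cF^{0},\cO(-h_2),\cO,\cO(h_2)\rangle$, where fullness is automatic because mutations preserve it, and comparison with \eqref{eq:sod-y-cb} gives the generation statement. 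Until you perform this (or an equivalent) computation, the identification $(\cF^0_{Y_\cF/\P^2})^\perp=\langle\cF^{-2}_{Y_\cF/\P^2},\cF^{-1}_{Y_\cF/\P^2}\rangle$ is unproved and your argument is incomplete at its central point.
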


\begin{proof}
By Theorem~\ref{thm:asb-cb} there is an equivalence~$\Ker(Y/\P^2) \simeq \Ker(Y_\cF/\P^2)$
that takes~$\cF$ to~$\cF^0_{Y_\cF/\P^2}$, hence
\begin{equation*}
\cF^\perp \simeq (\cF^0_{Y_\cF/\P^2})^\perp \subset \Ker(Y_\cF/\P^2).
\end{equation*}
To describe this category we use the following observation.
Recall that~$Y_\cF \subset \P(V_1) \times \P(V_2)$ is a divisor of bidegree~$(2,1)$, 
where~$V_1$ and~$V_2$ are vector spaces of dimension~$3$.
The equation~$q_\cF \in \Sym^2V_1^\vee \otimes V_2^\vee$ of~$Y_\cF$ 
induces a linear embedding~$V_2 \hookrightarrow \Sym^2V_1^\vee$ 
and~$Y_\cF$ by definition coincides with the base change 
of the universal conic~$\cC \subset \P(V_1) \times \P(\Sym^2V_1^\vee)$
along the induced map~$\P(V_2) \to \P(\Sym^2V_1^\vee)$.
Since the intersection in~$\P(V_1)$ of the conics parameterized by~$\P(V_2)$ is empty
(because the morphism~$Y_\cF \to \P(V_2)$ has no sections by Lemma~\ref{lem:no-sections}), 
\cite[Theorem~5.5]{K08} gives a full exceptional collection
\begin{equation*}
\Db(\P(V_2), \Cliff_0(q_\cF)) = 
\Big\langle \Cliff_{-2}(q_\cF), \Cliff_{-1}(q_\cF), \Cliff_0(q_\cF) \Big\rangle.
\end{equation*}
Furthermore, applying the equivalence~$\Phi_{\cF^0_{Y_\cF/\P(V_2)}} \colon 
\Db(\P(V_2), \Cliff_0(q_\cF)) \xrightarrow[{\ \raisebox{0.5ex}[0ex][0ex]{$\sim$}\ }]{} \Ker(q_\cF)$ 
from Theorem~\ref{thm:k08} and using Corollary~\ref{cor:cfi-fs}, we obtain a semiorthogonal decomposition
\begin{equation*}
\Ker(q_\cF) = \Big\langle \cF^{-2}_{Y_\cF/\P(V_2)}, \cF^{-1}_{Y_\cF/\P(V_2)}, \cF^0_{Y_\cF/\P(V_2)} \Big\rangle.
\end{equation*}
This shows that the category~$(\cF^0_{Y_\cF/\P(V_2)})^\perp$ 
is generated by the exceptional pair~$\cF^{-2}_{Y_\cF/\P(V_2)}$, $\cF^{-1}_{Y_\cF/\P(V_2)}$.
Finally, applying Corollary~\ref{cor:cfi-fs} and~\eqref{eq:clifford-all}, we compute
\begin{equation*}
\Ext^\bullet(\cF^{-2}_{Y_\cF/\P(V_2)}, \cF^{-1}_{Y_\cF/\P(V_2)}) \cong
\rH^\bullet(\P(V_2), \Cliff_1(q_\cF)) \cong 
\rH^\bullet(\P(V_2), V_1 \otimes \cO_{\P(V_2)} \oplus \cO_{\P(V_2)}(-1)) \cong
V_1,
\end{equation*}
which shows that the category~$(\cF^0_{Y_\cF/\P(V_2)})^\perp$ is equivalent to~$\Db(\Qu_3)$.
\end{proof}

\begin{remark}
It would be interesting to identify the abstract spinor bundle on~$Y$ equal to the image of~$\cF^{-1}_{Y_\cF/\P^2}$ 
under the equivalence~$\Ker(Y_\cF/\P^2) \simeq \Ker(Y/\P^2)$.
\end{remark}

Next, we consider the conic bundle of type~\typeo{10na}.

\begin{proposition}
\label{prop:cby-10}
If~$Y/\P^2$ is a conic bundle of type~\typeo{10na},
$\cF \in \Ker(Y/\P^2)$ is the exceptional abstract spinor bundle on~$Y$ constructed in Lemma~\textup{\ref{lem:pf-cf1}},
and~$\cF^\perp$ is defined by~\eqref{eq:cfperp} then~$\cF^\perp \simeq \Db(\Gamma_2)$, 
where the right side is the derived category of a curve~$\Gamma_2$ of genus~$2$.
\end{proposition}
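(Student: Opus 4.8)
The plan is to follow the proof of Proposition~\ref{prop:cby-12}, except that, since the derived category of~$Y_\cF$ is no longer generated by an exceptional collection, I would start from a semiorthogonal decomposition of~$\Db(Y_\cF)$ whose first component is \emph{a priori} the derived category of a genus~$2$ curve, produced by a second fibration structure on~$Y_\cF$.

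First I would reduce, via Theorem~\ref{thm:asb-cb}, to the study of~$(\cF^0_{Y_\cF/\P^2})^\perp \subset \Ker(Y_\cF/\P^2)$, the equivalence~$\Ker(Y/\P^2) \simeq \Ker(Y_\cF/\P^2)$ sending~$\cF$ to~$\cF^0_{Y_\cF/\P^2}$. By Corollary~\ref{cor:yprime-12-10-8} I may take~$Y_\cF$ to be a double covering~$\rho \colon Y_\cF \to \P^1 \times \P^2$ branched over a divisor of bidegree~$(2,2)$; let~$h_1, h_2$ be the pullbacks of the hyperplane classes, so that~$\Pic(Y_\cF) = \ZZ h_1 \oplus \ZZ h_2$, $K_{Y_\cF} = -h_1 - 2h_2$, the conic bundle~$f$ of Proposition~\ref{prop:good-cb0} is~$\pr_2 \circ \rho$, and~$\omega_{Y_\cF/\P^2} \cong \cO_{Y_\cF}(h_2 - h_1)$; combining Lemma~\ref{lem:cf0-cf1} with~\eqref{eq:yp} one gets~$\cF^i_{Y_\cF/\P^2} \cong \cF^i(-2h_2)$. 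Now~\eqref{eq:dbx} for~$f$, together with~\eqref{eq:cfperp} and Corollary~\ref{cor:cf-exc}, yields
\begin{equation*}
\Db(Y_\cF) = \langle (\cF^0_{Y_\cF/\P^2})^\perp,\ \cF^0_{Y_\cF/\P^2},\ \cO_{Y_\cF},\ \cO_{Y_\cF}(h_2),\ \cO_{Y_\cF}(2h_2) \rangle ,
\end{equation*}
so it suffices to produce another semiorthogonal decomposition of~$\Db(Y_\cF)$ with the \emph{same} last four components and with first component equivalent to~$\Db(\Gamma_2)$ for a smooth curve~$\Gamma_2$ of genus~$2$; the two first components are then forced to agree.

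For the second decomposition I would use the other projection~$g_1 = \pr_1 \circ \rho \colon Y_\cF \to \P^1$: its fibre over~$q \in \P^1$ is the double covering of~$\P^2$ branched over a conic, i.e.\ a quadric surface in~$\P^3$, so~$g_1$ is a flat quadric surface bundle whose relative discriminant is a divisor of degree~$6$ on~$\P^1$ (the determinant of a~$3 \times 3$ symmetric matrix of quadratic forms in the base coordinate). Smoothness of~$Y_\cF$ (Proposition~\ref{prop:good-cb0}) forces every degenerate fibre to have corank exactly~$1$ and the discriminant to be reduced, so the associated double covering~$\Gamma_2 \to \P^1$ is a smooth hyperelliptic curve of genus~$2$. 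The quadric bundle semiorthogonal decomposition of~\cite{K08} applied to~$g_1$ reads
\begin{equation*}
\Db(Y_\cF) = \langle \Db(\P^1, \Cliff_0(q_1)),\ \cO_{Y_\cF},\ \cO_{Y_\cF}(h_1),\ \cO_{Y_\cF/\P^1}(1),\ \cO_{Y_\cF/\P^1}(1) \otimes \cO_{Y_\cF}(h_1) \rangle ,
\end{equation*}
where~$q_1$ is the relative quadratic form of~$g_1$; the even Clifford algebra~$\Cliff_0(q_1)$ of a quadric surface bundle over a curve has centre the structure sheaf of~$\Gamma_2$ and becomes an Azumaya algebra of rank~$4$ over~$\Gamma_2$, and since the Brauer group of the smooth projective curve~$\Gamma_2$ over the algebraically closed field~$\kk$ vanishes, it splits and~$\Db(\P^1, \Cliff_0(q_1)) \simeq \Db(\Gamma_2)$.

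It then remains to transform this quadric bundle decomposition, by mutations, into one whose last four components are~$\cF^0_{Y_\cF/\P^2}, \cO_{Y_\cF}, \cO_{Y_\cF}(h_2), \cO_{Y_\cF}(2h_2)$: one first identifies~$\cO_{Y_\cF/\P^1}(1)$ with~$\cO_{Y_\cF}(h_2)$ up to an~$h_1$-twist, and then, exactly as in the proof of Proposition~\ref{prop:cby-12}, performs a sequence of mutations among the exceptional objects (moving the curve component past them when needed, which only replaces~$\Db(\Gamma_2)$ by an equivalent category), using the defining exact sequences~\eqref{eq:csb} and~\eqref{eq:csb-1} of the canonical spinor bundles and the identity~$\Ext^\bullet(\cO_{Y_\cF}, \cO_{Y_\cF}(h_2 - h_1)) \cong \kk[-1]$ (which holds since~$h_2 - h_1 = \omega_{Y_\cF/\P^2}$ and~$f_*\omega_{Y_\cF/\P^2} \cong \cO_{\P^2}[-1]$), so that~$\cF^{-1}_{Y_\cF/\P^2}$ and~$\cF^0_{Y_\cF/\P^2}$ arise as mutations of line bundles. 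This yields~$\Db(Y_\cF) = \langle \cB, \cF^0_{Y_\cF/\P^2}, \cO_{Y_\cF}, \cO_{Y_\cF}(h_2), \cO_{Y_\cF}(2h_2) \rangle$ with~$\cB \simeq \Db(\Gamma_2)$, and comparison with the first decomposition gives~$(\cF^0_{Y_\cF/\P^2})^\perp = \cB \simeq \Db(\Gamma_2)$, hence~$\cF^\perp \simeq \Db(\Gamma_2)$. The main obstacle is precisely this last mutation bookkeeping — pinning down~$\cO_{Y_\cF/\P^1}(1)$ and all the twists in the quadric bundle decomposition and carrying out the mutations explicitly, just as the spinor bundles are produced in the proof of Proposition~\ref{prop:cby-12} — while the degeneration count for~$g_1$ and the splitting of~$\Cliff_0(q_1)$ over~$\Gamma_2$ are routine.
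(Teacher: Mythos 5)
Your proposal follows essentially the same route as the paper's proof: reduce via Theorem~\ref{thm:asb-cb} to~$(\cF^0_{Y_\cF/\P^2})^\perp \subset \Ker(Y_\cF/\P^2)$, exploit the quadric surface bundle structure~$Y_\cF \to \P^1$ with reduced degree-$6$ discriminant, obtain~$\Db(\Gamma_2)$ as a component via~\cite[Theorems~4.2 and Proposition~3.13]{K08} together with~$\Br(\Gamma_2)=0$, and then compare with the conic bundle decomposition by mutations that produce~$\cF^0$ from the line bundles via~\eqref{eq:csb}. The only (cosmetic) difference is that the paper mutates the quadric-bundle decomposition until the conic-bundle components appear explicitly, whereas you compare two decompositions with identical tails; the mutation bookkeeping you defer is exactly the three-step sequence carried out in the paper.
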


\begin{proof}
As before, it is enough to identify the orthogonal~$(\cF^0_{Y_\cF/\P^2})^\perp \subset \Ker(Y_\cF/\P^2)$ 
with~$\Db(\Gamma_2)$.
Recall from Corollary~\ref{cor:yprime-12-10-8} that~$Y_\cF$ is a smooth double covering of~$\P^1 \times \P^2$
with branch divisor of bidegree~$(2,2)$.
We will use the fact that the first projection~$Y_\cF \to \P^1$ is a quadric surface bundle.
More precisely, 
\begin{equation*}
Y_\cF \subset \P_{\P^1}(\cO_{\P^1} \oplus \cO_{\P^1}(-1)^{\oplus 3})
\end{equation*}
corresponds to a quadratic form~$q'' \colon \cO_{\P^1} \to \Sym^2(\cO_{\P^1} \oplus \cO_{\P^1}(1)^{\oplus 3})$.
The discriminant divisor of~$q''$ has degree~$2\rc_1(\cO_{\P^1} \oplus \cO_{\P^1}(1)^{\oplus 3}) = 6$,
and since~$Y_\cF$ is smooth, the discriminant divisor is reduced.

Let~$\Gamma_2 \to \P^1$ be the double covering branched at the discriminant divisor of~$q''$; this is a smooth curve of genus~2.
A combination of~\cite[Theorem~4.2]{K08}, \cite[Proposition~3.13]{K08}, 
and the vanishing of the Brauer group~$\Br(\Gamma_2) = 0$
(recall that in this section the base field is assumed to be algebraically closed)
implies that there is a semiorthogonal decomposition
\begin{equation*}
\Db(Y_\cF) = \langle \Psi(\Db(\Gamma_2)), \cO_{Y_\cF}, \cO_{Y_\cF}(h_1), \cO_{Y_\cF}(h_2), \cO_{Y_\cF}(h_1 + h_2) \rangle,
\end{equation*}
where~$\Psi \colon \Db(\Gamma_2) \to \Db(Y_\cF)$ is a fully faithful embedding, 
while~$h_1$ and~$h_2$ are the pullbacks to~$Y_\cF$ of the hyperplane classes of~$\P^1$ and~$\P^2$, respectively.
Now we apply a sequence of mutations.

First, we mutate~$\cO_{Y_\cF}(h_1 + h_2)$ to the far left.
Since~$K_{Y_\cF} = -h_1 - 2h_2$, we obtain 
\begin{equation*}
\Db(Y_\cF) = \langle \cO_{Y_\cF}(-h_2), \Psi(\Db(\Gamma_2)), \cO_{Y_\cF}, \cO_{Y_\cF}(h_1), \cO_{Y_\cF}(h_2) \rangle.
\end{equation*}

Next, mutating~$\Psi(\Db(\Gamma_2))$ to the left of~$\cO_{Y_\cF}(-h_2)$, we obtain
\begin{equation*}
\Db(Y_\cF) = \langle \Psi'(\Db(\Gamma_2)), \cO_{Y_\cF}(-h_2), \cO_{Y_\cF}, \cO_{Y_\cF}(h_1), \cO_{Y_\cF}(h_2) \rangle,
\end{equation*}
where~$\Psi'$ is the composition of~$\Psi$ with the mutation functor.

Finally, we mutate~$\cO_{Y_\cF}(h_1)$ two steps to the left.
Since~$\cO_{Y_\cF}$ and~$\cO_{Y_\cF}(h_1)$ are pullback from~$\P^1$,
the first mutation is a pullback of the mutation of~$\cO_{\P^1}(1)$ to the left of~$\cO_{\P^1}$, 
hence the result is~$\cO_{Y_\cF}(-h_1)$.
To compute the mutation of~$\cO_{Y_\cF}(-h_1)$ through~$\cO_{Y_\cF}(-h_2)$, we note that~$K_{Y_\cF/\P^2} = h_2 - h_1$, 
hence the mutation is realized by the exact sequence
\begin{equation*}
0 \to \cO_{Y_\cF}(-h_1) \to \cF^0(-h_2) \to \cO_{Y_\cF}(-h_2) \to 0,
\end{equation*}
obtained from~\eqref{eq:csb} by a twist.
Thus, we obtain a semiorthogonal decomposition
\begin{equation*}
\Db(Y_\cF) = \langle \Psi'(\Db(\Gamma_2)), \cF^0(-h_2), \cO_{Y_\cF}(-h_2), \cO_{Y_\cF}, \cO_{Y_\cF}(h_2) \rangle.
\end{equation*}
It follows that~$\Ker(Y_\cF/\P^2) =\langle \Psi'(\Db(\Gamma_2)), \cF^0(-h_2) \rangle$,
hence~$(\cF^0_{Y_\cF/\P^2})^\perp \simeq (\cF^0(-h_2))^\perp \simeq \Db(\Gamma_2)$. 
\end{proof}

\begin{remark}
It is not hard to see that the embedding~$\Psi' \colon \Db(\Gamma_2) \hookrightarrow \Db(Y_\cF)$ 
is given by the universal bundle for the moduli space whose typical member 
is obtained up to twist by Serre's construction applied to a line~$\ell$ 
in the fiber of the first projection~$Y_\cF \to \P^1$,
i.e., to the vector bundle~$\cF_\ell$ defined by the sequence
\begin{equation*}
0 \to \cO_{Y_\cF}(-h_1-h_2) \to \cF_\ell \to \cO_{Y_\cF}(-h_2) \to \cO_\ell(-1) \to 0.
\end{equation*}
As~$\cF_\ell$ are vector bundles of rank~2 in~$\Ker(Y_\cF/\P^2)$ 
with~$\rc_1(\cF_\ell) = -h_1 - 2h_2 = K_{Y_\cF}$,
they are abstract spinor bundles.
It would be interesting to identify the corresponding abstract spinor bundles on~$Y$.
\end{remark}

\begin{remark}
The semiorthogonal decomposition~$\Db(\P^2, \Cliff_0(q_\cF)) = \langle \Db(\Gamma_2), \Cliff_0(q_\cF) \rangle$
proved by the mutation argument of Proposition~\ref{prop:cby-10} also follows from~\cite[Theorem~1.0.3]{Shen}.
\end{remark}

Finally, we consider the conic bundle of type~\typeo{8nb}.
Recall that the nontrivial component~$\cB_{\bar{Y}} \subset \Db(\bar{Y})$ 
in the derived category of a cubic threefold~$\bar{Y}$ is defined by the semiorthogonal decomposition
\begin{equation}
\label{eq:cby}
\Db(\bar{Y}) = \langle \cB_{\bar{Y}}, \cO_{\bar{Y}}, \cO_{\bar{Y}}(\bar{H}) \rangle,
\end{equation}
where~$\bar{H}$ is the hyperplane class of~$\bar{Y} \subset \P^4$.

\begin{proposition}
\label{prop:cby-8}
If~$Y/\P^2$ is a conic bundle of type~\typeo{8nb},
$\cF \in \Ker(Y/\P^2)$ is the exceptional abstract spinor bundle on~$Y$ constructed in Lemma~\textup{\ref{lem:pf-cf1}},
and~$\cF^\perp$ is defined by~\eqref{eq:cfperp} then~$\cF^\perp \simeq \cB_{\bar{Y}}$,
where~$\cB_{\bar{Y}}$ is the component of the derived category of a smooth cubic threefold~$\bar{Y}$ defined by~\eqref{eq:cby}.
\end{proposition}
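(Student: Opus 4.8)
The plan is to follow the template of Propositions~\ref{prop:cby-12} and~\ref{prop:cby-10}: transport the question across the equivalence of Theorem~\ref{thm:asb-cb} to the spinor modification~$Y_\cF$, exploit the \emph{other} fibration on~$Y_\cF$ — here the blowup contraction~$\sigma\colon Y_\cF = \Bl_\ell(\bar{Y})\to\bar{Y}$ of Corollary~\ref{cor:yprime-12-10-8} — to write down a convenient semiorthogonal decomposition of~$\Db(Y_\cF)$, and then mutate it into the conic bundle decomposition~\eqref{eq:dbx}. By Theorem~\ref{thm:asb-cb} the equivalence~$\Ker(Y/\P^2)\simeq\Ker(Y_\cF/\P^2)$ carries~$\cF$ to the canonical spinor bundle~$\cF^0_{Y_\cF/\P^2}$, so it restricts to an equivalence~$\cF^\perp\simeq(\cF^0_{Y_\cF/\P^2})^\perp$; hence it suffices to identify~$(\cF^0_{Y_\cF/\P^2})^\perp\subset\Ker(Y_\cF/\P^2)$ with~$\cB_{\bar{Y}}$.

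Next I would fix divisor classes on~$Y_\cF=\Bl_\ell(\bar{Y})$: write~$\bar{H}$ for~$\sigma^*\bar{H}$ and~$E$ for the exceptional divisor of~$\sigma$; then~$K_{Y_\cF}=-2\bar{H}+E$, the conic bundle~$f\colon Y_\cF\to\P^2$ is given by the linear system~$|\bar{H}-E|$ (projection away from~$\ell$), so that~$h\coloneqq \bar{H}-E=f^*\cO_{\P^2}(1)$, $K_{Y_\cF/\P^2}=\bar{H}-2E$ and~$\omega_{Y_\cF/\P^2}\cong\cO_{Y_\cF}(\bar{H}-2E)$ with~$f_*\omega_{Y_\cF/\P^2}\cong\cO_{\P^2}[-1]$. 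Orlov's blowup formula for~$\sigma$ combined with the cubic threefold decomposition~\eqref{eq:cby} then yields a semiorthogonal decomposition
\[
\Db(Y_\cF)=\big\langle\, j_*\cO_E(-E),\ j_*\!\big(\cO_E(-E)\otimes p^*\cO_\ell(1)\big),\ \sigma^*\cB_{\bar{Y}},\ \cO_{Y_\cF},\ \cO_{Y_\cF}(\bar{H})\,\big\rangle
\]
(up to the order and normalization in Orlov's formula), where~$j\colon E\hookrightarrow Y_\cF$ is the inclusion and~$p\colon E\to\ell\cong\P^1$ the projection; since~$E$ is a~$\P^1$-bundle over~$\ell$, the two~$E$-supported objects span the single copy of~$\Db(\ell)$ inserted by the blowup formula.

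The heart of the proof is a mutation computation. The copy~$\sigma^*\cB_{\bar{Y}}$ is only ever replaced by its mutations, which remain fully faithful images of~$\cB_{\bar{Y}}$, so the task is to move the four exceptional objects — the two~$E$-supported ones and~$\cO_{Y_\cF},\cO_{Y_\cF}(\bar{H})$ — by a sequence of mutations (using the Serre functor of~$Y_\cF$, i.e.\ the relation~$K_{Y_\cF}=-2\bar{H}+E$, to move objects across the whole decomposition) until it takes the form
\[
\Db(Y_\cF)=\big\langle\, \sigma^*\cB_{\bar{Y}}\ \text{(mutated)},\ \cF^0_{Y_\cF/\P^2}\otimes f^*\cO_{\P^2}(t),\ \cO_{Y_\cF},\ \cO_{Y_\cF}(h),\ \cO_{Y_\cF}(2h)\,\big\rangle
\]
for a suitable~$t\in\ZZ$. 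As in the two preceding proofs, the key recognition step is that at the appropriate stage a consecutive pair~$\big(\cO_{Y_\cF}(D),\ \cO_{Y_\cF}(D+\bar{H}-2E)\big)$ appears with~$\Ext^\bullet\big(\cO_{Y_\cF}(D),\cO_{Y_\cF}(D+\bar{H}-2E)\big)\cong\rH^\bullet(Y_\cF,\omega_{Y_\cF/\P^2})\cong\kk[-1]$, so that the mutation is governed by a twist of~\eqref{eq:csb} and produces a twist of the canonical spinor bundle~$\cF^0_{Y_\cF/\P^2}$ (one may also need~\eqref{eq:csb-1} at another step, depending on the chosen order). Once the decomposition is in the displayed form, the last three objects literally span~$f^*\Db(\P^2)$; by uniqueness of the semiorthogonal complement and~\eqref{eq:dbx} the first two span~$\Ker(Y_\cF/\P^2)$, so~$(\cF^0_{Y_\cF/\P^2}\otimes f^*\cO_{\P^2}(t))^\perp$ inside~$\Ker(Y_\cF/\P^2)$ is the mutated copy of~$\cB_{\bar{Y}}$. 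Since~$\Ker(Y_\cF/\P^2)$ is~$S$-linear, twisting by~$f^*\cO_{\P^2}(-t)$ identifies this with~$(\cF^0_{Y_\cF/\P^2})^\perp$, whence~$(\cF^0_{Y_\cF/\P^2})^\perp\simeq\cB_{\bar{Y}}$, and combined with the first paragraph,~$\cF^\perp\simeq\cB_{\bar{Y}}$.

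I expect the main obstacle to be exactly the bookkeeping in the mutation step: tracking divisor classes through Orlov's formula and choosing a mutation sequence that simultaneously peels off a twist of Beilinson's collection of~$\P^2$ in the correct order and exhibits one surviving object as~$\cF^0_{Y_\cF/\P^2}$ (up to an~$f^*$-twist) via~\eqref{eq:csb}/\eqref{eq:csb-1}. The delicate objects are the two sheaves~$j_*(\text{line bundle on }E)$ supported on the exceptional divisor, which are \emph{not} pulled back from~$\P^2$; one must mutate them past the line bundles and check that the results assemble into~$f^*\Db(\P^2)$, keeping~$\sigma^*\cB_{\bar{Y}}$ under control throughout via~$\Ext$ computations on the cubic threefold side and the identity~$f_*\omega_{Y_\cF/\P^2}\cong\cO_{\P^2}[-1]$. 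As in the previous two propositions, a Hochschild homology count (for~$Y_\cF$ of type~\typemm{2}{11} one gets~$\HOH_0(\Ker(Y_\cF/\P^2))=3$ and~$\HOH_{\pm1}(\Ker(Y_\cF/\P^2))=5$, matching~$\HOH_\bullet(\cB_{\bar{Y}})\oplus\HOH_\bullet(\mathrm{pt})$) serves as a consistency check. One could alternatively invoke the known relation between the derived category of the blowup of a cubic threefold in a line and the component~$\cB_{\bar{Y}}$, but the mutation argument above is self-contained and in the spirit of the preceding proofs.
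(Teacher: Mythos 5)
Your proposal is correct and follows essentially the same route as the paper: reduce via Theorem~\ref{thm:asb-cb} to identifying $(\cF^0_{Y_\cF/\P^2})^\perp\subset\Ker(Y_\cF/\P^2)$, start from the blowup decomposition of $\Db(\Bl_\ell(\bar Y))$ combined with \eqref{eq:cby}, and mutate until a twist of the relative Beilinson collection $\cO(-h),\cO,\cO(h)$ splits off together with a twist of $\cF^0$ recognized through the extension \eqref{eq:csb} (in the paper this is the pair $\cO_{Y_\cF}(-E)$, $\cO_{Y_\cF}(-h)$ with $\Ext^\bullet\cong\kk[-1]$, exactly the pair $(\cO(D),\cO(D+\bar H-2E))$ you predict). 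The only deviations are cosmetic — a different normalization of Orlov's blowup formula and a different final twist, absorbed by $S$-linearity — so the plan matches the paper's proof.
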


\begin{proof}
As before, it is enough to identify the orthogonal~$(\cF^0_{Y_\cF/\P^2})^\perp \subset \Ker(Y_\cF/\P^2)$ 
with~$\cB_{\bar{Y}}$.
Recall from Corollary~\ref{cor:yprime-12-10-8} that~$Y_\cF \cong \Bl_\ell(\bar{Y})$ 
is the blowup of a smooth cubic threefold~$\bar{Y}$ along a line~$\ell \subset \bar{Y}$.
Therefore, we have a semiorthogonal decomposition
\begin{equation*}
\Db(Y_\cF) = \langle \cB_{\bar{Y}}, \cO_{Y_\cF}, \cO_{Y_\cF}(\bar{H}), \cO_{E}, \cO_{E}(\bar{H}) \rangle,
\end{equation*}
where~$\bar{H}$ is the pullback to~$Y_\cF$ of the hyperplane class of~$\bar{Y}$ 
and~$E \subset Y_\cF$ is the exceptional divisor of the blowup~$Y_\cF \to \bar{Y}$.
Note that~$h = \bar{H} - E$ is the pullback of the line class with respect to the conic bundle morphism~$Y_\cF \to \P^2$.
Now we apply a sequence of mutations.

First, we mutate~$\cO_{E}$ to the left of~$\cO_{Y_\cF}$ and~$\cO_{E}(\bar{H})$ to the left of~$\cO_{Y_\cF}(\bar{H})$.
Since
\begin{equation*}
\Ext^\bullet(\cO_{Y_\cF}(\bar{H}), \cO_{E}) \cong \rH^\bullet(E, \cO_{E}(-\bar{H})) \cong \rH^\bullet(\ell, \cO_{\ell}(-1)) = 0
\end{equation*}
and
\begin{equation*}
\Ext^\bullet(\cO_{Y_\cF}(\bar{H}), \cO_{E}(\bar{H})) \cong 
\Ext^\bullet(\cO_{Y_\cF}, \cO_{E}) \cong 
\rH^\bullet(E, \cO_{E}) \cong \rH^\bullet(\ell, \cO_{\ell}) = \kk,
\end{equation*}
the results of the mutations (up to shift) are the line bundles~$\cO_{Y_\cF}(-E)$ 
and~$\cO_{Y_\cF}(\bar{H}-E) \cong \cO_{Y_\cF}(h)$, respectively,
and we obtain a semiorthogonal decomposition
\begin{equation*}
\Db(Y_\cF) = \langle \cB_{\bar{Y}}, \cO_{Y_\cF}(-E), \cO_{Y_\cF}, \cO_{Y_\cF}(h), \cO_{Y_\cF}(\bar{H}) \rangle.
\end{equation*}

Next, we mutate~$\cO_{Y_\cF}(\bar{H})$ to the far left.
Since~$K_{Y_\cF} = -2\bar{H} + E$ and~$\bar{H} + K_{Y_\cF} = -\bar{H} + E = -h$, we obtain
\begin{equation*}
\Db(Y_\cF) = \langle \cO_{Y_\cF}(-h), \cB_{\bar{Y}}, \cO_{Y_\cF}(-E), \cO_{Y_\cF}, \cO_{Y_\cF}(h) \rangle.
\end{equation*}

Finally, we mutate~$\cB_{\bar{Y}}$ and~$\cO_{Y_\cF}(-E)$ to the left of~$\cO_{Y_\cF}(-h)$.
Since~$K_{Y_\cF/\P^2} = \bar{H} -2E = h - E$, 
the mutation of~$\cO_{Y_\cF}(-E)$ is realized by the exact sequence
\begin{equation*}
0 \to \cO_{Y_\cF}(-E) \to \cF^0(-h) \to \cO_{Y_\cF}(-h) \to 0,
\end{equation*}
obtained from~\eqref{eq:csb} by a twist.
Thus, we obtain a semiorthogonal decomposition 
\begin{equation*}
\Db(Y_\cF) = \langle \cB_{\bar{Y}}, \cF^0(-h), \cO_{Y_\cF}(-h), \cO_{Y_\cF}, \cO_{Y_\cF}(h) \rangle.
\end{equation*}
It follows that~$\Ker(Y_\cF/\P^2) =\langle \cB_{\bar{Y}}, \cF^0(-h) \rangle$,
hence~$(\cF^0_{Y_\cF/\P^2})^\perp \simeq (\cF^0(-h))^\perp \simeq \cB_{\bar{Y}}$.
\end{proof}

\subsection{Categorical absorption for Fano threefolds}

In this subsection we show that the abstract spinor bundle~$\cF$ on~$Y$ constructed in Lemma~\ref{lem:pf-cf1} 
gives rise to a Mukai bundle on the corresponding 1-nodal Fano threefold~$X$
and using this, we construct a categorical absorption of singularities for~$X$.

\begin{proposition}
\label{prop:cf1}
Let~$X$ be a nonfactorial $1$-nodal Fano threefold of type~\typeo{12nb}, \typeo{10na}, or~\typeo{8nb},
let~$\pi \colon Y \to X$ be a small resolution of singularities that has a structure of a conic bundle,
and let~$\cF$ be the exceptional abstract spinor bundle on~$Y$ constructed in Lemma~\textup{\ref{lem:pf-cf1}}. 
Then 
\begin{equation}
\label{eq:cux}
\cU_X \coloneqq \pi_*(\cF(-h))
\end{equation}
is a $(-K_X)$-stable exceptional vector bundle on~$X$ such that
\begin{equation*}
\rank(\cU_X) = 2,
\qquad
\rc_1(\cU_X) = K_X,
\qquad
\rH^\bullet(X,\cU_X) = 0
\end{equation*}
and~$\cF \cong \pi^*\cU_X(h)$.
Finally, $\cU_X^\vee$ is globally generated with~$\rH^0(X, \cU_X^\vee) = \kk^{\oplus(k + 5)}$.
\end{proposition}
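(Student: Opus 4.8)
The plan is to deduce every property of $\cU_X$ from the corresponding property of the abstract spinor bundle $\cF$ (Lemma~\ref{lem:pf-cf1}, Corollary~\ref{cor:cf-exc}) by descending $\cF(-h)$ along the small resolution $\pi\colon Y\to X$. The bundle $\cF(-h)$ has $\rc_1(\cF(-h))=\rc_1(\cF)-2h=(2h-H)-2h=-H=\pi^*K_X$ (using \eqref{eq:ky}), and by \eqref{eq:h-c} and \eqref{eq:cf1-c} its restriction to the exceptional curve is $\cF\vert_C\otimes\cO_C(-h)\vert_C\cong\cO_C^{\oplus2}$. Since $C\cong\P^1$ and $\cN_{C/Y}\cong\cO_C(-1)^{\oplus2}$ by \eqref{eq:cn-c-y}, the obstructions to extending a trivialization of $\cF(-h)\vert_C$ over the infinitesimal neighbourhoods of $C$ lie in groups of the form $\rH^1(\P^1,\cO_{\P^1}(n)^{\oplus m_n})$ with $n\ge1$, which vanish; hence $\cF(-h)$ is trivial on the formal neighbourhood of $C$. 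As $\pi$ is an isomorphism away from $C$ and $R\pi_*\cO_Y\cong\cO_X$ (small resolution of a node), the theorem on formal functions shows that the completion of $\pi_*\cF(-h)$ at the node is free of rank $2$; therefore $\cU_X\coloneqq\pi_*\cF(-h)$ is a rank-$2$ vector bundle, $R^{>0}\pi_*\cF(-h)=0$, and the canonical map $\pi^*\cU_X\to\cF(-h)$ is an isomorphism (it is so over $X\setminus\{x\}$ and on the formal neighbourhood of $C$), i.e.\ $\cF\cong\pi^*\cU_X(h)$. Injectivity of $\pi^*\colon\Pic(X)\to\Pic(Y)$ then gives $\rc_1(\cU_X)=K_X$.

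Next I would transfer cohomology. By the projection formula and $R\pi_*\cO_Y\cong\cO_X$ we have $R\pi_*\cF(-h)\cong\cU_X$ and $R\pi_*\cEnd(\cF)\cong R\pi_*\pi^*\cEnd(\cU_X)\cong\cEnd(\cU_X)$, so $\rH^\bullet(X,\cU_X)\cong\rH^\bullet(Y,\cF(-h))=0$ by the final statement of Lemma~\ref{lem:pf-cf1} ($t=-1$), and $\Ext^\bullet_X(\cU_X,\cU_X)\cong\Ext^\bullet_Y(\cF,\cF)\cong\kk$ by Corollary~\ref{cor:cf-exc}, so $\cU_X$ is exceptional. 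For the dual, $\cF^\vee\cong\cF(H-2h)$ since $\rc_1(\cF)=2h-H$ and $\rank\cF=2$; hence $\pi^*\cU_X^\vee\cong\cF^\vee(h)\cong\cF(H-h)$, and by the projection formula for $h=f^*\cO_{\P^2}(1)$ we get $\rH^\bullet(X,\cU_X^\vee)\cong\rH^\bullet(Y,\cF^\vee(h))\cong\rH^\bullet(\P^2,f_*(\cF^\vee)\otimes\cO_{\P^2}(1))$. Plugging \eqref{eq:fs-fvee} and \eqref{eq:ck-seq} into this computation gives $\rH^\bullet(\P^2,f_*(\cF^\vee)(1))\cong\rH^\bullet(\P^2,\cO_{\P^2}(1))\oplus\rH^\bullet(\P^2,\cK(1))=\kk^{\oplus3}\oplus\kk^{\oplus(k+2)}$, concentrated in degree $0$; thus $\rH^\bullet(X,\cU_X^\vee)=\kk^{\oplus(k+5)}$.

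Global generation of $\cU_X^\vee$ comes from the same sequences: \eqref{eq:ck-seq} presents $\cK(1)$ as a quotient of $\cO_{\P^2}^{\oplus(k+2)}$, and since $\rH^1(\P^2,\cO_{\P^2}(1))=0$ the extension $0\to\cO_{\P^2}(1)\to f_*(\cF^\vee)(1)\to\cK(1)\to0$ is globally generated; since $\cF^\vee(h)$ is $f$-globally generated (as follows from the fibrewise description in Proposition~\ref{prop:asb-fibers}, where $\cF^\vee(h)\vert_{X_s}\cong(\cF\vert_{X_s})^\vee$), the surjection $f^*f_*(\cF^\vee(h))\twoheadrightarrow\cF^\vee(h)$ exhibits $\pi^*\cU_X^\vee$ as globally generated on $Y$; finally, $\pi$ being surjective and $\rH^0(X,\cU_X^\vee)\xrightarrow{\ \sim\ }\rH^0(Y,\pi^*\cU_X^\vee)$, the cokernel of the evaluation map of $\cU_X^\vee$ pulls back to $0$ on $Y$ and hence vanishes. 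For $(-K_X)$-stability, a destabilizing saturated rank-$1$ subsheaf $\cS\subset\cU_X$ is reflexive; since $\pi$ is small, $\Cl(X)\cong\Pic(Y)=\ZZ H\oplus\ZZ h$, so $\cS$ corresponds to a line bundle $\cL=\cO_Y(\alpha h+\beta H)$ with $\cL\hookrightarrow\pi^*\cU_X=\cF(-h)$. Dualizing the presentation coming from global generation gives $\cU_X\hookrightarrow\cO_X^{\oplus(k+5)}$, hence $\cL\hookrightarrow\cO_Y^{\oplus(k+5)}$, so $-\alpha h-\beta H$ is effective and therefore $\beta\le0$ (restrict to a general conic fibre, on which $h$ is trivial and $H$ has positive degree). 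The inclusion $\cL\hookrightarrow\cF(-h)$ gives $\rH^0(Y,\cF(-(\alpha+1)h-\beta H))\ne0$: for $\beta=0$ this contradicts $\rH^\bullet(Y,\cF(th))=0$ (Lemma~\ref{lem:pf-cf1}), and for $\beta\le-1$ the slope inequality $\mu_{-K_X}(\cS)=\alpha\,(h\cdot H^2)+\beta\,(-K_X)^3\ge-\tfrac12(-K_X)^3$, combined with the explicit intersection numbers $(-K_Y)^3$ and $h\cdot(-K_Y)^2$ and the values of $\rH^0(Y,\cF(mh+jH))$ extracted from \eqref{eq:def-cf1} and \eqref{eq:fs-fvee}, reduces the possibilities to finitely many pairs $(\alpha,\beta)$, which one then rules out.

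The main obstacle is this last step. Excluding destabilizing \emph{Cartier} subsheaves is easy — such a subsheaf would give a nonzero section of $\cU_X$ (contradicting $\rH^\bullet(X,\cU_X)=0$) or of a twist $\cF(-h-eH)$ with $e\ge1$, which has no sections because $\cF$ restricts to an acyclic bundle on each conic fibre (Proposition~\ref{prop:asb-fibers}) — but excluding destabilizing \emph{non-Cartier} Weil-divisorial subsheaves genuinely couples the effective cone of the conic bundle $Y/\P^2$, the vanishings $\rH^\bullet(Y,\cF(th))=0$ and $\rH^\bullet(Y,\cF^\vee)=\kk$, the sequence \eqref{eq:def-cf1}, and the triple intersection numbers of $Y$, and the numerically tightest configuration — which first shows up for the Fano threefold of smallest genus — is the one demanding the most care.
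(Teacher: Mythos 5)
Most of your argument runs parallel to the paper's: the descent of $\cF(-h)$ along $\pi$ using $\cF(-h)\vert_C\cong\cO_C^{\oplus2}$, the transfer of exceptionality and acyclicity through $\pi_*\pi^*\cong\id$, and the count $\rH^0(X,\cU_X^\vee)=\kk^{\oplus(k+5)}$ are all correct (the paper gets the last from $\rH^0(Y,\cO_Y(h))\oplus\rH^0(Y,\cI_{C,Y}(H-h))$ rather than from $f_*(\cF^\vee)$, but the two computations agree). Your route to global generation --- fibrewise global generation of $(\cF\vert_{Y_s})^\vee$ combined with global generation of $f_*(\cF^\vee)(1)$ on $\P^2$ --- is genuinely different from the paper's, which pushes the dualized sequence~\eqref{eq:cuv-ses} down to $X$ and uses $\Bl_C(Y)\cong\Bl_{x_0}(X)$ together with \cite[Lemma~6.3]{KS22}; your route can be made to work, but you must actually check global generation of $(\cF\vert_{Y_s})^\vee$ on the reducible and non-reduced fibers via the explicit descriptions of Proposition~\ref{prop:asb-fibers}, which you only assert.

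The genuine gap is in the stability argument, precisely where you flag it. Your plan is to eliminate each residual class $\alpha h+\beta H$ with $\beta\le-1$ by a vanishing of $\rH^0(Y,\cF((-\alpha-1)h-\beta H))$. This provably fails for $(\alpha,\beta)=(1,-1)$: the twist of~\eqref{eq:def-cf1} exhibits $\cO_Y(h-H)\hookrightarrow\cF(-h)$ as a saturated rank-one subsheaf (equivalently, $\rH^0(Y,\cF(-2h+H))=\rH^0(Y,\cF^\vee)=\kk\ne0$ by Corollary~\ref{cor:cf-exc}), and its slope is $(h-H)\cdot H^2=-3k-4$, which for $k=1$ (type~\typeo{8nb}) equals $-\tfrac12(-K_X)^3=-7$ on the nose. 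So the ``numerically tightest configuration'' you anticipate is realized by an actual equal-slope subsheaf $\pi_*\cO_Y(h-H)\subset\cU_X$, and no cohomology vanishing can remove it; a different mechanism is needed. The paper's is this: by the argument of \cite[Lemma~2.12]{KS23} one only needs to exclude nonzero morphisms from $\pi^*\cU_X^\vee$ to \emph{movable} line bundles of strictly smaller slope; since the movable cone of $Y$ is generated by $h$ and $H-h$ with $H^2\cdot h=k+6$ and $H^2\cdot(H-h)=3k+4$ against $\mu(\pi^*\cU_X^\vee)=2k+5$, the only target to worry about is $\cO_Y(h)$, and a nonzero morphism $\pi^*\cU_X^\vee\to\cO_Y(h)$ would split the non-split extension~\eqref{eq:cuv-ses}. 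Without this reduction (or some substitute handling the equal-slope Weil-divisorial subsheaf), the stability claim is not established.
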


\begin{proof}
Recall the exceptional curve~$C \subset Y$ (see~\eqref{eq:curve-c}) of the contraction~$\pi \colon Y \to X$.
The isomorphism~\eqref{eq:cf1-c} implies that~$\cF(-h)\vert_C \cong \cO_C^{\oplus 2}$, 
hence~\eqref{eq:cux} defines a vector bundle of rank~$2$ such that~$\pi^*\cU_X \cong \cF(-h)$.

The bundle~$\cU_X$ is exceptional because~$\cF$ is (see Corollary~\ref{cor:cf-exc}) and~$\pi^*$ is fully faithful.
Using Lemma~\ref{lem:pf-cf1} and~\eqref{eq:ky} we find~$\pi^*(\rc_1(\cU_X)) = \rc_1(\cF(-h)) = -H = \pi^*K_X$, 
hence~$\rc_1(\cU_X) = K_X$.
Moreover, Lemma~\ref{lem:pf-cf1} also implies the vanishing of~$\rH^\bullet(Y, \cF(-h)) = 0$, 
hence~$\rH^\bullet(X, \cU_X) = 0$,
i.e., $\cU_X$ is acyclic.

Next, we check global generation.
Dualizing~\eqref{eq:def-cf1} and twisting it by~$\cO_Y(h)$, we obtain an exact sequence
\begin{equation}
\label{eq:cuv-ses}
0 \to \cO_Y(h) \to \pi^*\cU_X^\vee \to \cI_{C,Y}(H - h) \to 0.
\end{equation}
We claim that~$\pi_*\cO_Y(h)$ and~$\pi_*\cI_{C,Y}(H - h) = 0$ are pure globally generated sheaves.
Indeed, note that
\begin{equation*}
\pi_*\cO_Y(h) \cong \hat\pi_*\cO_{\hY}(h)
\qquad\text{and}\qquad 
\pi_*\cI_{C,Y}(H - h) \cong \hat\pi_*\cO_{\hY}(H - h - E),
\end{equation*}
where~$\hY \coloneqq \Bl_C(Y) \cong \Bl_{x_0}(X) \xrightarrow{\ \hat\pi\ } X$ is the blowup of the node~$x_0 = \pi(C) \in X$
and~$E \subset \hY$ is its exceptional divisor,
hence~$E \cong \P^1 \times \P^1$ and~$\cO_E(-E) \cong \cO_E(1,1)$.
It follows that~$\cO_E(h) \cong \cO_E(1,0)$ and~$\cO_E(H - h - E) \cong \cO_E(0,1)$,
hence~\cite[Lemma~6.3]{KS22} implies that
\begin{equation*}
\bR^{>0}\pi_*\cO_Y(h) = 
\bR^{>0}\hat\pi_*\cO_\hY(h) = 0
\qquad\text{and}\qquad 
\bR^{>0}\pi_*\cI_{C,Y}(H - h) =
\bR^{>0}\hat\pi_*\cO_\hY(H - h - E) = 0.
\end{equation*}
Therefore, $\pi_*\cO_Y(h)$ and~$\pi_*\cI_{C,Y}(H - h)$ are pure sheaves.

To prove that the first is globally generated, 
consider the  pullback along the morphism~$f \colon Y \to \P^2$ of the (twisted) Koszul complex of~$\P^2$:
\begin{equation*}
0 \to \cO_Y(-2h) \to \cO_Y(-h)^{\oplus 3} \to \cO_Y^{\oplus 3} \to \cO_Y(h) \to 0.
\end{equation*}
Since~$\bR^{>0}\pi_*\cO_Y(-h) = \bR^{>0}\hat\pi_*\cO_\hY(-h) = 0$ (again by~\cite[Lemma~6.3]{KS22}) 
and the dimension of any fiber of~$\pi$ is less than~$2$, 
we conclude that the morphism~$\cO_X^{\oplus 3} \cong \pi_*\cO_Y^{\oplus 3} \to \pi_*\cO_Y(h)$
is surjective, hence~$\pi_*\cO_Y(h)$ is globally generated.

Similarly, using the fact that~$h_+ \coloneqq H - h - E$ is base point free on~$\hY$ 
(indeed, by~\cite[Proposition~3.3]{KP23} it is isomorphic to the pullback of the ample generator of the Picard group
of a quadric~$Q^3$, or~$\P^3$, or~$\P^2$)
and~$\bR^{>0}\hat\pi_*\cO_Y(-h_+) = 0$ by~\cite[Lemma~6.3]{KS22}, 
we conclude that the sheaf~$\hat\pi_*\cO_Y(h_+) \cong \pi_*\cI_{C,Y}(H - h)$ is also globally generated.

Now, pushing forward~\eqref{eq:cuv-ses} 
and using global generation of~$\cO_Y(h)$ and~$\cI_{C,Y}(H - h)$ proved above 
together with the cohomology vanishing~$\rH^1(Y, \cO_Y(h)) = \rH^1(\P^2, \cO_{\P^2}(1)) = 0$,
we see that~$\cU_X^\vee$ is globally generated.
Since~$\rH^0(Y,\cO_Y(h)) \cong \kk^3$ 
and~$\rH^0(Y, \cI_{C,Y}(H - h)) \cong \rH^0(\P^2, \cE^\vee(-1)) \cong \kk^{\oplus(k+2)}$ by~\eqref{eq:cev},
we conclude that~$\rH^0(X, \cU_X^\vee)  \cong \kk^{\oplus(k+5)}$.

Finally, we prove $(-K_X)$-stability of~$\cU_X^\vee$.
By the argument of~\cite[Lemma~2.12]{KS23} it is enough to show that 
there are no nontrivial morphisms from~$\pi^*\cU_X^\vee$ to movable line bundles 
of smaller slope with respect to~$\pi^*(-K_X) = H$.
But the movable cone of~$Y$ is generated by the classes~$h$ and~$H - h$ by~\cite[Lemma~3.2]{KP23}, 
and it is easy to compute that 
\begin{equation*}
H^2 \cdot h =  k + 6
\qquad\text{and}\qquad 
H^2 \cdot (H - h) = (4k + 10) - (k + 6) = 3k + 4.
\end{equation*}
For~$k \in \{1,2,3\}$ these pairs of integers are~$(7,7)$, $(8,10)$, and~$(9,13)$, respectively.
Since~$\pi^*\cU_X^\vee$ is an extension~\eqref{eq:cuv-ses}, 
it follows easily that the only way to destabilize~$\pi^*\cU_X^\vee$ is by having a morphism from it to~$\cO_Y(h)$.
But such a morphism would split the sequence~\eqref{eq:cuv-ses}, 
in contradiction to its definition.
\end{proof}

\begin{remark}
\label{rem:cb-mb}
The properties of the bundle~$\cU_X$ proved in Proposition~\ref{prop:cf1}
mean that it is {\sf a Mukai bundle} on~$X$ in the sense of~\cite[Definition~1.2]{KS23}, see also~\cite[Definition~5.1]{BKM}.
\end{remark}

In the next theorem we construct a categorical absorption for nonfactorial 1-nodal Fano threefolds 
of type~\typeo{12nb}, \typeo{10na}, and~\typeo{8nb},
using the terminology and techniques developed in~\cite{KS22}.

\begin{theorem}
\label{thm:absorption}
Let~$X$ be a nonfactorial $1$-nodal Fano threefold of type~\typeo{12nb}, \typeo{10na}, or~\typeo{8nb}, 
and let~$\pi \colon Y \to X$ be a small resolution of singularities 
that has a structure of a conic bundle~$f \colon Y \to \P^2$. 
There is a semiorthogonal decomposition
\begin{equation}
\label{eq:dbx-absorption}
\Db(X) = \langle \rP_X, \cA_X, \cU_X, \cO_X \rangle,
\end{equation}
where~$\rP_X$ is a $\Pinfty{2}$-object providing a universal deformation absorption of singularities of~$X$,
$\cU_X$ is the Mukai bundle defined in~\eqref{eq:cux},
and~$\cA_X \subset \Db(X)$ is a smooth and proper admissible subcategory such that
\begin{equation}
\label{eq:cax}
\cA_X \simeq 
\begin{cases}
\Db(\Qu_3), & \text{for type~\typeo{12nb},}\\
\Db(\Gamma_2), & \text{for type~\typeo{10na},}\\
\cB_{\bar{Y}}, & \text{for type~\typeo{8nb},}
\end{cases}
\end{equation}
where recall that~$\Qu_3$ is the $3$-Kronecker quiver with two vertices and three arrows,
$\Gamma_2$ is a curve of genus~$2$, 
$\bar{Y}$ is a smooth cubic threefold, 
and~$\cB_{\bar{Y}}$ is the component of~$\Db(\bar{Y})$ defined by~\eqref{eq:cby}.
\end{theorem}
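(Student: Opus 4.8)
The plan is to derive~\eqref{eq:dbx-absorption} from the categorical absorption machinery of~\cite{KS22} applied to the Mukai bundle~$\cU_X$ of Proposition~\ref{prop:cf1}, and then to identify the smooth and proper piece~$\cA_X$ with the category~$\cF^\perp$ computed in Propositions~\ref{prop:cby-12}--\ref{prop:cby-8}. First I would observe that~$(\cU_X,\cO_X)$ is an exceptional pair in~$\Db(X)$: both are exceptional by Proposition~\ref{prop:cf1}, and the semiorthogonality~$\rH^\bullet(X,\cU_X)=0$ is precisely the acyclicity stated there. This gives a semiorthogonal decomposition~$\Db(X)=\langle \cD_X,\cU_X,\cO_X\rangle$ with~$\cD_X\coloneqq\langle \cU_X,\cO_X\rangle^\perp$, so that the whole content of the theorem is the description~$\cD_X=\langle \rP_X,\cA_X\rangle$ with~$\rP_X$ a~$\Pinfty{2}$-object and~$\cA_X\simeq\cF^\perp$.

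To obtain this description I would pass to the small resolution~$\pi\colon Y\to X$, whose exceptional curve~$C\cong\P^1$ satisfies~$\cN_{C/Y}\cong\cO_C(-1)^{\oplus 2}$. By Proposition~\ref{prop:cf1} we have~$\pi^*\cO_X=\cO_Y$ and~$\pi^*\cU_X\cong\cF(-h)$; since~$\pi^*$ is fully faithful on perfect complexes and~$\pi_*\cO_Y=\cO_X$, it embeds the perfect part of~$\cD_X$ into~$\langle \cF(-h),\cO_Y\rangle^\perp\subset\Db(Y)$, and moreover~$\cO_C(-1)$ lies in~$\langle \cF(-h),\cO_Y\rangle^\perp$ since~$\pi_*\cO_C(-1)=0$ and~$\cF(-h)\vert_C\cong\cO_C^{\oplus 2}$. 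Next I would compute~$\langle \cF(-h),\cO_Y\rangle^\perp$ using the conic bundle decomposition~\eqref{eq:sod-y-cb}, the identity~$\Ker(Y/\P^2)=\langle \cF^\perp,\cF\rangle$ of~\eqref{eq:cfperp}, and a sequence of mutations --- rewriting~$\cO_Y$ and~$\cF(-h)=\pi^*\cU_X$ in terms of the conic bundle exceptional collection and the canonical spinor bundles of Lemma~\ref{lem:cf0-cf1}, much as in the proofs of Propositions~\ref{prop:cby-12}--\ref{prop:cby-8} --- in order to bring~$\Db(Y)$ into the form~$\langle \cQ,\cF^\perp,\cF(-h),\cO_Y\rangle$, where~$\cQ$ is a length-two exceptional collection built from~$\cO_C(-1)$ together with one further object encoding the small-contraction data (the extra divisor class~$h$). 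Then~$\langle \cF(-h),\cO_Y\rangle^\perp=\langle \cQ,\cF^\perp\rangle$, and~$\cQ$ will be exactly the avatar on the resolution of the singularity-absorbing object on~$X$, so that the absorption theorem of~\cite{KS22} converts this picture into~$\Db(X)=\langle \rP_X,\cF^\perp,\cU_X,\cO_X\rangle$ with~$\rP_X$ a~$\Pinfty{2}$-object and~$\cA_X\simeq\cF^\perp$ a smooth and proper admissible subcategory. Invoking Propositions~\ref{prop:cby-12}, \ref{prop:cby-10}, and~\ref{prop:cby-8} will then give~\eqref{eq:cax} and, in particular, the smoothness and properness of~$\cA_X$.

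The universal deformation absorption assertion, and hence Corollary~\ref{cor:def-abs}, would then follow from the general theory of~\cite{KS22,KS23}: a~$\Pinfty{2}$-object automatically provides a universal deformation absorption, so any smoothing~$\cX/B$ of~$X$ over a pointed curve carries a smooth and proper~$B$-linear category~$\cA$ with central fibre~$\cA_o=\cA_X$, and identifying its general fibre~$\cA_b\simeq\langle\cO_{\cX_b},\cU_{\cX_b}\rangle^\perp\subset\Db(\cX_b)$ with~$\Db(\Qu_3)$, $\Db(\Gamma_2)$, or the Kuznetsov component~$\cB_{\bar Y}$ of a cubic threefold would amount to comparing, via the flatness of~$\cA/B$, the computation~$\cA_X\simeq\cF^\perp$ above with the known descriptions of the Kuznetsov components of smooth prime Fano threefolds of genera~$12$, $10$, and~$8$. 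The hard part will be the descent of the second paragraph: one must control~$\pi_*$ and~$\pi^*$ on the relevant (non-perfect, reflexive) sheaves on~$X$, and arrange the mutations so that precisely the two-object collection~$\cQ$ matching~$\rP_X$ is isolated --- the computations with~$\hY=\Bl_C(Y)=\Bl_{x_0}(X)$ and the quadric surface~$E\cong\P^1\times\P^1$ from the proof of Proposition~\ref{prop:cf1} should be the right tool for this.
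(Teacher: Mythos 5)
Your plan follows essentially the same route as the paper: apply the absorption machinery of~\cite{KS22} to the spherical object~$\cO_C(-1)$ together with an adherent line bundle (which turns out to be~$\cO_Y(h-H)$, with~$\rP_X = \pi_*\cO_Y(h-H)$), and then match the resulting decomposition of~$\Db(Y)$ against the conic bundle decomposition~$\langle \cF^\perp, \cF(-h), f^*\Db(\P^2)\rangle$ by mutations, the key computation being that the left mutation of~$\cO_Y(-h)$ through~$\cF(-h)$ is exactly~$\bT_{\cO_C(-1)}(\cO_Y(h-H))$ by virtue of the defining sequence~\eqref{eq:def-cf1} of~$\cF$. This is precisely the paper's argument, so your outline is correct; the only pieces you defer --- pinning down the adherent bundle and carrying out that mutation --- are the computational heart but present no obstruction.
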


\begin{proof}
Recall that the exceptional curve of the contraction~$\pi$ is the curve~$C$ defined by~\eqref{eq:curve-c},
hence the kernel of the pushforward functor~$\pi_* \colon \Db(Y) \to \Db(X)$ is generated by the spherical object~$\cO_C(-1)$, 
see~\cite[Theorem~5.8 and Corollary~5.10]{KS22}.
Moreover, using~\eqref{eq:h-c} we compute
\begin{equation*}
\Ext^\bullet(\cO_{Y}(h - H), \cO_C(-1)) \cong 
\rH^\bullet(C, \cO_Y(H - h) \otimes \cO_C(-1)) \cong
\rH^\bullet(C, \cO_C(-2)) \cong
\kk[-1],
\end{equation*}
hence the line bundle~$\cO_Y(h - H)$ is adherent to~$\cO_C(-1)$ in the sense of~\cite[Definition~3.9]{KS22}.
Applying~\cite[Theorem~6.17 and Corollary~6.18]{KS22} we obtain semiorthogonal decompositions
\begin{equation}
\label{eq:dby}
\Db(Y) = \langle \cO_Y(h - H), \bT_{\cO_C(-1)}(\cO_Y(h - H)), \pi^*(\cD) \rangle
\qquad\text{and}\qquad 
\Db(X) = \langle \rP_X, \cD \rangle,
\end{equation}
where~$\bT_{\cO_C(-1)}$ is the spherical twist with respect to~$\cO_C(-1)$ 
and~$\rP_X = \pi_*\cO_Y(h - H)$ is a $\P^{\infty,2}$-object 
which provides a universal deformation absorption of singularities for~$X$.
It remains to describe~$\cD$.

First, note that~$\Ext^\bullet(\cO_C(-1), \cO_Y(h - H)) \cong \kk[-2]$ by the above computation and Serre duality,
hence the spherical twist~$\bT_{\cO_C(-1)}(\cO_Y(h - H))$ is defined by the distinguished triangle
\begin{equation}
\label{eq:tco}
\cO_C(-1)[-2] \to \cO_Y(h - H) \to \bT_{\cO_C(-1)}(\cO_Y(h - H)).
\end{equation}
We have~$\cO_C(-1), \cO_Y(h - H) \in \cO_Y^\perp$ 
(the first is obvious and the second follows from~\eqref{eq:ky}), hence~$\cO_X \in \cD$.

Similarly, we have~$\cO_C(-1) \in (\pi^*\cU_X)^\perp$
(because~$\pi_*(\cO_C(-1)) = 0$) and
\begin{equation*}
\Ext^\bullet(\pi^*\cU_X, \cO_Y(h - H)) \cong 
\Ext^{3 - \bullet}(\cO_Y(h), \pi^*\cU_X)^\vee \cong
\rH^\bullet(Y, \pi^*\cU_X(-h))^\vee \cong
\rH^\bullet(Y, \cF(-2h))^\vee = 0,
\end{equation*}
where the first is Serre duality, the second is obvious,
the third follows from the definition of~$\cU_X$ (see Proposition~\ref{prop:cf1}),
and the fourth is proved in Lemma~\ref{lem:pf-cf1}.
Thus, $\cU_X \in \cD$, and since~$\cU_X$ is exceptional and acyclic, 
we obtain~\eqref{eq:dbx-absorption}.

It remains to identify the component~$\cA_X \subset \Db(X)$ defined by~\eqref{eq:dbx-absorption}
(or, equivalently, the subcategory~$\pi^*(\cA_X) \subset \Db(Y)$).
For this we consider decompositions~\eqref{eq:sod-y-cb}, which we rewrite as
\begin{equation*}
\Db(Y) = \langle (\cF(-h))^\perp, \cF(-h), \cO_Y(-h), \cO_Y, \cO_Y(h) \rangle,
\end{equation*}
where~$(\cF(-h))^\perp$ is the orthogonal to~$\cF(-h)$ in~$\Ker(Y/\P^2)$.
Now we apply a sequence of mutations.

First, we mutate~$\cO_Y(h)$ to the far left.
Since~$K_Y = -H$, we obtain
\begin{equation}
\label{eq:dby-2}
\Db(Y) = \langle \cO_Y(h - H), (\cF(-h))^\perp, \cF(-h), \cO_Y(-h), \cO_Y \rangle.
\end{equation}

Next, we mutate~$\cO_Y(-h)$ to the left of~$\cF(-h)$.
Since by Corollary~\ref{cor:cf-exc} we have
\begin{equation*}
\Ext^\bullet(\cF(-h), \cO_Y(-h)) \cong
\rH^\bullet(Y, \cF^\vee) = \kk,
\end{equation*}
the mutation is isomorphic (up to shift) to the cone of the unique nontrivial morphism~\mbox{$\cF(-h) \to \cO_Y(-h)$}.
Then the exact sequence
\begin{equation}
\label{eq:cf1-twisted}
0 \to \cO_Y(h - H) \to \cF(-h) \to \cO_Y(-h) \to \cO_C(-1) \to 0
\end{equation}
obtained from~\eqref{eq:def-cf1} by a twist
identifies the cone of~\mbox{$\cF(-h) \to \cO_Y(-h)$}
with the shifted cone of the morphism~\mbox{$\cO_C(-1)[-2] \to \cO_Y(h - H)$},
and now~\eqref{eq:tco} shows that the result is~$\bT_{\cO_C(-1)}(\cO_Y(h - H))$.
Thus, we obtain
\begin{equation*}
\Db(Y) = \langle \cO_Y(h - H), (\cF(-h))^\perp, \bT_{\cO_C(-1)}(\cO_Y(h - H)), \cF(-h), \cO_Y \rangle.
\end{equation*}
Now we recall that~$\cF(-h) \cong \pi^*\cU_X$ by Proposition~\ref{prop:cf1}, 
and comparing the above decomposition with the definition of~$\cA_X$ in~\eqref{eq:dbx-absorption} and~\eqref{eq:dby},
we obtain an equivalence
\begin{equation*}
\cA_X \simeq (\cF(-h))^\perp,
\end{equation*}
given by the mutation functors with respect to~$\bT_{\cO_C(-1)}(\cO_Y(h - H))$.
Finally, combining this equivalence with the description of~$(\cF(-h))^\perp \simeq \cF^\perp$
given in Propositions~\ref{prop:cby-12}, \ref{prop:cby-10}, and~\ref{prop:cby-8},
we deduce~\eqref{eq:cax}. 
\end{proof}

Applying~\cite[Theorem~1.8]{KS22} and the argument of~\cite[Theorem~3.6]{KS23}, we obtain

\begin{corollary}
\label{cor:def-abs}
If\/~$\cX \to B$ is a smoothing of a nonfactorial~$1$-nodal Fano threefold~$X$ 
of type~\typeo{12nb}, \typeo{10na}, or~\typeo{8nb} over a smooth punctured curve~$(B,o)$
then there is a smooth and proper family of triangulated categories~$\cA$ over~$B$ such that~$\cA_o \simeq \cA_X$
and for each point~$b \ne o$ in~$B$ there is an equivalence~$\cA_b \simeq \cA_{\cX_b}$,
where~$\cA_{\cX_b} = \langle \cU_{\cX_b}, \cO_{\cX_b} \rangle ^\perp \subset \Db(\cX_b)$ 
is the nontrivial component of the derived category 
of the smooth prime Fano threefold~$\cX_b$ of genus~$12$, $10$, or~$8$, respectively.
\end{corollary}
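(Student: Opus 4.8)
The plan is to combine the universal deformation absorption furnished by Theorem~\ref{thm:absorption} with the deformation theory of the Mukai bundle, following the scheme of the proof of~\cite[Theorem~3.6]{KS23}. First I would recall from Theorem~\ref{thm:absorption} the semiorthogonal decomposition
\begin{equation*}
\Db(X) = \langle \rP_X, \cA_X, \cU_X, \cO_X \rangle
\end{equation*}
and set $\cC_X \coloneqq \langle \cA_X, \cU_X, \cO_X \rangle = {}^\perp\rP_X$; this is smooth and proper since each of its three components is. Because $\rP_X$ is, by the statement of Theorem~\ref{thm:absorption}, a $\Pinfty{2}$-object providing a \emph{universal} deformation absorption of the singularity of $X$, \cite[Theorem~1.8]{KS22} applies to the given smoothing $\cX \to B$ and, possibly after shrinking $B$ around $o$, produces a $B$-linear admissible subcategory $\rP \subset \Db(\cX)$ with central fiber $\rP_o \simeq \rP_X$, together with a smooth and proper $B$-linear category $\cC = {}^\perp\rP$ such that $\cC_o \simeq \cC_X$ and $\cC_b \simeq \Db(\cX_b)$ for every $b \neq o$ (the latter because $\cX_b$ is a smooth Fano threefold, so the absorbing object vanishes at $b$).

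Next I would deform the exceptional pair $(\cU_X, \cO_X)$. The structure sheaf obviously extends to $\cO_\cX$. For the Mukai bundle, $\cU_X$ is exceptional by Proposition~\ref{prop:cf1}, hence rigid and unobstructed, so by the deformation-theoretic argument of~\cite[Theorem~3.6]{KS23} it extends, after a further shrinking of $B$, to a relatively exceptional rank-$2$ vector bundle $\cU_\cX$ on $\cX$. Both $\cO_\cX$ and $\cU_\cX$ lie in $\cC$: the vanishings $\Hom_\cX(\cU_\cX, \rP) = 0$ and $\Hom_\cX(\cO_\cX, \rP) = 0$ may be checked on the central fiber, where they follow from the semiorthogonal decomposition of Theorem~\ref{thm:absorption}, and then propagate over $B$ by base change using properness of $\cX/B$ and $B$-linearity of $\rP$. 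For $b \neq o$ the restriction $\cU_{\cX_b} \coloneqq \cU_\cX\vert_{\cX_b}$ is an exceptional rank-$2$ bundle with $\rc_1(\cU_{\cX_b}) = K_{\cX_b}$ and $\rH^\bullet(\cX_b, \cU_{\cX_b}) = 0$, these numerical and cohomological invariants being locally constant in the family; thus $\cU_{\cX_b}$ is a Mukai bundle on the smooth prime Fano threefold $\cX_b$, and by uniqueness of such bundles it is \emph{the} Mukai bundle appearing in the definition of $\cA_{\cX_b}$.

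Finally I would set $\cA \coloneqq {}^\perp\langle \cU_\cX, \cO_\cX \rangle \subset \cC$. Since $(\cU_\cX, \cO_\cX)$ is a relatively exceptional pair, it generates a $B$-linear admissible subcategory of $\cC$, so $\cA$ is a $B$-linear admissible subcategory of the smooth and proper $B$-linear category $\cC$, and is therefore itself smooth and proper over $B$. Restricting to the central fiber and using $\cC_o \simeq \cC_X = \langle \cA_X, \cU_X, \cO_X \rangle$ gives $\cA_o \simeq {}^\perp\langle \cU_X, \cO_X \rangle = \cA_X$; restricting to $b \neq o$ and using $\cC_b \simeq \Db(\cX_b)$ together with the identification of $\cU_{\cX_b}$ above gives $\cA_b \simeq {}^\perp\langle \cU_{\cX_b}, \cO_{\cX_b} \rangle = \cA_{\cX_b}$, as required. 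The main obstacle, and the place where the proof should be written with care, is controlling the family $\cC$ and the bundle $\cU_\cX$ uniformly over $B$: one must ensure that $\cU_X$ deforms to an honest vector bundle on every nearby fiber, that the orthogonality relations and the semiorthogonal decompositions are genuinely $B$-linear rather than merely fiberwise, and that restriction to fibers commutes with passing to the admissible subcategory $\cA$. All of this is precisely what is packaged in~\cite[Theorem~1.8]{KS22} and in the argument of~\cite[Theorem~3.6]{KS23}, so the remaining work is the bookkeeping of assembling these inputs.
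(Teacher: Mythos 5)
Your proposal is correct and follows exactly the route the paper takes: the paper's entire proof is the sentence ``Applying \cite[Theorem~1.8]{KS22} and the argument of \cite[Theorem~3.6]{KS23}, we obtain'' the corollary, and your write-up is a faithful (and more detailed) unpacking of precisely those two inputs --- the universal deformation absorption of the $\Pinfty{2}$-object producing the smooth proper family $\cC$ over $B$, followed by the deformation of the relatively exceptional pair $(\cU_\cX,\cO_\cX)$ and passage to its orthogonal complement. No gaps; the bookkeeping you flag at the end is indeed all that the cited results are invoked to handle.
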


Recall from~\cite[Theorems~4.1, 4.5, 4.7]{K09} that
\begin{equation*}
\cA_{\cX_b} \simeq 
\begin{cases}
\Db(\Qu_3), & \text{if~$\g(\cX_b) = 12$,}\\
\Db(\Gamma_{2,b}), & \text{if~$\g(\cX_b) = 10$,}\\
\cB_{\bar{Y}_b}, & \text{if~$\g(\cX_b) = 8$,}
\end{cases}
\end{equation*}
where~$\Gamma_{2,b}$ and~$\bar{Y}_b$ is the curve of genus~2 and the smooth cubic threefold 
associated with the smooth prime Fano threefold~$\cX_b$ of genus~10 or~8, respectively.
In particular, the family of categories~$\cA$ is isotrivial for type~\typeo{12nb}.
We expect that for types~\typeo{10na} and~\typeo{8nb}
the smoothing~$\cX$ also can be chosen in such a way that~$\cA$ is isotrivial.

\subsection{Conic bundles of type~\typeo{5n}}
\label{ss:5n}

In this final subsection we discuss conic bundles~$f \colon Y \to \P^2$ 
that provide small resolutions of singularities for nonfactorial 1-nodal Fano threefolds~$X$ of type~\typeo{5n}
in the notation of~\cite{KP23}.
It follows from~\cite[Remark~6.6]{KP23} that the corresponding quadratic forms 
can be written as~$q \colon \cL \to \Sym^2\cE^\vee$, where
\begin{equation}
\label{eq:cev-5n}
\cL \cong \cO_{\P^2}(-1)
\qquad\text{and}\qquad 
\cE \cong \cO_{\P^2} \oplus \cO_{\P^2}(-1) \oplus \cO_{\P^2}(-1).
\end{equation}
In particular, the section of~$\P_{\P^2}(\cE) \to \P^2$ corresponding to the summand~$\cO_{\P^2} \subset \cE$
intersects~$Y$ along a curve~$C \subset Y$ such that~$f\vert_C$ is an isomorphism~$C \to L$ onto a line~$L \subset \P^2$.
Moreover, \eqref{eq:ky} and~\eqref{eq:h-c} still hold, 
the curve~$C$ is the exceptional locus of the small contraction~$\pi \colon Y \to X$,
and~\eqref{eq:cn-c-y} holds.

The discriminant divisor of the conic bundle~$Y/\P^2$ has degree~$7$, 
and it contains the line~$L$ if and only if~$X$ has a 1-dimensional family of anticanonical lines through the node;
this happens in the situation described in~\cite[Remark~1.5]{KP23}.

For conic bundles of type~\typeo{5n} the construction of Lemma~\ref{lem:pf-cf1} 
also produces an abstract spinor bundle~$\cF$.
However, the computation of Proposition~\ref{prop:good-cb0} shows that in this case
\begin{equation*}
f_*\cEnd^0(\cF) \cong \cO_{\P^2}(-3) \oplus \cO_{\P^2}(-2) \oplus \cO_{\P^2}(-2),
\end{equation*}
hence~$\Ext^\bullet(\cF, \cF) \cong \kk \oplus \kk[-2]$, and therefore~$\cF$ is not exceptional.
It also follows that if~$q_\cF \colon \cL_\cF \to \Sym^2\cE_\cF^\vee$ 
is the quadratic form of the spinor modification~$Y_\cF/\P^2$ then
\begin{equation*}
\cL_\cF \cong \cO_{\P^2}(-1)
\qquad\text{and}\qquad 
\cE_\cF \cong \cO_{\P^2} \oplus \cO_{\P^2}(-1) \oplus \cO_{\P^2}(-1).
\end{equation*}
In particular, $q_\cF$ has the same form as~$q$.
In fact, the quadratic form~$q_\cF$ coincides with~$q$.

\begin{lemma}
If\/~$Y/\P^2$ is a conic bundle of type~\typeo{5n},
the abstract spinor bundle~$\cF$ defined in Lemma~\textup{\ref{lem:pf-cf1}}
coincides with the canonical spinor bundle~$\cF^{-1}$ defined in~\eqref{eq:csb-1}. 
In particular, in this case~$Y_\cF \cong Y$.
\end{lemma}

\begin{proof}
The description of the curve~$C$ given above implies that~$C$ coincides with the intersection of the zero loci
of the two global sections of the line bundle~$\cO_Y(H-h)$.
Therefore, the ideal sheaf~$\cI_{C,Y}$ has a Koszul resolution
\begin{equation*}
0 \to \cO_Y(2h - 2H) \to \cO_Y(h - H)^{\oplus 2} \to \cI_{C,Y} \to 0.
\end{equation*}
It is easy to check that~\eqref{eq:def-cf1} is obtained from this by a pushout,
hence there is an exact sequence
\begin{equation*}
0 \to \cO_Y(2h - 2H) \to \cO_Y(h - H)^{\oplus 2} \oplus \cO_Y(2h - H) \to \cF \to 0.
\end{equation*}
Note that the middle term of this exact sequence can be rewritten as~$f^*\cE(2h - H)$.
Therefore, dualizing and twisting this sequence by~$\cO_Y(2h - H)$, 
and using an isomorphism~$\cF^\vee(2h - H) \cong \cF$ that follows from~$\rc_1(\cF) = 2h - H$,
we obtain an exact sequence
\begin{equation*}
0 \to \cF \to f^*\cE^\vee \to \cO_Y(H) \to 0.
\end{equation*}
Since~$\cF$ is an abstract spinor bundle, comparing this sequence with~\eqref{eq:csb-1} 
and using the uniqueness claim of Lemma~\ref{lem:cf0-cf1}, 
we conclude that~$\cF \cong \cF^{-1}$, as required.
\end{proof}

Furthermore, the argument of Proposition~\ref{prop:cf1} shows that~$\cU_X \coloneqq \pi_*(\cF(-h))$
is still an acyclic vector bundle of rank~2 on~$X$ with~$\rc_1(\cU_X) = K_X$,
however it is neither exceptional (because~$\cF$ is not), 
nor $(-K_X)$-stable (it is destabilized by the sequence~\eqref{eq:cuv-ses}).
On the other hand, $\cU_X^\vee$ is still globally generated 
and induces an embedding
\begin{equation*}
X \hookrightarrow \Cone(\P^1 \times \P^2) \subset \Gr(2,5)
\end{equation*}
onto a Weil divisor in a codimension~2 Schubert subvariety of~$\Gr(2,5)$.

Moreover, the argument of Theorem~\ref{thm:absorption} shows that there is a semiorthogonal decomposition
\begin{equation*}
\Db(X) = \langle \rP_X, \cA_X, \cO_X \rangle,
\end{equation*}
where~$\rP_X$ is a $\Pinfty{2}$-object providing a universal deformation absorption of~$X$
and~$\cA_X \subset \Db(X)$ is a smooth and proper admissible subcategory.
The relation of~$\cA_X$ to the kernel category~$\Ker(Y/\P^2)$ of the original conic bundle is not that clear.
The semiorthogonal decompositions
% \begin{equation}
\begin{alignat*}{2}
\Db(Y) &= \langle \cO_Y(h - H),\quad  \bT_{\cO_C(-1)}(\cO_Y(h - H)), \pi^*(\cA_X), \quad & \cO_Y \rangle,
\\
\Db(Y) &= \langle \cO_Y(h - H),\quad  \Ker(Y/\P^2), \cO_Y(-h), & \cO_Y \rangle\hphantom{,}
\end{alignat*}
% \end{equation} 
obtained similarly to~\eqref{eq:dby} and~\eqref{eq:dby-2} imply that~$\cA_X$ and~$\Ker(Y/\P^2)$
are~\emph{Krull--Schmidt partners} in the sense of~\cite{O16-KS}.
However, it is not clear if they are equivalent or not.

Finally, the analogue of Corollary~\ref{cor:def-abs} shows that the category~$\cA_X$
deforms into the component~$\cA_{\cX_b}$ of the derived category of a smooth prime Fano threefold~$\cX_b$ of genus~5.
The latter threefold is isomorphic to the intersection of three quadrics in~$\P^6$,
which gives an equivalence
\begin{equation*}
\cA_{\cX_b} \simeq \Ker(\cQ_b/\P^2),
\end{equation*}
where~$\cQ_b \to \P^2$ is a conic bundle 
obtained from the family of 5-dimensional quadrics in~$\P^6$ through~$X$
by the hyperbolic reduction with respect to a line in~$X$ 
(cf.\ the construction in~\cite[\S3.2]{KS18}).
The conic bundles~$Y \to \P^2$ and~$\cQ_b \to \P^2$ both have discriminant curves of degree~7,
however, while the latter conic bundle is produced from a \emph{nondegenerate} (even) theta-characteristic on such a curve, 
the former corresponds to a \emph{degenerate even} theta-characteristic.

\end{document}